\def\R{\mathbb R}
\def\de{\delta}
\def\ep{\epsilon}
\def\na{\nabla}
\def\Om{\Omega}  
\def\De{\Delta}      
\def\La{\Lambda}
\def\cal{\mathcal}
\def\wq{\infty}
\def\pa{\partial}
\newcommand{\D}{{\rm d}}
\newcommand{\medint}{-\kern -,375cm\int}         
\newcommand{\medintinrigo}{-\kern -,315cm\int}
\newcommand{\wto}{\rightharpoonup}                
\numberwithin{equation}{section}
\newtheorem{theorem}{Theorem}[section]
\newtheorem*{theorem*}{Theorem}  
\newtheorem*{conclusion*}{Conclusin}
\newtheorem{corollary}[theorem]{Corollary}
\newtheorem*{corollary*}{Corollary}
\newtheorem{definition}[theorem]{Definition}
\newtheorem{lemma}[theorem]{Lemma}
\newtheorem*{lemma*}{Lemma}
\newtheorem*{notation*}{Notation}
\newtheorem{proposition}[theorem]{Proposition}
\newtheorem*{proposition*}{Proposition}
\newtheorem{remark}[theorem]{Remark}
\newtheorem*{remark*}{Remark}
\newtheorem*{example*}{Example}                
\theoremstyle{definition}
\begin{document}
	\title[]{Quantitative regularity for minimizing intrinsic fractional harmonic maps}
	
	\author[Y.-Y. Wang, C.-L. Xiang, G.-F. Zheng]{Yu-Ying Wang, Chang-Lin Xiang, Gao-Feng Zheng$^\ast$}
	
\address[Yu-Ying Wang]{School of Mathematics and Statistics, Hubei Key Laboratory of Mathematical Sciences, Central China Normal University, Wuhan 430079,  P. R.  China}
	\email{yuyingwang@mails.ccnu.edu.cn}

	\address[Chang-Lin Xiang]{Three Gorges Mathematical Research Center, China Three Gorges University, 443002, Yichang, People's Republic of China}
	\email{changlin.xiang@ctgu.edu.cn}
	
	\address[Gao-Feng Zheng]{School of Mathematics and Statistics, Key Lab NAA-MOE, Central China Normal University, Wuhan 430079,  P. R.  China}
	\email{gfzheng@ccnu.edu.cn}
	
	\thanks{*: Corresponding author: Gao-Feng Zheng}

	\begin{abstract}
In this note, we study compactness and regularity theory of minimizing intrinsic fractional harmonic mappings introduced by Moser \cite{Moser-11} and  Roberts \cite{Roberts-18-CV}. Based on the partial regularity theory of Moser and Roberts, we first use the modified Luckhaus lemma of Roberts \cite{Roberts-18-CV}  to deduce compactness of these mappings, and then develop volume estimates of  singular sets by the quantitative stratification theory of Cheeger and Naber \cite{Cheeger-Naber-2013-CPAM}. Combining these two results lead to a global regularity estimates which, in turn, allow us to obtain an improvement of the dimension estimate of singular sets.
	\end{abstract}
	
	\maketitle
	
	{\small
		\keywords {\noindent {\bf Keywords:}  intrinsic $s$-harmonic maps; regularity theory; quantitative symmetry; singular set.}
		\smallskip
		\newline
		\subjclass{\noindent {\bf 2020 Mathematics Subject Classification:} 53C43, 35J48}
		\tableofcontents}
	\bigskip
	
	\section{Introduction and main results}
	
	\subsection{Background}  Fractional harmonic mappings have been increasingly studied in recent years, due to their deep connection with various geometrical or physical  problems, see for instance the connection with fractional Ginzburg-Landau equations given by Millot and Sire \cite{Millot-Sire-15}. In the initial works \cite{DaLio-13-CV, DaLio-Riviere-11-Adv, DaLio-Riviere-11-APDE}, Da Lio and Rivi\`{e}re studied odd order harmonic mappings on  odd dimensional Euclidean spaces, which are critical points of the functional $$L(u)=\int_{\R^k}|(-\De)^{k/4}u |^2dx$$
	 for $u\in \dot{H}^{k/2}(\R^{k}, N)$, with $k$ being an odd integer. A generalized fractional harmonic mapping on bounded domains was also introduced recently by several authors, see e.g. Millot, Pegon and Schikorra \cite{Millot-Pegon-2020-CV, Millot-Pegon-Schikorra-2021-ARMA} and the references therin, which are defined as critical points of the functional
	\[u\mapsto \iint_{\R^n\times\R^n\backslash \Om^c\times\Om^c}\frac{|u(x)-u(y)|^2}{|x-y|^{n+2s}}dxdy\]
	over some fractional Sobolev type spaces on $\Om\subset\R^n$ for $0<s<1$.
	 These researches not only generalize the classical harmonic map theory but also provide new perspectives for studying nonlocal geometric phenomena.
	
	  It was noted (see e.g. Moser \cite{Moser-11}) that, the above definitions on fractional harmonic mappings  have the disadvantage that they depend on the embedding of the target manifold in Euclidean spaces, by the famous embedding theorem of Nash \cite{Nash-1956}. From a geometrical point of view, it is natural to introduce an ``intrinsic" type fractional harmonic mapping in the sense that it does not depend on the embedding of target manifold, which will then be a bridge connecting manifold geometry with nonlocal analysis. This idea is also natural in view of the study on extrinsic and intrinsic biharmonic mappings, see Moser \cite{Moser-2008-CPDE}.
		
	Under such a consideration, Moser \cite{Moser-11} was  the first to introduce  intrinsic 1/2-harmonic mapping, and then Roberts \cite{Roberts-18-CV}  extended this concept to intrinsic $s$-harmonic mappings for $0<s<1$.
 Furthermore, they established a rich partial H\"older regularity theory in a series of papers \cite{Moser-11,Moser-Roberts-2022,Roberts-18-CV}, on both minimizing and stationary intrinsic fractional harmonic mappings. To our knowledge, no more regularity results seems to be known up to now in this respect. For instance, there has no result corresponding to the $L^p$ regularity theory for harmonic maps as that of \cite{Guo-Xiang-Zheng-2022-JMPA,Sharp-Topping-2013-TAMS}.
 To control the size of this paper, we shall not explore these problems in this paper. Instead, based on the partial regularity theory  of Moser  and  Roberts  \cite{Moser-11,Roberts-18-CV}, our aim in this work is to deduce a global $L^p$ regularity theory for minimizing intrinsic fractional harmonic maps, by which we shall also improve the dimension estimate on singular set in \cite{Roberts-18-CV}. We remark that, even for the "extrinsic" fractional harmonic mappings mentioned above,  quite few is known on global regularity theory as well, see \cite{He-X-Z-25} for some recent progress.

 Now we state precisely our setting and results. To be convenient for interested readers, we will follow the notations of Roberts \cite{Roberts-18-CV}. First fix an open subset $\mathcal{O} \subset \partial \mathbb{R}_{+}^{m+1}$ (not necessarily the entire boundary $\partial \mathbb{R}_{+}^{m+1}$), for which there exists a bounded    trace operator
\[
\left.T\right|_{\mathcal{O}}: \dot{W}_a^{1,2}\left(\mathbb{R}_{+}^{m+1}; \mathbb{R}^n\right) \rightarrow L^p\left(\mathcal{O}; \mathbb{R}^n\right)
\]
for some $p >1$; for more details see Section \ref{sec: Preliminaries}.
Next fix a smooth closed Riemannian manifold $N$ which is isometrically embedded into $\R^n$ for some $n$. Hereafter, we always assume  the fractional order
$$s \in (0,1)$$ and denote
$$a = 1 - 2s.$$ For each map $u:\mathcal{O}\to N$ which belongs to the image of $T|_{\mathcal{O}}$,  consider the functional
\[
I^a(u,\mathcal{O}) = \inf \left\{ E^a(v) : v \in \dot{W}_a^{1,2}\big( \mathbb{R}_{+}^{m+1}; N \big),\; \left.T\right|_{\mathcal{O}} v = u \right\},
\]
where
\[
E^a(v) := \frac{1}{2} \int_{\mathbb{R}_{+}^{m+1}} z^a |\nabla v|^2  \D \mathbf{x},
\]
and
\[
\dot{W}_a^{1,2}\big( \mathbb{R}_{+}^{m+1}; N \big) := \left\{ v \in \dot{W}_a^{1,2}\big( \mathbb{R}_{+}^{m+1}; \mathbb{R}^n \big) : v(\mathbf{x}) \in N \text{ for a.e. } \mathbf{x} \in \mathbb{R}_{+}^{m+1} \right\}
\]
is a homogeneous Sobolev space equipped with the norm given by the square root of $E^a(v)$, and we write \(\mathbf{x} = (x, z)\) with \(x \in \mathbb{R}^m\), \(z \in \mathbb{R}_+\).

If we take \(\mathcal{O} = \partial \mathbb{R}_{+}^{m+1}\) and replace the target manifold \(N\) by \(\mathbb{R}\), then \(I^a\) reduces to the energy studied by Caffarelli and Silvestre \cite{Caffarelli-Silvestre-2007}. Moreover, The advantage of the functional \(I^a\) is that it is independent of the choice of isometric embedding of \(N\) into Euclidean spaces, since \(E^a\) is invariant under isometric embedding of \(N\). This is why we call the critical points of $I^a$ intrinsic fractional harmonic maps. Due to its typicality, in this work we will focus on mimimizing intrinsic fractional harmonic maps, which are defined as follows.
	

\begin{definition}\label{def:u minimising}(\cite[Definition 3.5]{Roberts-18-CV})
	We say that \(u \in \left.T\right|_{\mathcal{O}}\left( \dot{W}_a^{1,2}(\mathbb{R}_{+}^{m+1}; N) \right)\) is a \emph{(locally) minimizing intrinsic \(s\)-harmonic map}, or a \emph{local minimiser of \(I^a\)}, if for every compact set \(K \subset \mathcal{O}\) and every \(\tilde{u} \in \left.T\right|_{\mathcal{O}}\left( \dot{W}_a^{1,2}(\mathbb{R}_{+}^{m+1}; N) \right)\) such that \(u|_{\mathcal{O} \setminus K} = \tilde{u}|_{\mathcal{O} \setminus K}\), we have
	\[
	I^a(u, \mathcal{O}) \leq I^a(\tilde{u}, \mathcal{O}).
	\]
\end{definition}



For the definition of  general weak or stationary intrinsic $s$-harmonic maps, see also Moser \cite{Moser-11} and Roberts \cite{Roberts-18-CV}. It is observed (\cite{Moser-11,Roberts-18-CV}) that intrinsic \(s\)-harmonic maps arise as the boundary values of free boundary harmonic maps from \(\mathbb{R}_{+}^{m+1}\) to \(N\). Such maps may develop singularities in \(\mathcal{O}\), and full regularity cannot be expected in general. Indeed, Roberts  \cite{Roberts-18-CV} established the following fundamental regularity theory for minimizing intrinsic \(s\)-harmonic maps.

\begin{theorem}[\cite{Roberts-18-CV},Theorem 1.1]\label{thm:1.1}
Let \(\mathcal{O} \subset \partial \mathbb{R}_{+}^{m+1}\) be given as above. Suppose \(m \geq 3\) and \(s \in (0,1)\), or \(m = 2\) and \(s \in (0, 2/3)\). If \(u: \mathcal{O} \rightarrow N\) is an intrinsic \(s\)-harmonic map that locally minimizes \(I^a\) in \(\mathcal{O}\), then \(u\) is smooth outside a relatively closed set \(\Sigma \subset \mathcal{O}\) with $\cal{H}^{m-2s}(\Sigma)=0$, where $\cal{H}^{m-2s}$ denotes  the \((m - 2s)\)-dimensional Hausdorff measure.
\end{theorem}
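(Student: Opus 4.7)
The plan is to adapt the Schoen--Uhlenbeck partial regularity scheme for minimizing harmonic maps to the degenerate elliptic setting of the $a$-weighted energy $E^a$. First I would replace the intrinsic problem on $\mathcal{O}$ by the equivalent free-boundary problem on the upper half-space: if $u$ locally minimizes $I^a$, then any $E^a$-minimizing extension $v \in \dot{W}_a^{1,2}(\mathbb{R}_+^{m+1}; N)$ of $u$ minimizes $E^a$ under compactly supported perturbations in a tubular neighborhood of $\mathcal{O}$. The associated Euler--Lagrange system reads $\divergence(z^a \nabla v) \perp T_v N$ in $\mathbb{R}_+^{m+1}$ together with a natural Neumann-type condition on $\mathcal{O}$. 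Testing minimality against radial inner variations at $\mathbf{x}_0 \in \mathcal{O}$ yields the monotonicity of the scale-invariant density
\[
\Theta_r^a(v, \mathbf{x}_0) := r^{2s-m} \int_{B_r^+(\mathbf{x}_0)} z^a |\nabla v|^2 \, \D \mathbf{x},
\]
so that $\Theta^a(v, \mathbf{x}_0) := \lim_{r \to 0^+} \Theta_r^a(v, \mathbf{x}_0)$ exists and is upper semicontinuous in $\mathbf{x}_0$.

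Next I would prove the $\epsilon$-regularity theorem: there exists $\epsilon_0 = \epsilon_0(m, s, N) > 0$ such that $\Theta_r^a(v, \mathbf{x}_0) < \epsilon_0$ for some $r > 0$ forces the trace $u$ to be smooth on a definite neighborhood of $\mathbf{x}_0$ in $\mathcal{O}$. The core tool is the modified Luckhaus lemma of Roberts \cite{Roberts-18-CV}, which on any annular shell $B_r^+ \setminus B_{r/2}^+$ produces a competitor $w \in \dot{W}_a^{1,2}(\mathbb{R}_+^{m+1}; N)$ interpolating between $v$ and a harmonic replacement while respecting the target constraint via the nearest-point retraction onto $N$. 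Comparing $E^a(v)$ to $E^a(w)$ under the small-energy hypothesis yields a one-step decay $\Theta_{\theta r}^a(v, \mathbf{x}_0) \le \tfrac{1}{2}\,\Theta_r^a(v, \mathbf{x}_0)$ for some fixed $\theta \in (0,1)$; iteration produces Campanato-type growth, hence H\"older continuity of $u$, and a standard bootstrap of the weighted equation upgrades this to $C^\infty$ regularity.

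Finally I would identify the singular set with $\Sigma := \{\mathbf{x} \in \mathcal{O} : \Theta^a(v, \mathbf{x}) \ge \epsilon_0\}$, which is relatively closed in $\mathcal{O}$ by upper semicontinuity. Since by monotonicity $\Theta_r^a(v, \mathbf{x}) \ge \epsilon_0$ for all small $r$ and all $\mathbf{x} \in \Sigma$, a Vitali covering argument controls $\mathcal{H}^{m-2s}(\Sigma \cap K)$ by a multiple of the weighted energy on a neighborhood of $K$ for every compact $K \subset \mathcal{O}$; a standard differentiation-of-measures argument (showing that at $\mathcal{H}^{m-2s}$-a.e.\ point the density of $\Sigma$ cannot saturate) then upgrades the finiteness to $\mathcal{H}^{m-2s}(\Sigma)=0$. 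The main obstacle throughout is the degeneracy/singularity of the $A_2$-weight $z^a$ for $s\neq 1/2$: classical tools such as Moser iteration, Campanato characterizations of H\"older spaces, and the original Luckhaus truncation in $W^{1,2}$ must all be reworked in weighted Sobolev spaces, and the dimensional restrictions $m \ge 3$ (or $m = 2$ with $s < 2/3$) enter precisely at the Luckhaus step, where one needs enough subcriticality in the weighted trace embedding to guarantee a genuine energy drop in the comparison.
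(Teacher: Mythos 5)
This statement is quoted verbatim from Roberts \cite{Roberts-18-CV} and the paper offers no proof of its own beyond remarking that it follows from the $\varepsilon$-regularity Theorem~\ref{thm: partial regularity} and a standard covering argument; your outline reconstructs exactly that Schoen--Uhlenbeck/Luckhaus scheme (inner-variation monotonicity, Luckhaus-glued comparison maps with nearest-point retraction, $\varepsilon$-regularity and bootstrap, Vitali covering plus the refinement $\mu(\Sigma)=0$ to pass from local finiteness to vanishing of $\mathcal{H}^{m-2s}$), so it matches the intended proof. The only compression worth flagging is that the decay $\boldsymbol{\Theta}_s(v,\mathbf{x}_0,\theta r)\le\tfrac12\boldsymbol{\Theta}_s(v,\mathbf{x}_0,r)$ does not come from a single energy comparison with the glued competitor: that comparison yields a Caccioppoli/reverse-Poincar\'e inequality, and the decay is then obtained by a compactness (harmonic-approximation) argument against solutions of $\divergence(z^a\nabla h)=0$, as in Simon's treatment and in Roberts' actual proof.
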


This theorem implies that the Hausdorff dimension of \(\Sigma\) satisfies \(\dim_{\mathcal{H}} \Sigma \leq m - 2s\). However, as noted in \cite{Roberts-18-CV}, this bound may not be optimal.  In particular, results of Hardt--Lin \cite{Hardt-Lin-1989} and Duzaar--Steffen \cite{Duzaar-Steffen-1989-1, Duzaar-Steffen-1989-2} show that if a map \(v: \mathbb{R}^{m+1}_{+} \to N\) satisfies \(I^0(u) = E^0(v)\) (corresponding to $s=1/2$), then \(v\) is smooth on \(\mathcal{O}\) except on a relatively closed subset of Hausdorff dimension at most \(m - 2\), with examples in \cite[Page 5]{Duzaar-Steffen-1989-2} showing that $m-2$ is optimal. Roberts \cite[Page 3]{Roberts-18-CV} also expected that his dimension estimates in  Theorem \ref{thm:1.1} could be improved by combining his results and the arguments of Chapter 3  of Simon \cite{Simon-1996}.



	\subsection{Main results}

Since our aim is to derive a global regularity theory for minimizing intrinsic \(s\)-harmonic mappings, the following defined singular set
\begin{equation}\label{def: singular set}
\operatorname{sing}(u) = \{ x \in \Omega : u \text{ is not continuous in any neighborhood of } x \}
\end{equation}
plays a very important role. To study this set, we will use the quantitative differentiation approach introduced by Cheeger and Naber \cite{Cheeger-Naber-2013-CPAM}. From this, a global regularity estimate for minimizing intrinsic \(s\)-harmonic mappings will follow. This method has been successfully applied to a wide range of geometric variational problems, including harmonic mappings, biharmonic mappings, fractional harmonic mappings, varifolds, currents, harmonic map flow and mean curvature flow; see, for example, \cite{Breiner-Lamm-2015, Cheeger-H-N-2013, Cheeger-H-N-2015, Cheeger-Naber-2013-Invent, Cheeger-Naber-2013-CPAM, Naber-V-2018, Naber-V-2020-JEMS, Naber-V-V-2019} and references therein. We remark that, in the celebrated work of Naber and Valtorta \cite{Naber-V-2017}, the authors have developed a sharp quantitative stratification theory to deduce optimal global regularity theory for even stationary harmonic maps. See also  \cite{GJXZ-2024, JZ-2024} for extensions of \cite{Naber-V-2017} to biharmonic mappings.  But in this work, we will not follow this approach as there are still several fundamental theories  (such as the corresponding defect measure theory of Lin \cite{Lin-1999-Annals}) for stationary intrinsic  $s$-harmonic mappings needs to be established. Hence we will only focus on minimizing intrinsic  $s$-harmonic mappings in this work, and leave the study of stationary intrinsic  $s$-harmonic mappings in a forthcoming paper.

To state our results precisely, let us  introduce some necessary notations. For convenience, denote
\begin{equation}\label{H_int}
	H_{\text{int}}^{s}(\mathcal{O}, N) =  T|_{\mathcal{O}} \left( \dot{W}_a^{1,2}(\mathbb{R}_{+}^{m+1}; N) \right),
\end{equation}
and for any given $\Lambda > 0$, define
\begin{equation}\label{H_int_Lambda}
	H_{\Lambda}^{s}(\mathcal{O}, N) = \left\{ u \in H_{\text{int}}^{s}(\mathcal{O}, N): I^a(u, \mathcal{O}) < \Lambda \right\}.
\end{equation}
To avoid confusion with balls in \(\mathbb{R}^{m+1}\), we denote
\[
D_r(x) = \{ y \in \mathbb{R}^m : |y - x| < r \}
\]
as the ball in \(\mathbb{R}^m\) centered at \(x\) with radius \(r\), and write \(D_r = D_r(0)\). Our first result is the following:

\begin{theorem}[Integrability estimates]\label{thm: integrability}
	Let \(\Lambda > 0\). For \(m \geq 3\) let \(s \in [1/2, 1)\), and for \(m = 2\) let \(s \in [1/2, 2/3)\).  Then for every \(1 \leq p < 2\), there exists a constant \(C = C(s, m, \Lambda, p)\) such that, if \(u \in H_{\Lambda}^{s}(D_4, N)\) is a minimizing intrinsic \(s\)-harmonic map, then
	\[
	\int_{D_1} |\nabla u|^p  \D x \leq C \int_{D_1} r_u^{-p}  \D x < C.
	\]
\end{theorem}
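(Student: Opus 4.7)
The plan is to combine the $\epsilon$-regularity theory underlying Theorem \ref{thm:1.1} with a quantitative stratification analysis of the effective singular set, following Cheeger--Naber \cite{Cheeger-Naber-2013-CPAM}. I would first introduce the regularity scale
\[
r_u(x) := \sup\bigl\{\,r\in(0,1] : u\in C^1(D_r(x))\text{ and } r\|\nabla u\|_{L^\infty(D_{r/2}(x))}\le 1\,\bigr\}
\]
for $x\in D_1$, with $r_u(x):=0$ if no such $r$ exists. By Roberts' $\epsilon$-regularity, $r_u(x)>0$ precisely when $x\notin \sing(u)$, and $r_u$ is $1$-Lipschitz on $D_1$. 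Interior Schauder estimates for the degenerate extension problem applied on $D_{r_u(x)/2}(x)$ then deliver the pointwise bound $|\nabla u(x)|\le C\, r_u(x)^{-1}$ for a.e.\ $x\in D_1$, which is the \emph{first} inequality.

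For the \emph{second} inequality, a layer-cake decomposition
\[
\int_{D_1} r_u^{-p}\,\D x = p\int_0^{1} t^{-p-1}\bigl|\{r_u<t\}\cap D_1\bigr|\,\D t + |D_1|
\]
reduces the problem to a Minkowski-type volume estimate of the form
\[
\bigl|\{x\in D_1 : r_u(x)<r\}\bigr|\;\le\; C(s,m,\Lambda,\eta)\,r^{\,2-\eta}\qquad\forall\,r\in(0,1]
\]
for some small $\eta=\eta(s,m,\Lambda)>0$; once such a bound is in hand, the displayed integral is finite precisely for $p<2$, matching the statement.

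To establish the volume estimate I deploy quantitative stratification. The compactness result proved earlier in the paper (via Roberts' modified Luckhaus lemma) ensures that blow-up limits of minimizing intrinsic $s$-harmonic maps with $I^a$-energy bounded by $\Lambda$ are again minimizers, and the almost-monotonicity of the normalized weighted energy from \cite[Section~4]{Roberts-18-CV} passes to the limit, so tangent maps are homogeneous minimizers. One then introduces the quantitative strata $\mathcal{S}^k_{\eta,r}(u)\subset D_1$ of points $x$ for which no scale $\rho\in[r,1]$ is $(k+1,\eta)$-symmetric. A cone-splitting lemma adapted from \cite{Cheeger-Naber-2013-CPAM} to the weight $z^a$, combined with a Vitali-type covering argument, yields
\[
\bigl|\{x\in D_1 : \dist(x,\mathcal{S}^k_{\eta,r}(u))<r\}\bigr|\le C(s,m,\Lambda,\eta)\,r^{\,m-k-\eta}.
\]
A Federer-type dimension reduction, ruling out nontrivial homogeneous minimizing tangent maps invariant under an $(m-1)$-dimensional translation subspace, then gives the inclusion
\[
\{x\in D_1 : r_u(x)<r\} \subset \{x\in D_1 : \dist(x,\mathcal{S}^{m-2}_{\eta,r}(u))<Cr\},
\]
and substituting $k=m-2$ into the previous display produces the required $r^{\,2-\eta}$ bound.

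The \textbf{main obstacle} is precisely this dimension-reduction step, together with the adaptation of the cone-splitting lemma to the anisotropic scaling induced by the weight $z^a$ on the extension side. The key classification I must prove is that every nontrivial one-variable homogeneous minimizing intrinsic $s$-harmonic tangent map into $N$ is constant; this is where the hypothesis $s\in[1/2,1)$ (and $s<2/3$ when $m=2$) is used, since it matches the regime in which the full $\epsilon$-regularity of Theorem \ref{thm:1.1} is available and the classification reduces to a one-dimensional Euler--Lagrange analysis. Once these ingredients are established, the remainder of the argument is the by-now standard Cheeger--Naber template.
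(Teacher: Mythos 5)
Your outline follows the paper's architecture: the first inequality is immediate from the definition of $r_u$, and the second is reduced (you via layer-cake, the paper via the inclusion $\{|\na u|>1/r\}\subset\{r_u(x)<r\}$) to the codimension-two volume bound of Theorem \ref{thm: regularity scale estimate}, which is in turn obtained from the Cheeger--Naber stratification estimate (Theorem \ref{volume estimate}) plus an $\varepsilon$-regularity statement converting $(m-1,\delta)$-symmetry into a lower bound on the regularity scale (Theorem \ref{thm: sym-to-reg}). One small slip: with a single fixed $\eta$ your layer-cake computation only yields finiteness for $p<2-\eta$, not ``precisely for $p<2$''; since the volume bound holds for every $\eta>0$, you must choose $\eta<2-p$, which is why the constant depends on $p$.

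The genuine gap is the step you yourself flag as the main obstacle: the passage from the generic stratum $k=m-1$ to $k=m-2$, i.e.\ the statement that there is no nonconstant boundary $(m-1)$-symmetric homogeneous minimizing tangent map. This is exactly the paper's symmetry self-improvement Lemma \ref{lem: Symmetry self-improvement}, and your proposed justification is misdirected: the restriction $s\in[1/2,1)$ has nothing to do with the availability of $\varepsilon$-regularity, since Theorem \ref{thm: partial regularity} (hence Theorem \ref{thm:1.1}) holds for all $s\in(0,1)$ when $m\ge 3$. The actual mechanism is a weighted-energy scaling argument on the extension: if a $0$-homogeneous map on $\overline{\R^{m+1}_+}$, invariant under an $(m-1)$-dimensional boundary subspace, had nonzero energy $c$ on some ball $B_{1/2}(\mathbf{x})\cap B_2^+$, then by homogeneity the rescaled balls $B_{4^{-j}/2}(\mathbf{x}/4^{j})\cap B_2^+$ each carry energy $c\,4^{-j(m-1+a)}$, while translation invariance provides $\sim c_1 4^{j(m-1)}$ disjoint translated copies inside $B_2^+$; the total energy is then at least $\sum_j c\,c_1\,4^{-ja}$, which diverges precisely because $a=1-2s\le 0$, contradicting the uniform energy bound and forcing the limit map to be constant. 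For $s<1/2$ (i.e.\ $a>0$) the series converges, the argument collapses, and indeed the paper only obtains the codimension-one bound there and leaves the $L^p$ estimate open. Your suggested ``one-dimensional Euler--Lagrange analysis'' of the quotient arc map would have to reproduce this weight-dependent obstruction and is not supplied, so as written the decisive step of the proof is missing; the remaining ingredients (cone splitting, covering, quantitative rigidity) are indeed the standard template and match the paper.
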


Here, \(r_u\) denotes the regularity scale of \(u\), defined as follows. For a function \(f: \mathcal{O} \to \mathbb{R}\), the regularity scale function is defined by
\begin{equation}\label{def: regularity scale function}
	r_f(x) = \max \left\{ 0 \leq r \leq 1 : \sup_{y \in D_r(x)} r |\nabla f(y)| \leq 1 \right\},
\end{equation}
and the set of points with bad regularity scales is given by
\[
\mathcal{B}_r(f) = \{ x \in \mathcal{O} : r_f(x) < r \}.
\]
Theorem~\ref{thm: integrability} follows directly from the volume estimate below.

\begin{theorem}\label{thm: regularity scale estimate}
	Let \(\Lambda > 0\). For \(m \geq 3\) let \(s \in (0,1)\), and for \(m = 2\) let \(s \in (0, 2/3)\).  Then, for every \(\eta > 0\), there exists a constant \(C = C(s, m, \Lambda, \eta)\) such that, if \(u \in H_{\Lambda}^{s}(\mathcal{O}, N)\) is a minimizing intrinsic \(s\)-harmonic map, then
	\[
	\operatorname{Vol} \left( T_r(\mathcal{B}_r(u)) \cap D_1 \right) \leq C r^{1 - \eta}, \quad \forall  \, 0 < r < 1,
	\]
for \(s \in (0,1/2)\),
	and
	\[
	\operatorname{Vol} \left( T_r(\mathcal{B}_r(u)) \cap D_1 \right) \leq C r^{2 - \eta}, \quad \forall  \, 0 < r < 1,
	\]
for \(s \in [1/2,1)\),
	where \(T_r(A)\) denotes the \(r\)-tubular neighborhood of \(A\) in \(\mathbb{R}^m\). Consequently, the Minkowski dimension of \(\operatorname{sing}(u)\) satisfies
	\[
	\begin{aligned}
		\dim_{\mathcal{M}} \operatorname{sing}(u) &\leq m - 1  &&\text{for } s \in (0,1/2), \\
		\dim_{\mathcal{M}} \operatorname{sing}(u) &\leq m - 2  &&\text{for } s \in [1/2,1),
	\end{aligned}
	\]
	due to the inclusion \(\operatorname{sing}(u) \subset \mathcal{B}_r(u)\).
\end{theorem}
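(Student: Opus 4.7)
\smallskip

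\noindent\textbf{Proof plan for Theorem \ref{thm: regularity scale estimate}.} The plan is to run the Cheeger--Naber quantitative stratification machinery \cite{Cheeger-Naber-2013-CPAM} in the setting of minimizing intrinsic $s$-harmonic maps, using as inputs (i) the monotonicity formula for the weighted energy $E^a$ on $\mathbb{R}^{m+1}_+$ (already available from Moser \cite{Moser-11} and Roberts \cite{Roberts-18-CV}), (ii) the compactness for minimizers of $I^a$ under an energy bound (proved earlier in this paper via the modified Luckhaus lemma of \cite{Roberts-18-CV}), and (iii) the classification of homogeneous tangent maps on $\mathbb{R}^{m+1}_+$ as minimizing $0$-homogeneous intrinsic $s$-harmonic maps. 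The first step is to associate to each $x \in \mathcal{O}$ and scale $r>0$ the normalised energy $\theta_r(x) := r^{2s-m} E^a(v; B_r(x))$ of the free boundary harmonic extension $v$, and to define a point to be $(k,\varepsilon,r)$-symmetric if $v$ is $L^2$-close, on the half-ball $B_r^+(x)$ with weight $z^a$, to a map which is $0$-homogeneous and invariant under translations in some $k$-dimensional subspace of $\partial\mathbb{R}^{m+1}_+$. The quantitative stratum $\mathcal{S}^k_{\varepsilon,r}$ is then the set of $x$ that are not $(k+1,\varepsilon,\rho)$-symmetric at any scale $\rho \in [r,1]$.

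The second step is the Cheeger--Naber volume bound, which, granted the monotonicity and compactness above, yields in a standard way
\[
\operatorname{Vol}\bigl(T_r(\mathcal{S}^k_{\varepsilon,r}) \cap D_1\bigr) \leq C(s,m,\Lambda,\varepsilon,\eta)\, r^{m-k-\eta}, \qquad 0<r<1,
\]
for every $\eta>0$. Here one uses the monotonicity to replace a generic energy pinching condition by the quantitative rigidity/cone-splitting statement characteristic of \cite{Cheeger-Naber-2013-CPAM}: if $v$ is almost $(k,\varepsilon)$-symmetric at two nearby points at the same scale with linearly independent centres of homogeneity, then it is almost $(k+1,\varepsilon')$-symmetric at a slightly larger scale. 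Both ingredients transfer verbatim to the weighted setting because $E^a$ is invariant under translations parallel to $\partial\mathbb{R}^{m+1}_+$ and under homogeneous rescalings $v \mapsto v(r\cdot)$.

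The third and decisive step is an $\varepsilon$-regularity statement which shows that the bad regularity-scale set is contained in the appropriate stratum, namely
\[
\mathcal{B}_r(u) \subset \mathcal{S}^{m-1}_{\varepsilon_0,r}\qquad\text{for }s\in(0,1/2),\qquad
\mathcal{B}_r(u) \subset \mathcal{S}^{m-2}_{\varepsilon_0,r}\qquad\text{for }s\in[1/2,1).
\]
For the first inclusion one only needs that $(m,\varepsilon_0)$-symmetry at scale $r$ (i.e.\ closeness to a constant) forces, via the partial regularity of Theorem \ref{thm:1.1} applied to a blow-up, the regularity scale to satisfy $r_u \geq c r$; this is the standard minimizer $\varepsilon$-regularity, and gives the $Cr^{1-\eta}$ bound directly. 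The second inclusion is the point of difficulty: one must rule out the top stratum by showing that any $0$-homogeneous minimizing intrinsic $s$-harmonic map on $\mathbb{R}^m$ which is invariant under an $(m-1)$-dimensional subspace of translations is necessarily constant. Splitting off the $(m-1)$ invariant directions reduces this to a $0$-homogeneous minimizer on $\mathbb{R}$, whose free boundary extension lives on the $2$-dimensional half-plane; since $s\in[1/2,2/3)$ when $m=2$ and $s\in[1/2,1)$ when $m\geq3$, Theorem \ref{thm:1.1} applies in this reduced dimension and forces the map to be smooth on the punctured boundary, hence constant by $0$-homogeneity and continuity.

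The main obstacle I expect is precisely this dimension-reduction / cone-splitting step under the weighted half-space extension. Two issues have to be handled with care: first, the equivalence between an $(m-1,\varepsilon)$-symmetric trace on $\mathcal{O}$ and an $(m-1,\varepsilon)$-symmetric weighted extension on $\mathbb{R}^{m+1}_+$ (one direction follows from trace continuity, the other from the unique $E^a$-minimizing extension), and second, the upgrade from ``approximately $(m-1)$-symmetric'' to ``actually smooth'' for a minimizer, which must be quantitative and compatible with the compactness used in Step 2. Once this gap theorem is established, combining it with the Cheeger--Naber volume bound of Step 2 yields the two stated volume estimates, and the Minkowski dimension bounds on $\operatorname{sing}(u)$ follow from $\operatorname{sing}(u)\subset\mathcal{B}_r(u)$ for all $r>0$.
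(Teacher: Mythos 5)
Your overall architecture is the same as the paper's: quantitative strata for the weighted extension, the Cheeger--Naber volume bound $\operatorname{Vol}(T_r(\mathcal{S}^k_{\eta,r})\cap D_1)\le Cr^{m-k-\eta}$ (the paper's Theorem \ref{volume estimate}), and an $\varepsilon$-regularity step converting $x\in\mathcal{B}_r(u)$ into membership in the stratum $\mathcal{S}^{m-1}$ (all $s$) resp. $\mathcal{S}^{m-2}$ (for $s\ge 1/2$). Steps 1--2 and the inclusion giving the $Cr^{1-\eta}$ bound match the paper (its Lemma \ref{lemma: new epsilon regularity} is proved exactly by the compactness-plus-Theorem \ref{thm: partial regularity} blow-up you describe).

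The genuine gap is in your decisive step for $s\in[1/2,1)$. You propose to rule out nonconstant boundary $(m-1)$-symmetric tangent maps by splitting off the $m-1$ invariant directions and applying Theorem \ref{thm:1.1} ``in the reduced dimension.'' This does not work as stated: after reduction the boundary dimension is $1$, and Theorem \ref{thm:1.1} (as well as the underlying function-space and trace theory, e.g.\ Lemma \ref{lemma:Weighted homogeneous Sobolev spaces}) is only available for $m\ge 2$, with $m=2$ further restricted to $s<2/3$; the conditions on $s$ in Theorem \ref{thm:1.1} are tied to the boundary dimension, so they cannot be quoted for the reduced problem. Moreover, you would first have to prove that the translation-invariant minimizer descends to a minimizer of the reduced two-dimensional weighted problem (a Federer-type dimension reduction not established in the paper), and even then ``smooth on the punctured boundary'' does not by itself exclude a $0$-homogeneous jump at the origin. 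The paper avoids all of this: its Lemma \ref{lem: Symmetry self-improvement} shows directly that a boundary $(m-1)$-symmetric limit extension $v$ with $\int_{B_{1/2}(\mathbf{x})\cap B_2^+}z^a|\nabla v|^2=c>0$ must, by $0$-homogeneity and translation invariance, contain at least $c_1 4^{j(m-1)}$ disjoint balls at scale $4^{-j}$ each carrying energy $c\,4^{-j(m-1+a)}$, so that $\int_{B_2^+}z^a|\nabla v|^2\ge\sum_j c\,c_1 4^{-ja}=\infty$ precisely because $a=1-2s\le 0$, contradicting the uniform energy bound; then Poincar\'e gives constancy, i.e.\ $(m,\epsilon)$-symmetry, and the quantitative statement follows by the compactness Theorem \ref{thm: compactness}. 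You correctly flagged this quantitative upgrade as the main obstacle, but the mechanism you propose to close it would fail; you should replace it by an energy-summation (or other self-contained) argument of this type, which is also exactly where the threshold $s\ge 1/2$ enters.
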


We remark that, due to the different estimates for $s\in (0,1/2)$ and $s\in [1/2,1)$, Theorem \ref{thm: integrability} only holds for $s\ge 1/2$. It is still open whether Theorem \ref{thm: integrability} holds for $0<s<1/2$.  We would also remark that, the Theorem \ref{thm:1.1} of Roberts showed that $\cal{H}^{n-2s}(\Sigma)=0$ and $u$ is continuous outside $\Sigma$. However, it is easy to know that $ \operatorname{sing}(u)\subset \Sigma$, since the set  $\Sigma$ is defined by an upper space extension function of $u$, which is  possibly bigger than the singular set of $u$.  As a result, Theorem \ref{thm: regularity scale estimate} shows that $u$ is continuous outside a set of Hausdorff dimension $m-[2s]-1$. This is a slight improvement of Theorem \ref{thm:1.1}.


It would be interesting to compare our dimension estimate with that of minimizing extrinsic \(s\)-harmonic mappings obtained by Millot et al \cite{Millot-Pegon-Schikorra-2021-ARMA}, where the authors proved that the singular set of sphere-valued minimizing extrinsic \(s\)-harmonic mappings satisfying $\dim_H{\operatorname{sing}(u)}\le m-2$ for all $0<s<1$ and $m\ge 1$.
In our case, as aforementioned, the optimal upper bound for the Hausdorff dimension of the singular set is \(m-2\) for  minimizing  intrinsic 1/2-harmonic maps. Thus, the bound obtained in our results is therefore optimal for $s=1/2$. It remains open whether the bound for \(s \neq 1/2 \) is optimal or not.

To deduce the volume estimates in Theorem~\ref{thm: regularity scale estimate}, the key step is to prove the following volume estimate for the quantitative singular set \(\mathcal{S}_{\eta,r}^k(u)\) (see Definition~\ref{def: qs}).

\begin{theorem}[Volume estimate of singular set]\label{volume estimate}
	Let \(\Lambda > 0\) and \(k \in \{0,1,\dots,m-1\}\). For \(m \geq 3\) let \(s \in (0,1)\), and for \(m = 2\) let \(s \in (0, 2/3)\). Then, for every \(\eta > 0\), there exists a constant \(C = C(m, s, N, \Lambda, \eta) > 0\) such that for all minimizing intrinsic \(s\)-harmonic maps \(u \in H_{\Lambda}^{s}(\mathcal{O}, N)\) and all \(0 < r < 1\),
	\[
	\operatorname{Vol} \left( T_r(\mathcal{S}_{\eta,r}^k(u)) \cap D_1 \right) \leq C r^{m - k - \eta}.
	\]
\end{theorem}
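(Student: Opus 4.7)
The plan is to follow the quantitative stratification framework of Cheeger--Naber \cite{Cheeger-Naber-2013-CPAM}, transferred to the weighted half-space setting arising from the Caffarelli--Silvestre extension. Given a minimizer $u\in H_{\Lambda}^{s}(\mathcal{O},N)$, I would work with the energy-minimizing harmonic extension $v\in\dot{W}_a^{1,2}(\mathbb{R}_{+}^{m+1};N)$ and the normalized density
\[
\Theta^a(v,\mathbf{x},r)=r^{2s-m}\int_{B_r^+(\mathbf{x})}z^a|\nabla v|^2\, \D\mathbf{x},
\]
which is monotone nondecreasing in $r$ about any $\mathbf{x}\in\mathcal{O}$, bounded by a multiple of $\Lambda$, and constant in $r$ exactly when $v$ is $0$-homogeneous about $\mathbf{x}$. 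The quantitative notion of $(\epsilon,k)$-symmetry at $(\mathbf{x},r)$ is then defined by weighted $L^2$-closeness on $B_r^+(\mathbf{x})$ to a homogeneous minimizing extension invariant in $k$ tangential directions, so that $\mathcal{S}_{\eta,r}^{k}(u)$ collects those boundary points at which $v$ fails to be $(\eta,k+1)$-symmetric at every scale in $[r,1]$.

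Two qualitative ingredients then have to be set up. The first is an $\epsilon$-regularity and tangent-map theory: the Moser--Roberts partial regularity underlying Theorem \ref{thm:1.1} shows that small $\Theta^a$ at some scale implies smoothness at all smaller scales, while the compactness of minimizing intrinsic $s$-harmonic maps proved earlier in this paper, via Roberts' modified Luckhaus lemma, guarantees that tangent maps at every boundary point exist and are themselves homogeneous minimizing extensions. The second is the quantitative cone-splitting lemma, derived by a standard compactness/contradiction argument: for every $\epsilon>0$ there is $\delta=\delta(\epsilon,m,s,N,\Lambda)>0$ such that if $v$ is $(\delta,0)$-symmetric at $(\mathbf{x}_0,r)$ and also $(\delta,0)$-symmetric at $k+1$ further boundary points $\mathbf{x}_1,\ldots,\mathbf{x}_{k+1}\in\mathcal{O}\cap B_r^{+}(\mathbf{x}_0)$ that are $\delta$-linearly independent modulo $\mathbf{x}_0$, then $v$ is $(\epsilon,k+1)$-symmetric at $(\mathbf{x}_0,r)$. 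Monotonicity then propagates such almost-symmetry across comparable scales.

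With these ingredients in place, the volume bound is obtained by a combinatorial covering argument essentially identical in structure to \cite[Sections~3-4]{Cheeger-Naber-2013-CPAM}. On each ball $B_\rho(x)\subset D_1$ one runs a dichotomy: either $\mathcal{S}_{\eta,r}^{k}(u)\cap B_\rho(x)$ concentrates in a $\delta\rho$-tubular neighborhood of some affine $k$-plane, in which case this portion is covered by $C\delta^{-k}$ balls of radius $\delta\rho$; or the quantitative cone-splitting lemma produces a point at which $v$ is $(\eta,k+1)$-symmetric at some scale in $[r,\rho]$, contradicting the definition of $\mathcal{S}_{\eta,r}^{k}(u)$ unless the normalized density $\Theta^a$ drops by at least some $\eta_0=\eta_0(\eta,m,s,N)>0$ across a definite range of scales on that ball. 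Since the energy budget $\Lambda$ only admits $C\Lambda/\eta_0$ such drops along any dyadic chain of scales, summing the resulting geometric series over $r_j=2^{-j}\in[r,1]$ yields the Minkowski-type bound
\[
\operatorname{Vol}\bigl(T_r(\mathcal{S}_{\eta,r}^{k}(u))\cap D_1\bigr)\leq C(m,s,N,\Lambda,\eta)\, r^{m-k-\eta}.
\]

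The main obstacle, in my view, is not the combinatorics but the transfer of the full Cheeger--Naber infrastructure to the weighted boundary setting. The weight $z^a$ with $a=1-2s$ is either singular or degenerate, so the monotonicity formula, the rigidity of constant-density points, weighted $L^2$ compactness and semicontinuity of $\Theta^a$, and the classification of tangent objects entering the cone-splitting lemma must all be established or referenced in this weighted half-space framework. Moreover, the splitting directions must be tangent to $\partial\mathbb{R}_{+}^{m+1}$, which is precisely why the stratification only runs up to $k=m-1$ and the extra bulk dimension plays no role. Once these weighted analogues of the standard tools are in place, the abstract covering scheme of \cite{Cheeger-Naber-2013-CPAM} adapts with only minor, essentially notational, modifications.
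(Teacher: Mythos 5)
Your proposal follows essentially the same route as the paper: the monotone weighted density for the minimizing extension, quantitative rigidity (small density drop across a definite range of scales implies almost $0$-symmetry), quantitative cone-splitting with splitting directions tangent to $\partial\mathbb{R}_{+}^{m+1}$, and the Cheeger--Naber covering scheme driven by the bound $C(q+1)\Lambda\delta^{-1}$ on the number of scales with a definite energy drop. The only step you leave implicit --- decomposing $D_1$ into the at most $\alpha^{Q}$ sets $E_{T^{\alpha}}$ indexed by the pattern of good/bad scales (needed because different points drop energy at different scales), so that the per-pattern count $(c_0\gamma^{-m})^{Q}(c_0\gamma^{-k})^{\alpha-Q}$ can be absorbed into $r^{m-k-\eta}$ --- is precisely the Cheeger--Naber device you defer to, and it is what the paper's Covering Lemma carries out.
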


As mentioned above, to prove this volume estimate, we employ the approach of Cheeger and Naber \cite{Cheeger-Naber-2013-CPAM}. However, unlike the situations for harmonic and biharmonic maps in \cite{Cheeger-Naber-2013-CPAM, Breiner-Lamm-2015}, we will have to use the monotonicity formula (see \eqref{monotonicity formula}) which holds on the upper half-space \(\mathbb{R}_{+}^{m+1}\). Thus we need to introduce a new type of quantitative symmetry to match the monotonicity property, and then establish quantitative cone splitting principles to construct a useful cover for the quantitative singular set \(\mathcal{S}_{\eta,r}^k(u)\). Details are provided in Section~\ref{sec: Quantitative stratification and volume estimates}.

Once Theorem~\ref{volume estimate} is established, Theorems~\ref{thm: integrability} and~\ref{thm: regularity scale estimate} follow from the following compactness theorem and an \(\varepsilon\)-regularity result (Theorem~\ref{thm: sym-to-reg}). 

\begin{theorem}[Compactness]\label{thm: compactness}
    For $m \geq 3$ let $s \in (0,1)$, and for $m = 2$ let $s \in (0, 2/3)$. Assume that $\{u_i\}_{i \geq 1} \subset H_{\Lambda}^{s}(D_4, N)$ is a uniformly bounded sequence of minimizing intrinsic $s$-harmonic maps.
    Then there exist a subsequence $\{u_i\}$, a map $u \in H_{\text{int}}^{s}(D_4, N)$, sequences $\{v_i\}$ and $v$ in $\dot{W}_a^{1,2}(\mathbb{R}_{+}^{m+1}; N)$ with $\left.T\right|_{D_4} v_i = u_i$, $\left.T\right|_{D_4} v = u$, and $I^a(u_i, D_4) = E^a(v_i)$, such that for any open subset $\omega \subset D_4$ with $\overline{\omega} \subset D_4$ and every $B_\rho^+(\mathbf{y})$ satisfying $\overline{\partial^0 B_\rho^+(\mathbf{y})} \subset D_4$, where $\mathbf{y} = (y, 0)$, the following convergences hold:
    \[
    \begin{aligned}
        & u_i \to u \quad \text{strongly in } H_{\text{int}}^{s}(\omega, N), \\
        & v_i \to v \quad \text{strongly in } \dot{W}_a^{1,2}(B_\rho^+(\mathbf{y}); N).
    \end{aligned}
    \]
\end{theorem}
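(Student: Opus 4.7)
The plan is to adapt the classical Schoen--Uhlenbeck variational compactness scheme to the degenerate weighted half-space setting, using the modified Luckhaus lemma of Roberts \cite{Roberts-18-CV} to build competitors that lie in $N$ and then exploiting the local minimality of each $u_i$ to upgrade weak to strong convergence. First, from $E^a(v_i)=I^a(u_i,D_4)<\Lambda$ the sequence $\{v_i\}$ is uniformly bounded in $\dot W_a^{1,2}(\mathbb R^{m+1}_+;\mathbb R^n)$. Since $z^a$ with $a=1-2s\in(-1,1)$ is a Muckenhoupt $A_2$-weight, weighted Rellich compactness yields, along a subsequence, $v_i\wto v$ weakly in $\dot W_a^{1,2}$, $v_i\to v$ strongly in $L^2(z^a\,\D\mathbf x)$ on bounded sets, and $v_i\to v$ pointwise a.e. In particular $v(\mathbf x)\in N$ a.e., so $v\in\dot W_a^{1,2}(\mathbb R^{m+1}_+;N)$, and $u:=T|_{D_4}v$ satisfies $u_i\to u$ in $L^p_\loc(D_4)$ by the continuity and local compactness of the trace.

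Next I fix any half-ball $B^+_\rho(\mathbf y)$ with $\overline{\partial^0 B^+_\rho(\mathbf y)}\subset D_4$ and, for each small $\epsilon>0$, pick via Fubini a generic radius $\rho'\in(\rho,\rho+\epsilon)$ for which $\overline{\partial^0 B^+_{\rho'}(\mathbf y)}\subset D_4$ and the restrictions of $v_i,v$ to the hemisphere $\partial^+ B^+_{\rho'}$ have uniformly bounded weighted $H^1$ energy with $v_i\to v$ in the corresponding weighted $L^2$. The modified Luckhaus lemma of Roberts \cite{Roberts-18-CV} then produces on a thin half-annular shell $A_\epsilon:=B^+_{\rho'+\epsilon}\setminus B^+_{\rho'}$ a map $w_i^\epsilon\in\dot W_a^{1,2}(A_\epsilon;N)$ whose inner hemispherical trace is $v$, whose outer trace is $v_i$, and whose energy obeys a bound of the schematic form
\[
E^a(w_i^\epsilon,A_\epsilon)\leq C\bigl(\epsilon[E^a(v_i,\widehat A_\epsilon)+E^a(v,\widehat A_\epsilon)]+\epsilon^{-1}\|v_i-v\|_{L^2(z^a\D\mathbf x;\widehat A_\epsilon)}^2\bigr)
\]
on a slightly fattened shell $\widehat A_\epsilon$. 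Gluing yields the competitor
\[
\tilde v_i^\epsilon:=\chi_{\mathbb R^{m+1}_+\setminus B^+_{\rho'+\epsilon}}\,v_i+\chi_{A_\epsilon}\,w_i^\epsilon+\chi_{B^+_{\rho'}}\,v\;\in\;\dot W_a^{1,2}(\mathbb R^{m+1}_+;N),
\]
whose boundary trace on $D_4$ coincides with $u_i$ outside the compact set $\overline{\partial^0 B^+_{\rho'+\epsilon}}\subset D_4$.

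By the local minimality of $u_i$ from Definition \ref{def:u minimising}, $E^a(v_i)=I^a(u_i,D_4)\leq I^a(T|_{D_4}\tilde v_i^\epsilon,D_4)\leq E^a(\tilde v_i^\epsilon)$, and cancelling the common exterior contribution gives $E^a(v_i,B^+_{\rho'+\epsilon})\leq E^a(v,B^+_{\rho'})+E^a(w_i^\epsilon,A_\epsilon)$. Sending $i\to\infty$ and then $\epsilon\to 0$ along a diagonal choice $\epsilon=\epsilon_i$ that kills the $\epsilon^{-1}\|v_i-v\|^2$ term yields $\limsup_i E^a(v_i,B^+_\rho)\leq E^a(v,B^+_\rho)$, which combined with weak lower semicontinuity gives $E^a(v_i,B^+_\rho)\to E^a(v,B^+_\rho)$. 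Energy convergence plus weak convergence in the Hilbert space $\dot W_a^{1,2}$ upgrades to strong convergence $v_i\to v$ in $\dot W_a^{1,2}(B^+_\rho(\mathbf y);N)$. Strong convergence $u_i\to u$ in $H^s_{\text{int}}(\omega,N)$ for $\omega\ssub D_4$ then follows by covering $\omega$ with finitely many such half-balls and invoking the boundedness of the trace operator.

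The main obstacle is the Luckhaus step: a linear interpolation between $v_i$ and $v$ would generically leave $N$, and the nearest-point projection onto $N$ blows up wherever the interpolant is far from $N$. Roberts's modified Luckhaus lemma circumvents this by an averaging-over-slices construction adapted to the weighted measure $z^a\,\D\mathbf x$, producing an $N$-valued interpolant whose energy is controlled by the weighted $L^2$ distance $\|v_i-v\|$ and the boundary energies. A secondary subtlety is the slice selection: since $\dot W_a^{1,2}$ does not embed into any fixed hemispherical trace space, one must pick $\rho'$ generically via Fubini so that both $v_i$ and $v$ have controlled traces there, and then balance the competing $\epsilon$ and $\epsilon^{-1}$ terms in the Luckhaus energy bound through a careful diagonal selection $\epsilon_i\to 0$.
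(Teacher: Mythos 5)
Your overall strategy coincides with the paper's: extract a weak limit, use the modified Luckhaus lemma to interpolate between $v_i$ and $v$ on a thin half-annulus, glue a competitor, invoke minimality to get an energy comparison, and upgrade weak to strong convergence via energy convergence. However, there is a genuine gap in how you dispose of the gradient contribution of the Luckhaus interpolant. The bound $C\epsilon\bigl[E^a(v_i)+E^a(v)\bigr]+C\epsilon^{-1}\|v_i-v\|^2$ with the factor $\epsilon$ in front of the gradient term is the estimate on the product $\mathbb{S}_{+}^m\times[0,\epsilon]$ (Lemma \ref{lemma:modified Luckhaus}); to transfer it to the half-annulus one must first select a good hemisphere by a Fubini/slicing argument, and that selection costs a factor $\epsilon^{-1}$ on the boundary energies. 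The net result (this is exactly Corollary \ref{corollary:Luckhaus corollary}) is
\[
E^a\bigl(w_i^\epsilon,A_\epsilon\bigr)\le C\bigl[E^a(v_i,A_\epsilon)+E^a(v,A_\epsilon)\bigr]+C\epsilon^{-2}\rho^{-2}\|v_i-v\|_{L^2_a(A_\epsilon)}^2,
\]
with $C$ \emph{independent of} $\epsilon$. Hence sending $\epsilon\to0$ does not kill the gradient term, and your comparison only yields
$\limsup_i E^a(v_i,B^+_{\rho'})\le E^a(v,B^+_{\rho'})+C\limsup_i E^a(v_i,A_\epsilon)+CE^a(v,A_\epsilon)$, where $\limsup_i E^a(v_i,A_\epsilon)$ need not tend to zero as $\epsilon\to0$ because the energies of the $v_i$ may concentrate on the hemisphere $\partial^+B_{\rho'}$ in the weak limit. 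Your ``generic radius via Fubini'' addresses trace regularity on the slice but not this possible concentration.

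The paper closes this gap with the standard Schoen--Uhlenbeck/Luckhaus pigeonhole: for given $\delta>0$ it covers $(\theta\rho_0,\rho_0)$ by $M$ disjoint annuli of relative width $\epsilon$, where $M\delta$ exceeds the total energy, so that for some annulus and some subsequence one has $\rho_0^{2s-m}\int_{A}z^a|\nabla v_j|^2<\delta$ (and hence also $\le\delta$ for $v$ by weak lower semicontinuity). Running your comparison on \emph{that} annulus makes the gradient error term $\le C\delta$ with $\delta$ arbitrary, which gives $\limsup_i E^a(v_i,B^+_\rho)\le E^a(v,B^+_\rho)$ for some $\rho\in(\theta\rho_0,\rho_0)$ and then, by the arbitrariness of $\theta$ and $\rho_0$, for all admissible half-balls. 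An alternative repair would be to choose $\rho'$ so that the defect measure of $z^a|\nabla v_i|^2\,\D\mathbf{x}$ assigns no mass to $\partial^+B_{\rho'}$, but some such device is indispensable; as written, your diagonal choice $\epsilon=\epsilon_i$ only controls the $\epsilon^{-2}\|v_i-v\|^2$ term and the argument is incomplete. The remaining steps (hypothesis verification for the Luckhaus corollary, energy convergence plus weak convergence implying strong convergence in the Hilbert space, and the covering argument for $u_i\to u$ in $H^{s}_{\text{int}}(\omega,N)$) match the paper and are sound.
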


We establish this property using a modified Luckhaus lemma; see Section~\ref{sec: compactness} for details.
	
	\medskip	
	
	{\bf Notation.} Throughout the paper, we will use the following notations:
	
    $\bullet$ $a=1-2s$;

	$\bullet$ $\mathbb{R}_{+}^{m+1}=\{\mathbf{x}=(x,z):x\in \R^m, z>0\}$ denotes the $n+1$ dimensional open upper half space, the point $\mathbf{x}$ in $\mathbb{R}^{m+1}$ could also be denoted by $(x_1, x_2, \cdots, x_{m+1})$ in context for convenience;
	
	$\bullet$ $B_r(\mathbf{x})$ denotes the open ball in $\mathbb{R}^{m+1}$  with radius $r$ centered at $\mathbf{x}=(x, z)$;
	
	$\bullet$ $B_r^{+}(\mathbf{x})$ denotes the half open ball in $\mathbb{R}_{+}^{m+1}$ of radius $r$ centered at $\mathbf{x}=(x, 0)$, and simply write $B^+_r=B^+_r(0)$;
	
	$\bullet$ $D_r(x)$ denotes the the open ball/disk in $\mathbb{R}^m$ centered at $x$, and write $D_r=D_r(0)$.

	$\bullet$ For an arbitrary open set \( G \subseteq \mathbb{R}_{+}^{m+1} \), we write
\[
\partial^{+}G := \partial G \cap \mathbb{R}_{+}^{m+1}.
\]
The (relative) open set $\partial^{0}G \subseteq \partial\mathbb{R}_{+}^{m+1}$ is defined by
\[
\partial^{0}G := \left\{ \mathbf{x} \in \partial G \cap \partial \mathbb{R}_{+}^{m+1} : B_{r}^{+}(\mathbf{x}) \subseteq G \text{ for some } r > 0 \right\}.
\]
	
	$\bullet$ Finally, we identify $\mathbb{R}^m=\partial \mathbb{R}_{+}^{m+1}$; a set $A \subset \mathbb{R}^m$ is also identified with $A \times\{0\} \subset \partial \mathbb{R}_{+}^{m+1}$.

 \section{ Preliminaries}\label{sec: Preliminaries}

This section is devoted to a review of preliminary results concerning minimizing intrinsic $s$-harmonic mappings. Standard references for this material include \cite{Moser-11, Roberts-18-CV}.

\subsection{Some Sobolev spaces}\label{sec: Sobolev spaces}

We begin by introducing the space \(W_a^{1,2}(\Omega ; \mathbb{R}^n)\). As shown in \cite{Roberts-18-CV}, the Euler-Lagrange equations for \(E^a\) at critical points \(v: \mathbb{R}_{+}^{m+1} \rightarrow N\) are semilinear, with leading-order term \(\operatorname{div}(z^a \nabla v)\). To analyze solutions of such equations, we require Sobolev spaces adapted to this structure. In this section, we record the necessary function spaces and summarize some of their key analytical properties.

Let \(\Omega \subset \mathbb{R}^{m+1}\). For every \(s \in (0,1)\) and \(a = 1 - 2s \in (-1,1)\), we define the weighted Lebesgue space
\[
L_a^2(\Omega ; \mathbb{R}^n) = \left\{ f: \Omega \rightarrow \mathbb{R}^n \text{ measurable} : \int_\Omega |f|^2 |z|^a  \mathrm{d}\mathbf{x} < \infty \right\},
\]
which is a Hilbert space (see \cite{Cohn-13}, Theorem 3.4.1), where \(\mathrm{d}\mathbf{x}\) denotes Lebesgue measure on \(\mathbb{R}^{m+1}\). The inner product of \(f, g \in L_a^2(\Omega ; \mathbb{R}^n)\) is given by
\[
\langle f, g \rangle_{L_a^2(\Omega ; \mathbb{R}^n)} = \int_\Omega \langle f, g \rangle |z|^a  \mathrm{d}\mathbf{x},
\]
where \(\langle f, g \rangle\) is the Euclidean inner product in \(\mathbb{R}^n\).
Then define the weighted Sobolev space
\begin{equation}\label{def: weighted Sobolev space}
W_a^{1,2}(\Omega ; \mathbb{R}^n) = \left\{ v: \Omega \rightarrow \mathbb{R}^n : v, \partial_i v \in L_a^2(\Omega ; \mathbb{R}^n) \text{ for } i = 1, \dots, m+1 \right\},
\end{equation}
where \(\partial_i v\) denotes the weak partial derivative with respect to \(x_i\). By Proposition 2.1.2 of \cite{Turesson-2000}, this is a Hilbert space under the inner product
\[
\langle v, w \rangle_{W_a^{1,2}(\Omega ; \mathbb{R}^n)} = \int_\Omega \langle v, w \rangle |z|^a  \mathrm{d}\mathbf{x} + \int_\Omega \langle \nabla v, \nabla w \rangle |z|^a  \mathrm{d}\mathbf{x},
\]
for \(v, w \in W_a^{1,2}(\Omega ; \mathbb{R}^n)\), where \(\langle \nabla v, \nabla w \rangle = \sum_{i=1}^{m+1} \langle \partial_i v, \partial_i w \rangle\).

We further define \(W_{a,0}^{1,2}(\Omega ; \mathbb{R}^n)\) as the closure of \(C_0^\infty(\Omega ; \mathbb{R}^n)\) in \(W_a^{1,2}(\Omega ; \mathbb{R}^n)\) with respect to the norm induced by the inner product above. When \(a = 0\), we omit the subscript \(a\) in the notation.

The spaces \(W_a^{1,2}\), \(W_{a,0}^{1,2}\), and \(L_a^2\) inherit fundamental analytical properties, such as completeness, reflexivity, and separability, from their unweighted counterparts \(W^{1,2}\), \(W_0^{1,2}\), and \(L^2\), respectively.

We now state a lemma relating the spaces \(W_a^{1,2}\) and \(W^{1,p}\), which will be useful in subsequent analysis.

\begin{lemma}[\cite{Roberts-18-CV}, Lemma 2.1] \label{lemma:function space}
Let \(s \in (0,1)\) and suppose \(\Omega \subset \mathbb{R}_{+}^{m+1}\) is open and bounded.
\begin{enumerate}
    \item If \(\overline{\Omega} \subset \mathbb{R}_{+}^{m+1}\), then \(W_a^{1,2}(\Omega ; \mathbb{R}^n) = W^{1,2}(\Omega ; \mathbb{R}^n)\).
    \item If \(s \in [1/2, 1)\), then \(W_a^{1,2}(\Omega ; \mathbb{R}^n) \subset W^{1,2}(\Omega ; \mathbb{R}^n)\).
    \item If \(s \in (0, 1/2)\), then \(W_a^{1,2}(\Omega ; \mathbb{R}^n) \subset W^{1,p}(\Omega ; \mathbb{R}^n)\) for every \(1 \leq p < \frac{1}{1-s}\).
\end{enumerate}
\end{lemma}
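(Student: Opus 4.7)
My plan is to reduce each of the three claims to elementary properties of the weight $z^a$ on the bounded set $\Omega \subset \R_+^{m+1}$. A preliminary observation will streamline all three parts: since any test function $\varphi \in C_0^\infty(\Omega;\R^n)$ has support compactly contained in $\R_+^{m+1}$, the weight $z^a$ is bounded above and below by positive constants on $\mathrm{supp}(\varphi)$, so the weighted and unweighted weak derivatives on $\Omega$ coincide. Thus in each case it suffices to prove the corresponding $L^p$-inclusion for scalar functions, which I then apply to $v$ and to each $\partial_i v$ in turn.

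For part (1), the hypothesis $\overline{\Omega} \subset \R_+^{m+1}$ together with boundedness gives $0 < c \le z \le C < \infty$ on $\Omega$, hence $z^a$ is trapped between two positive constants. The weighted and unweighted $L^2$ norms are therefore equivalent, and the two spaces coincide. For part (2), $s \in [1/2,1)$ gives $a \le 0$. With $z \le R$ on bounded $\Omega$, monotonicity of $t \mapsto t^a$ yields $z^a \ge R^a > 0$, so $\int_\Omega |f|^2 \, \D \mathbf{x} \le R^{-a} \int_\Omega |f|^2 z^a \, \D \mathbf{x}$ for any measurable $f$; this gives $L_a^2(\Omega;\R^n) \subset L^2(\Omega;\R^n)$ and hence $W_a^{1,2}(\Omega;\R^n) \subset W^{1,2}(\Omega;\R^n)$.

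Part (3) is the one that actually requires an idea, since $s \in (0,1/2)$ forces $a \in (0,1)$ and the weight degenerates on $\{z=0\}$; consequently the inclusion must be into a strictly smaller space $W^{1,p}$ with $p<2$. I would split $|f|^p = (|f|^p z^{ap/2}) \cdot z^{-ap/2}$ and apply H\"older's inequality with exponents $2/p$ and $2/(2-p)$, obtaining
\[
\int_\Omega |f|^p \, \D \mathbf{x} \;\le\; \left( \int_\Omega |f|^2 z^a \, \D \mathbf{x} \right)^{p/2} \left( \int_\Omega z^{-ap/(2-p)} \, \D \mathbf{x} \right)^{(2-p)/2}.
\]
The second factor is finite on bounded $\Omega \subset \R_+^{m+1}$ precisely when $ap/(2-p) < 1$ (using Fubini and one-dimensional integrability of $t^{-\alpha}$ near $0$ for $\alpha < 1$), which rearranges to $p < 2/(1+a) = 1/(1-s)$, exactly the range stated in the lemma. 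The only genuine step of the whole proof is recognizing this H\"older pairing so that the weight exponent lands right at the critical integrability threshold; once that is in place, applying the estimate with $f = v$ and $f = \partial_i v$ completes the argument.
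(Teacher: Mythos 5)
Your argument is correct: the weight comparison $0<c\le z^a\le C$ for part (1), the bound $z^a\ge R^a$ for $a\le 0$ in part (2), and the H\"older splitting $|f|^p=(|f|^pz^{ap/2})\,z^{-ap/2}$ with exponents $2/p$, $2/(2-p)$ in part (3) — whose integrability condition $ap/(2-p)<1$ rearranges exactly to $p<1/(1-s)$ — give a complete proof, applied to $v$ and each $\partial_i v$. The paper does not prove this lemma but cites it from Roberts, and your proof is essentially the standard argument used there, so there is nothing further to flag.
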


As a consequence, we obtain the following compact embedding:
\begin{equation} \label{eq:compact embedding}
W_a^{1,2}(\Omega ; \mathbb{R}^n) \hookrightarrow \hookrightarrow L^\gamma(\Omega), \quad \text{for all } 1 < \gamma < \min\left\{ \frac{1}{1-s},\ 2 \right\}.
\end{equation}

To analyze rescaled limits of bounded sequences in these Sobolev spaces, we also require an analogue of the classical Rellich-Kondrachov compactness theorem. Away from the boundary (i.e., for \(\Omega\) such that \(\overline{\Omega} \subset \mathbb{R}_{+}^{m+1}\)), the compact embedding \(W^{1,2}\hookrightarrow \hookrightarrow L^2\) implies the compactness of \(W_a^{1,2} \hookrightarrow L_a^2\) by Lemma \ref{lemma:function space}. Near the boundary \(\partial \mathbb{R}_{+}^{m+1}\), we rely on the following lemma.

\begin{lemma}[\cite{Roberts-18-CV}, Theorem 2.5] \label{lemma:Compactness of the embedding}
Let \(r > 0\), \(\mathbf{y} \in \partial \mathbb{R}_{+}^{m+1}\), and suppose \((v_j)_{j \in \mathbb{N}}\) is a sequence in \(W_a^{1,2}(B_r^+(\mathbf{y}) ; \mathbb{R}^n)\) such that
\[
\sup_j \| v_j \|_{W_a^{1,2}(B_r^+(\mathbf{y}) ; \mathbb{R}^n)} < \infty.
\]
Then there exists a subsequence \((v_{j_k})_{k \in \mathbb{N}}\) and a function \(v \in W_a^{1,2}(B_r^+(\mathbf{y}) ; \mathbb{R}^n)\) such that:
\begin{enumerate}
    \item \(v_{j_k} \rightharpoonup v\) weakly in \(W_a^{1,2}(B_r^+(\mathbf{y}) ; \mathbb{R}^n)\),
    \item \(v_{j_k} \rightarrow v\) strongly in \(L_a^2(B_r^+(\mathbf{y}) ; \mathbb{R}^n)\),
    \item \(\displaystyle \int_{B_r^+(\mathbf{y})} z^a |\nabla v|^2  \mathrm{d}\mathbf{x} \leq \liminf_{k \to \infty} \int_{B_r^+(\mathbf{y})} z^a |\nabla v_{j_k}|^2  \mathrm{d}\mathbf{x}\).
\end{enumerate}
\end{lemma}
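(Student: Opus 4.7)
The plan is to verify the three conclusions in order: (i) weak compactness via reflexivity, (ii) strong $L_a^2$-convergence by a boundary-layer decomposition exploiting the Muckenhoupt $A_2$ property of $|z|^a$, and (iii) lower semicontinuity of the weighted Dirichlet energy as a consequence of weak convergence of the gradients in the Hilbert space $L_a^2$.

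For (i), recall from the discussion preceding this lemma that $W_a^{1,2}(B_r^+(\mathbf{y}); \mathbb{R}^n)$ is a separable Hilbert space. Hence Banach--Alaoglu applied to the norm-bounded sequence $(v_j)$ extracts a subsequence $(v_{j_k})$ converging weakly to some $v \in W_a^{1,2}(B_r^+(\mathbf{y}); \mathbb{R}^n)$. In particular $\nabla v_{j_k} \rightharpoonup \nabla v$ weakly in $L_a^2$, which directly yields the lower semicontinuity statement (iii) because $w \mapsto \|w\|_{L_a^2}^2$ is weakly lower semicontinuous.

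The main obstacle is (ii), where the weight $z^a$ degenerates near $\{z=0\}$ when $s<1/2$ and blows up when $s>1/2$. My plan is to decompose
\[
B_r^+(\mathbf{y}) = \Omega_\delta \cup L_\delta, \qquad \Omega_\delta = B_r^+(\mathbf{y}) \cap \{z > \delta\}, \quad L_\delta = B_r^+(\mathbf{y}) \cap \{0 < z \leq \delta\},
\]
and treat the two regions separately. On $\Omega_\delta$ the weight $z^a$ is bounded above and below by positive constants depending on $\delta$ and $r$, so $(v_{j_k})$ is uniformly bounded in the unweighted space $W^{1,2}(\Omega_\delta; \mathbb{R}^n)$ and the classical Rellich--Kondrachov theorem gives strong convergence $v_{j_k} \to v$ in $L^2(\Omega_\delta; \mathbb{R}^n)$, hence in $L_a^2(\Omega_\delta; \mathbb{R}^n)$. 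On the boundary layer $L_\delta$, I use the fact that $|z|^a$ is a Muckenhoupt $A_2$ weight (precisely when $a \in (-1,1)$) to invoke the weighted Sobolev embedding, which supplies a uniform bound $\|v_{j_k}\|_{L_a^q(L_\delta)} \leq C$ for some $q>2$. Hölder's inequality then gives
\[
\int_{L_\delta} z^a |v_{j_k} - v|^2  \D \mathbf{x} \leq C \Big( \int_{L_\delta} z^a  \D \mathbf{x} \Big)^{1-2/q},
\]
and the right-hand side tends to zero uniformly in $k$ as $\delta \to 0$, since $\int_0^\delta z^a  \D z = \delta^{a+1}/(a+1) \to 0$. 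A diagonal argument combining the two regimes as $\delta \to 0$ delivers strong convergence in $L_a^2(B_r^+(\mathbf{y}); \mathbb{R}^n)$.

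The hard part is thus the uniform control in the boundary layer $L_\delta$: the $A_2$-condition on $|z|^a$ is exactly the threshold that makes the weighted Sobolev embedding (or, alternatively, the embeddings in Lemma~\ref{lemma:function space} followed by the unweighted Sobolev inequality) applicable, and it is also what guarantees that the weighted measure of $L_\delta$ vanishes as $\delta \to 0$. If instead $|z|^a$ failed to be $A_2$ (i.e.\ $|a| \geq 1$), either the weighted Poincaré inequality or the integrability $\int_0^\delta z^a  \D z < \infty$ would break down, and this whole strategy would collapse; this is essentially the reason the admissible range $s \in (0,1)$ is sharp for the compactness statement.
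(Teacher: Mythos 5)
This lemma is imported verbatim from Roberts \cite{Roberts-18-CV} (Theorem 2.5); the paper itself offers no proof, so there is nothing internal to compare against. Your argument is the standard proof of compactness of the embedding $W^{1,2}_w \hookrightarrow L^2_w$ for a Muckenhoupt $A_2$ weight, and the overall structure is sound: parts (i) and (iii) are immediate from Hilbert-space weak compactness and weak lower semicontinuity of the $L_a^2$-norm of the gradient, and the interior-plus-boundary-layer splitting for (ii) is exactly the right idea. Two points need to be tightened. First, the uniform bound $\|v_{j_k}\|_{L_a^q(L_\delta)} \leq C$ cannot be obtained by applying a Sobolev or Poincar\'e inequality on the thin slab $L_\delta$ itself, whose constant degenerates as $\delta \to 0$; you should instead establish the embedding $W_a^{1,2}(B_r^+(\mathbf{y})) \hookrightarrow L_a^q(B_r^+(\mathbf{y}))$ on the \emph{full} half-ball for some $q>2$ and then simply restrict to $L_\delta$, which gives a $\delta$-independent constant. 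Second, that embedding on a half-ball for functions that are not compactly supported requires a justification — the cleanest route is even reflection across $\{z=0\}$, noting that $|z|^a$ is $A_2$ on all of $\mathbb{R}^{m+1}$ for $a\in(-1,1)$, followed by the Fabes--Kenig--Serapioni embedding on the reflected ball; you should also record that $v$ itself lies in $L_a^q$ (by Fatou or by the same embedding) before applying H\"older to $|v_{j_k}-v|$. With these repairs the diagonal argument closes, and the proof is correct.
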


\begin{remark}
Lemma \ref{lemma:Compactness of the embedding} remains valid when \(B_r^+(\mathbf{y})\) is replaced by \(\mathcal{O} \times [0, r]\) for any \(r > 0\) and bounded smooth open  domain \(\mathcal{O} \subset \partial \mathbb{R}_{+}^{m+1}\).
\end{remark}

Next, we introduce weighted homogeneous Sobolev spaces. Consider the Dirichlet energy
\begin{equation}\label{eq:Dirichlet energy}
E^a(v)=\frac{1}{2} \int_{\mathbb{R}_{+}^{m+1}} z^a|\nabla v|^2 \mathrm{~d} \mathbf{x}.
\end{equation}
It is well-defined on
$$
\mathcal{D}_{+}\left(\mathbb{R}_{+}^{m+1} ; \mathbb{R}^n\right):=\left\{\phi:\phi=f|_{\mathbb{R}_{+}^{m+1}} \text { for some } f \in C_0^{\infty}\left(\mathbb{R}^{m+1} ; \mathbb{R}^n\right)\right\}.
$$
The energy $E^a$ is naturally associated to the following Sobolev space.

\begin{definition}\label{def: homo-weighted Sobolev}
Let $s\in (0,1)$. The weighted homogeneous Sobolev space $\dot{W}_a^{1,2}\left(\mathbb{R}_{+}^{m+1} ; \mathbb{R}^n\right)$ is the completion of $\mathcal{D}_{+}\left(\mathbb{R}_{+}^{m+1} ; \mathbb{R}^n\right)$ under the norm induced by the square root of $E^a$.
\end{definition}

The elements of $\dot{W}_a^{1,2}\left(\mathbb{R}_{+}^{m+1} ; \mathbb{R}^n\right)$ are, strictly speaking, equivalent classes of Cauchy sequences and it will be necessary to have tangible representatives of these classes which may take values in $N$.

\begin{lemma} (\cite[Theorem 2.3]{Roberts-18-CV}) \label{lemma:Weighted homogeneous Sobolev spaces}
 Let $m \in \mathbb{N}$ with $m \geq 2$ and $\Omega$ be an open bounded subset of $\mathbb{R}_{+}^{m+1}$. When $m \geq 3$ let $s \in (0,1)$, and when $m = 2$ let $s \in (0, 2/3)$. Then the restriction operator $I: \mathcal{D}_{+}\left(\mathbb{R}_{+}^{m+1} ; \mathbb{R}^n\right) \rightarrow W_a^{1,2}\left(\Omega ; \mathbb{R}^n\right):\left.f \mapsto f\right|_{\Omega}$ extends to a bounded linear operator
 $I: \dot{W}_a^{1,2}\left(\mathbb{R}_{+}^{m+1} ; \mathbb{R}^n\right) \rightarrow W_a^{1,2}\left(\Omega ; \mathbb{R}^n\right)$. Moreover,
\begin{equation}\label{eq:Weighted homogeneous Sobolev spaces}
\|I v\|_{W_a^{1,2}\left(\Omega ; \mathbb{R}^n\right)} \leq C\|v\|_{\dot{W}_a^{1,2}\left(\mathbb{R}_{+}^{m+1} ; \mathbb{R}^n\right)}
\end{equation}
for every $v \in \dot{W}_a^{1,2}\left(\mathbb{R}_{+}^{m+1} ; \mathbb{R}^n\right)$ where $C$ is a positive constant depending on $m$ and $\Omega$.
\end{lemma}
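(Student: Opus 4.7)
The plan is to establish the estimate \eqref{eq:Weighted homogeneous Sobolev spaces} first on the dense subset $\mathcal{D}_{+}(\mathbb{R}_{+}^{m+1};\mathbb{R}^n)$ and then pass to the completion by a standard density argument. Since the gradient term in the $W_a^{1,2}(\Omega;\mathbb{R}^n)$-norm is controlled for free by $2E^a(\phi)$, the crux of the matter is a weighted $L^2$-Poincaré-type inequality
$$
\int_{\Omega} z^{a}\,|\phi|^{2}\, d\mathbf{x} \;\leq\; C(m,a,\Omega)\, E^{a}(\phi)
\qquad \text{for all } \phi \in \mathcal{D}_{+}(\mathbb{R}_{+}^{m+1};\mathbb{R}^{n}).
$$

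First I would observe that for $a = 1-2s \in (-1,1)$ the weight $|z|^{a}$ belongs to the Muckenhoupt class $A_{2}(\mathbb{R}^{m+1})$, so even reflection of $\phi$ across $\partial \mathbb{R}_{+}^{m+1}$ produces a compactly supported Lipschitz function on all of $\mathbb{R}^{m+1}$ whose weighted Dirichlet energy equals $2E^{a}(\phi)$. The Fabes--Kenig--Serapioni weighted Sobolev embedding for $A_{2}$ weights then provides
$$
\|\phi\|_{L^{q}_{a}(\mathbb{R}_{+}^{m+1})} \;\leq\; C\,\|\nabla \phi\|_{L^{2}_{a}(\mathbb{R}_{+}^{m+1})}
$$
for some Sobolev exponent $q > 2$ determined by the effective weighted dimension $m+1+a$. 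Hölder's inequality over the bounded set $\Omega$, combined with the finiteness of $\int_{\Omega} z^{a}\,d\mathbf{x}$ (which holds because $a > -1$), then upgrades this global embedding into the sought weighted Poincaré bound on $\Omega$. The dimensional hypothesis (namely $m \geq 3$ with $s \in (0,1)$, or $m = 2$ with $s \in (0,2/3)$) is what guarantees this weighted Sobolev machinery is available in a form that produces an exponent $q$ strictly larger than $2$.

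Once the estimate is known on $\mathcal{D}_{+}$, the extension to the completion is routine. If $v \in \dot{W}_{a}^{1,2}(\mathbb{R}_{+}^{m+1};\mathbb{R}^{n})$ is represented by a Cauchy sequence $\{\phi_{k}\} \subset \mathcal{D}_{+}$ with respect to $\sqrt{E^{a}}$, then applying the pointwise estimate to the differences $\phi_{k} - \phi_{\ell}$ shows $\{\phi_{k}|_{\Omega}\}$ is Cauchy in the Hilbert space $W_{a}^{1,2}(\Omega;\mathbb{R}^{n})$. One defines $Iv$ to be its strong limit; independence from the representing Cauchy sequence, as well as the bound \eqref{eq:Weighted homogeneous Sobolev spaces}, follow at once by passing to the limit in the same inequality.

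The main obstacle will be securing the correct weighted Sobolev inequality in this $A_{2}$-weighted half-space setting with a Sobolev exponent strictly above $2$, and verifying that the stated dimensional hypothesis is precisely what is needed for this. Everything after that is purely a completion-and-density argument in Hilbert spaces.
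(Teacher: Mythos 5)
This lemma is not proved in the paper at all: it is imported verbatim from Roberts \cite[Theorem 2.3]{Roberts-18-CV}, so there is no internal proof to compare against. Judged on its own terms, your outer architecture is right and standard: prove the estimate on the dense class $\mathcal{D}_{+}$, then pass to the completion; the gradient part of the $W_a^{1,2}(\Omega)$-norm is trivially bounded by $2E^a$; and the Cauchy-sequence/well-definedness argument at the end is routine and correct.

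The genuine gap is the core analytic step, which you yourself flag as ``the main obstacle'' and then do not close. The Fabes--Kenig--Serapioni theorem for $A_2$ weights gives a \emph{local} Sobolev--Poincar\'e inequality for $u\in C_0^\infty(B_r)$, with a constant of the form $Cr\,w(B_r)^{1/(2k)-1/2}$ and an admissible exponent range $1\le k\le \tfrac{m+1}{m-1}+\delta$ with $\delta$ non-explicit. To obtain a bound on $\int_\Omega z^a|\phi|^2$ with a constant independent of the (unbounded, along the Cauchy sequence) support of $\phi$, you need the \emph{global, scale-invariant} inequality, i.e.\ you must hit exactly the scale-critical exponent and verify it lies inside the admissible range; otherwise the radius-dependent constant degenerates as the supports grow and the completion argument collapses. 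This verification is precisely where the hypothesis ($m\ge 3$ with $s\in(0,1)$, or $m=2$ with $s<2/3$) must enter --- for instance, for the half-space inequality with weight $z^{a\gamma}$ on the left, scaling forces $\gamma=\tfrac{m+1}{m-1}$, and at $m=2$ the integrability requirement $a\gamma>-1$ becomes $a>-1/3$, i.e.\ $s<2/3$. Your proposal asserts that the dimensional hypothesis ``is what guarantees this weighted Sobolev machinery is available'' without deriving it, so the argument as written neither produces a support-independent constant nor explains the threshold. Roberts's own proof rests on establishing exactly such a scale-invariant Caffarelli--Kohn--Nirenberg-type inequality on $\mathbb{R}^{m+1}_+$ first; that inequality, not generic $A_2$ theory, is the missing ingredient.
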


We conclude this subsection with a discussion of the trace operator $T|_{\mathcal{O}}$. By combining Lemmata \ref{lemma:function space} and \ref{lemma:Weighted homogeneous Sobolev spaces} with Theorem 1 of \cite[Section 4.3]{Evans-1992},  there has a bounded  trace operator $\left.T\right|_{\mathcal{O}}: \dot{W}_a^{1,2}\left(\mathbb{R}_{+}^{m+1} ; \mathbb{R}^n\right) \rightarrow$ $L^p\left(\mathcal{O} ; \mathbb{R}^n\right)$ for $p=p(a) \in(1,2]$,  whenever $\mathcal{O}$ is contained in the boundary of a Lipschitz $\Omega \subset \mathbb{R}_{+}^{m+1}$.

\subsection{Intrinsic s-harmonic maps, monotonicity and $\varepsilon$-regularity}\label{sec: intrinsic s-harmonic maps}

We assume, after translating \(N\) if necessary, that \(0 \in N\). In view of Lemma~\ref{lemma:Weighted homogeneous Sobolev spaces}, we always assume \(m \geq 2\). For \(m \geq 3\) we take \(s \in (0,1)\), and for \(m = 2\) we take \(s \in (0, 2/3)\). We define
\[
\dot{W}_a^{1,2}\left(\mathbb{R}_{+}^{m+1}; N\right) = \left\{ v \in \dot{W}_a^{1,2}\left(\mathbb{R}_{+}^{m+1}; \mathbb{R}^n\right) : v(x) \in N \text{ for almost every } x \in \mathbb{R}_{+}^{m+1} \right\},
\]
and
\[
I^a(u, \mathcal{O}) = \inf \left\{ E^a(v) : v \in \dot{W}_a^{1,2}\left(\mathbb{R}_{+}^{m+1}; N\right),\ \left.T\right|_{\mathcal{O}} v = u \right\}
\]
for \(u \in \left.T\right|_{\mathcal{O}}\left( \dot{W}_a^{1,2}\left(\mathbb{R}_{+}^{m+1}; N\right) \right)\). Recall from the introduction that \(I^a\) serves as an intrinsic energy for \(u\); it is independent of the choice of isometric embedding of \(N\) into Euclidean space. Moreover, \(I^a\) coincides with the square of the fractional Sobolev norm \(\|u\|_{\dot{H}^{s}(\partial \mathbb{R}_{+}^{m+1}; \mathbb{R}^n)}\) when \(N = \mathbb{R}^n\) and \(\mathcal{O} = \partial \mathbb{R}_{+}^{m+1}\).

For every $ u \in \left.T\right|_{\mathcal{O}}\left(\dot{W}_a^{1,2}\left(\mathbb{R}_{+}^{m+1} ; N\right)\right)$, by applying a direct method of calculus of variations, we can prove that there exists $v \in \dot{W}_a^{1,2}\left(\mathbb{R}_{+}^{m+1} ; N\right)$ with $\left.T\right|_{\mathcal{O}} v=u$, such that $I^a(u,\mathcal{O})=E^a(v)$. It is worth noting that $v$ may not be unique. Such a minimizer $v$ is referred to henceforth as a minimal harmonic map. This is due to the fact that  $v$ can be viewed as a minimal harmonic map from the Riemannian manifold $(\mathbb{R}_{+}^{m+1},g=z^\alpha \de)$ into $N$, where $\alpha=2a/(m-1)$ and $\de$ is the standard Euclidean metric: the Dirichlet energy on $(\mathbb{R}_{+}^{m+1},g)$ is precisely $E^a$.

In order to deduce regularity  for minimizing intrinsic $s$-harmonic map (see Definition \ref{def:u minimising}), we will make use of these minimal harmonic extensions. Hence we introduce the following closely related concept.

	\begin{definition}\label{def:v minimising}
 Let $v \in \dot{W}_a^{1,2}\left(\mathbb{R}_{+}^{m+1} ; N\right)$. We say that $v$ is $E^a$ minimizing, or energy minimizing, in $\mathbb{R}_{+}^{m+1}$ relative to $\mathcal{O} \subset \partial \mathbb{R}_{+}^{m+1}$, if for every compact $K \subset \mathbb{R}^{m+1}$ with $K \cap \partial \mathbb{R}_{+}^{m+1} \subset \mathcal{O}$ and for every $w \in \dot{W}_a^{1,2}\left(\mathbb{R}_{+}^{m+1} ; N\right)$ with $\left.v\right|_{\mathbb{R}_{+}^{m+1} \backslash K}=\left.w\right|_{\mathbb{R}_{+}^{m+1} \backslash K}$ we have $E^a(v) \leq E^a(w)$.
\end{definition}

%
%
%

Roberts  \cite[Lemma 3.7]{Roberts-18-CV}  proved the following connection between local minimisers of $I^a$ and minimisers of $E^a$ relative to $\mathcal{O}$.

\begin{lemma}\label{lemma2.7}
 Suppose $u \in \left.T\right|_{\mathcal{O}}\left(\dot{W}_a^{1,2}\left(\mathbb{R}_{+}^{m+1} ; N\right)\right)$ locally minimises $I^a$ in the sense of Definition \ref{def:u minimising} and fix a minimal harmonic map $v \in \dot{W}_a^{1,2}\left(\mathbb{R}_{+}^{m+1} ; N\right)$ with $\left.T\right|_{\mathcal{O}}v=u$. Then $v$ is a minimiser of $E^a$ relative to $\mathcal{O}$.
\end{lemma}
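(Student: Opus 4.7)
The plan is a short contradiction argument that interprets any bulk competitor for $v$ as a boundary competitor for $u$.

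Assume for contradiction that $v$ fails the minimization property of Definition \ref{def:v minimising}. Then there exist a compact set $K \subset \mathbb{R}^{m+1}$ with $K' := K \cap \partial \mathbb{R}_{+}^{m+1} \subset \mathcal{O}$, together with a map $w \in \dot{W}_a^{1,2}(\mathbb{R}_{+}^{m+1}; N)$ such that $w \equiv v$ on $\mathbb{R}_{+}^{m+1} \setminus K$ and $E^a(w) < E^a(v)$. The set $K'$ is then a compact subset of $\mathcal{O}$. Set $\tilde u := T|_{\mathcal{O}} w \in H^{s}_{\mathrm{int}}(\mathcal{O}, N)$; this is a legitimate boundary trace because $w$ lies in $\dot{W}_a^{1,2}(\mathbb{R}_{+}^{m+1}; N)$.

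The next step is the trace-locality claim: $\tilde u = u$ on $\mathcal{O} \setminus K'$. Indeed, every point $x \in \mathcal{O} \setminus K'$ admits a half-ball $B_r^{+}(\mathbf{x}) \subset \mathbb{R}_{+}^{m+1} \setminus K$ with $\mathbf{x}=(x,0)$ (by compactness of $K$ and the fact that $x \notin K'$). On each such half-ball, $w$ and $v$ coincide in $W_a^{1,2}$, so their traces on $\partial^0 B_r^{+}(\mathbf{x})$ coincide by the continuity and locality of the boundary trace operator discussed in Section \ref{sec: Preliminaries}. Covering $\mathcal{O} \setminus K'$ by such half-balls yields $\tilde u = u$ a.e.\ on $\mathcal{O} \setminus K'$, so $\tilde u$ is an admissible competitor for $u$ in Definition \ref{def:u minimising}.

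Finally, chain the two minimizing properties. Since $v$ is a minimal harmonic map with $T|_{\mathcal{O}}v = u$, we have $I^a(u,\mathcal{O}) = E^a(v)$. Local minimality of $u$ against the competitor $\tilde u$ gives $I^a(u,\mathcal{O}) \le I^a(\tilde u,\mathcal{O})$, and the definition of $I^a$ applied to the admissible extension $w$ of $\tilde u$ yields $I^a(\tilde u,\mathcal{O}) \le E^a(w)$. Combining,
\[
E^a(v) = I^a(u,\mathcal{O}) \le I^a(\tilde u,\mathcal{O}) \le E^a(w) < E^a(v),
\]
a contradiction.

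The only delicate point is the trace-locality step; everything else is a direct unwinding of the two minimality definitions. I expect this to be the main obstacle only insofar as one must be careful that $K \cap \partial \mathbb{R}_{+}^{m+1} \subset \mathcal{O}$ really lets one certify $\tilde u$ as a compactly-supported perturbation of $u$ inside $\mathcal{O}$, so that Definition \ref{def:u minimising} is applicable. Once the half-ball covering argument is in place, the rest is immediate.
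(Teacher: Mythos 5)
Your argument is correct and is essentially the standard one: the paper itself gives no proof, deferring to Roberts \cite[Lemma 3.7]{Roberts-18-CV}, whose argument is precisely this chaining of a bulk competitor $w$ for $v$ into the boundary competitor $\tilde u = T|_{\mathcal{O}}w$ for $u$ via $E^a(v)=I^a(u,\mathcal{O})\le I^a(\tilde u,\mathcal{O})\le E^a(w)$. The trace-locality step you flag is handled correctly by the half-ball covering, since the trace operator of Section \ref{sec: Preliminaries} is built from the classical local $W^{1,p}$ trace.
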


Obviously, minimisers of $E^a$ relative to $\mathcal{O}$ are critical points of $E^a$ with respect to outer and inner variations, including those which vary their boundary data in $\mathcal{O}$. As a consequence, the following monotonicity formula holds, as proved by  Roberts \cite[Lemma 4.5]{Roberts-18-CV}, which plays a central role in the regularity theory of minimizing intrinsic $s$-harmonic maps. As that of harmonic maps, we remark that this monotonicity formula also holds for stationary intrinsic harmonic maps, which is discussed in detail in Moser and Roberts \cite{Moser-Roberts-2022} and Roberts \cite{Roberts-18-CV}.

\begin{theorem}[Monotonicity formula]\label{monotonicity formula} Suppose $v \in \dot{W}_a^{1,2}\left(\mathbb{R}_{+}^{m+1} ; N\right)$ is a minimizer of $E^a$ relative to $\mathcal{O}$. Suppose $\mathbf{y}=\left(y, 0\right) \in \mathcal{O} \times\{0\}$  and consider $B_R^{+}(\mathbf{y})$ with $\overline{\partial^0 B_R^{+}(\mathbf{y})} \subset \mathcal{O}$. Then the normalized energy function
$$
r \in\left(0, R\right] \mapsto \mathbf{\Theta}_s\left(v, \mathbf{y}, r\right):=r^{2s-m} \int_{B_r^{+}(\mathbf{y})} z^a|\nabla v|^2 \mathrm{~d} \mathbf{x}
$$
is nondecreasing. Moreover,
\begin{equation}\label{monotonicity formula}
\begin{aligned}
\boldsymbol{\Theta}_s\left(v, \mathbf{y}, r\right)-\boldsymbol{\Theta}_s\left(v, \mathbf{y}, \rho\right)=2 \int_{B_r^{+}(\mathbf{y}) \backslash B_\rho^{+}(\mathbf{y})} z^a \frac{|(\mathbf{x}-\mathbf{y}) \cdot \nabla v|^2}{|\mathbf{x}-\mathbf{y}|^{m+2-2s}} \mathrm{d} \mathbf{x}
\end{aligned}
\end{equation}
whenever $0 \leq \rho \leq r \leq R$.
\end{theorem}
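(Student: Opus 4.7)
The strategy is to derive the formula as a Pohozaev-type identity obtained by testing the minimality of $v$ against \emph{inner} (domain) variations of $\mathbb{R}_+^{m+1}$, and then to specialize to a radially expanding vector field centered at $\mathbf{y}$. Since $v$ is an $E^a$-minimiser relative to $\mathcal{O}$, for every $X \in C_c^\infty(\mathbb{R}^{m+1}; \mathbb{R}^{m+1})$ with $X^{m+1}(x,0) = 0$ and whose boundary trace lies in $\mathcal{O}$, the associated flow $\phi_t$ preserves $\mathbb{R}_+^{m+1}$ and fixes the boundary data outside a compact subset of $\mathcal{O}$; thus $v_t := v\circ \phi_t^{-1}$ is an admissible competitor. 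Differentiating $E^a(v_t)$ at $t=0$ via change of variables, using $\tfrac{d}{dt}\big|_{t=0} z^a(\phi_t) = a\, z^{a-1} X^{m+1}$, yields the stationarity identity
\begin{equation*}
    \int_{\mathbb{R}_+^{m+1}} \Bigl[ T_{ij}(v)\, \partial_j X^i + \tfrac{a}{2}\, z^{a-1} X^{m+1} |\nabla v|^2 \Bigr]\, \D \mathbf{x} = 0,
\end{equation*}
where $T_{ij}(v) := \tfrac{1}{2} z^a |\nabla v|^2 \delta_{ij} - z^a \partial_i v \cdot \partial_j v$ is the stress-energy tensor associated with $E^a$. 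The weight-derivative term, absent in the classical harmonic-map setting, is exactly what produces the critical exponent $2s-m$ in $\mathbf{\Theta}_s$.

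I would next choose $X(\mathbf{x}) = \eta_r(\mathbf{x})(\mathbf{x}-\mathbf{y})$ with $\eta_r(\mathbf{x}) = \phi(|\mathbf{x}-\mathbf{y}|/r)$, where $\phi \in C_c^\infty([0,\infty))$ is nonincreasing, $\phi \equiv 1$ on $[0,1]$ and $\phi \equiv 0$ on $[1+\epsilon,\infty)$. Since $\mathbf{y}=(y,0)$, we have $X^{m+1}(x,0)=0$, so $X$ is admissible. Using $\partial_j X^i = \eta_r \delta_{ij} + (x^i-y^i)\partial_j \eta_r$, the trace $T_{ii} = \tfrac{m-1}{2} z^a |\nabla v|^2$, and $z^{a-1}X^{m+1} = z^a \eta_r$, the stationarity identity reduces to
\begin{equation*}
    \tfrac{m-1+a}{2}\int_{\mathbb{R}_+^{m+1}} \eta_r\, z^a |\nabla v|^2\, \D\mathbf{x} + \int_{\mathbb{R}_+^{m+1}} (x^i-y^i)\, T_{ij}(v)\,\partial_j \eta_r\, \D\mathbf{x} = 0.
\end{equation*}
Passing $\epsilon \to 0$, polar coordinates concentrate $\phi'(\cdot/r)$ at $\{|\mathbf{x}-\mathbf{y}|=r\}$ and convert the second term into a surface integral on $\partial^+ B_r^+(\mathbf{y})$; no contribution arises on $\partial^0 B_r^+(\mathbf{y})$ since $\eta_r$ is smooth across $\{z=0\}$. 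Using $(\mathbf{x}-\mathbf{y}) = r\nu$ on $\partial^+ B_r^+(\mathbf{y})$, the identity $\nu^i \nu^j T_{ij} = \tfrac{1}{2} z^a |\nabla v|^2 - z^a |\partial_r v|^2$, and $m-1+a = m-2s$, one obtains, for a.e. $r \in (0,R]$,
\begin{equation*}
    \tfrac{r}{2} E'(r) - \tfrac{m-2s}{2} E(r) = r \int_{\partial^+ B_r^+(\mathbf{y})} z^a |\partial_r v|^2\, \D A,
\end{equation*}
where $E(r) := \int_{B_r^+(\mathbf{y})} z^a |\nabla v|^2\,\D\mathbf{x}$ and $E'(r) = \int_{\partial^+ B_r^+(\mathbf{y})} z^a |\nabla v|^2\,\D A$ by the coarea formula.

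The final step is to rewrite this as
\begin{equation*}
    \tfrac{\D}{\D r}\!\bigl[ r^{2s-m} E(r) \bigr] = 2\, r^{2s-m}\int_{\partial^+ B_r^+(\mathbf{y})} z^a |\partial_r v|^2\, \D A \geq 0,
\end{equation*}
which immediately yields monotonicity of $\mathbf{\Theta}_s(v,\mathbf{y},\cdot)$. Integrating from $\rho$ to $r$ and passing back to a volume integral over $B_r^+(\mathbf{y})\setminus B_\rho^+(\mathbf{y})$ via polar coordinates, together with $|\partial_r v|^2 = |(\mathbf{x}-\mathbf{y})\cdot\nabla v|^2 / |\mathbf{x}-\mathbf{y}|^2$, produces the stated identity. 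The principal technical obstacle is the rigorous cutoff passage when $s > 1/2$, where $a = 1-2s < 0$ and $z^a$ is singular at $\{z=0\}$: one must ensure that $z^a |\nabla v|^2$ admits a genuine surface trace on $\partial^+ B_r^+(\mathbf{y})$ for almost every $r$ and that the distributional limit of $\nabla \eta_r$ yields the expected surface measure on $\partial^+ B_r^+(\mathbf{y})$ rather than on all of $\partial B_r^+(\mathbf{y})$. Both are handled by selecting good radii via Fubini and the coarea formula, combined with the density results and weighted-trace estimates of \cite{Roberts-18-CV}.
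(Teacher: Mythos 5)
Your proposal is correct: the paper does not prove this theorem itself but quotes it from Roberts \cite[Lemma 4.5]{Roberts-18-CV}, whose derivation is exactly the inner-variation/Pohozaev route you follow (stationarity identity with the weighted stress--energy tensor plus the $\tfrac{a}{2}z^{a-1}X^{m+1}|\nabla v|^2$ term, radial vector field $X=\eta_r(\mathbf{x})(\mathbf{x}-\mathbf{y})$ tangential to $\{z=0\}$, and the algebra $m-1+a=m-2s$ producing the exponent $2s-m$). Your computations, including the passage $\tfrac{\D}{\D r}[r^{2s-m}E(r)]=2r^{2s-m}\int_{\partial^+B_r^+(\mathbf{y})}z^a|\partial_r v|^2\,\D A$ and the coarea reconversion to the volume integral with kernel $|\mathbf{x}-\mathbf{y}|^{-(m+2-2s)}$, check out.
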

Consequently, there always exists the limit
\[
\boldsymbol{\Xi}_s(v, \mathbf{y}):=\lim_{r\to 0}\boldsymbol{\Theta}_s\left(v, \mathbf{y}, r\right).
\]

Finally, let us recall the the following partial Lipschitz regularity theorem for minimal harmonic maps established by \cite{Roberts-18-CV} which is based on the monotonicity formula above.

\begin{theorem}[Partial Regularity] \label{thm: partial regularity}
 Let $s \in(0,1)$ for $m \geq 3$, and let $s \in\left(0, 2/3\right)$ for $m=2$. Let $v \in \dot{W}_a^{1,2}\left(\mathbb{R}_{+}^{m+1} ; N\right)$ be a minimiser of $E^a$ relative to $\mathcal{O}$, let $x_0 \in \partial \mathbb{R}_{+}^{m+1}$ and $B_R^{+}\left(x_0\right)$ be a half-ball with $R \leq 1$ and $\overline{\partial^0 B_R^{+}\left(x_0\right)} \subset \mathcal{O}$. There exists $\varepsilon=\varepsilon(m, N, s)>0$ such that the following holds. If
 $$R^{2s-m} \int_{B_R^{+}\left(x_0\right)} z^a|\nabla v|^2 \mathrm{~d} x \leq \varepsilon,$$
  then there are $\theta,\gamma \in(0,1)$ depending only on $m,n,s$, such that $v \in C^{0, \gamma} \left(\overline{B_{\theta R}^{+}\left(x_0\right)} ; N\right)$. Furthermore, for every $l \in \mathbb{N}$ there is a $\theta=\theta(m, N, s, l) \in(0,1)$ such that $D^{\alpha^{\prime}} v \in C^{0, 1}\left(\overline{B_{\theta R}^{+}\left(x_0\right)} ; \mathbb{R}^n\right)$ for every $\alpha^{\prime} \in \mathbb{N}_0^{m+1}$ with $\left|\alpha^{\prime}\right| \leq l$ and $\alpha_{m+1}^{\prime}=0$.
\end{theorem}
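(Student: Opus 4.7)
The plan is the classical $\varepsilon$-regularity scheme: first I would establish uniform H\"older continuity of $v$ on a smaller half-ball via a Campanato-type energy decay, and then bootstrap to the claimed higher tangential regularity using the Euler--Lagrange system. The monotonicity formula (Theorem \ref{monotonicity formula}) is the source of the smallness that will be exploited: if $\mathbf{\Theta}_s(v,x_0,R)\le\varepsilon$, then for every base point $\mathbf{y}\in\overline{\partial^0 B^+_{R/2}(x_0)}$ and every $r\le R/2$ one has $\mathbf{\Theta}_s(v,\mathbf{y},r)\le C(m,s)\,\varepsilon$ by a standard comparison of balls. This reduces the task to proving, from a uniform smallness hypothesis at every base point and every scale, a Morrey-type decay for the weighted Dirichlet energy.

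The heart of the argument is an $\varepsilon$-decay lemma: there exist $\varepsilon_1\in(0,\varepsilon]$, $\theta_1\in(0,1)$ and $\tau\in(0,1/2)$ such that $\mathbf{\Theta}_s(v,\mathbf{y},r)\le\varepsilon_1$ implies $\mathbf{\Theta}_s(v,\mathbf{y},\theta_1 r)\le\tau\,\mathbf{\Theta}_s(v,\mathbf{y},r)$. I would prove this by contradiction, taking a sequence of $E^a$-minimisers $v_j$ and radii $r_j$ with $\mathbf{\Theta}_s(v_j,\mathbf{y}_j,r_j)=\epsilon_j^2\to 0$ for which the decay fails. Form the rescalings
\[
\tilde v_j(\mathbf{x})=\epsilon_j^{-1}\bigl(v_j(\mathbf{y}_j+r_j\mathbf{x})-\bar v_j\bigr),\qquad\mathbf{x}\in B_1^+,
\]
with $\bar v_j$ a suitable mean of $v_j$ on $B_{r_j}^+(\mathbf{y}_j)$, so that $E^a(\tilde v_j,B_1^+)=O(1)$. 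Lemma \ref{lemma:Compactness of the embedding} yields a limit $\tilde v_\infty\in W_a^{1,2}(B_1^+;\mathbb{R}^n)$, and the constraint $v_j\in N$ linearises in the limit to a tangency condition $\tilde v_\infty(x,0)\in T_{p_\infty}N$ for $p_\infty=\lim\bar v_j\in N$. Passing the minimising property to the limit, with the help of the modified Luckhaus comparison lemma of \cite{Roberts-18-CV}, forces $\tilde v_\infty$ to satisfy the linear degenerate system $\operatorname{div}(z^a\nabla\tilde v_\infty)=0$ in $B_1^+$ with a linear conormal (``free-boundary'') condition on $\partial^0 B_1^+$. The boundary regularity theory for this linear Caffarelli--Silvestre operator then produces the desired decay for $\tilde v_\infty$, contradicting the failure of decay along the sequence.

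Iterating this decay lemma gives $\mathbf{\Theta}_s(v,\mathbf{y},r)\le C(r/R)^{2\gamma}$ for some $\gamma\in(0,1)$, uniformly in $\mathbf{y}$ in a neighbourhood of $x_0$. A Campanato-type characterisation in the weighted space $W_a^{1,2}$ converts this Morrey decay into a pointwise estimate, yielding $v\in C^{0,\gamma}(\overline{B^+_{\theta R}(x_0)};N)$ for some $\theta=\theta(m,N,s)\in(0,1)$. For the higher regularity statement, H\"older continuity implies $v$ maps a neighbourhood of $x_0$ into a small geodesic ball of $N$, so that the Euler--Lagrange system reads
\[
-\operatorname{div}(z^a\nabla v)=z^a A(v)(\nabla v,\nabla v)\quad\text{in }B_R^+(x_0),
\]
with a conormal free-boundary condition on $\partial^0 B_R^+(x_0)$, where $A$ is the second fundamental form of $N$. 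Since the weight $z^a$ is independent of the tangential variables $x_1,\dots,x_m$, differentiating in these directions produces an equation of the same structure for any tangential derivative $\partial_{x_i}v$. Iterating this observation $l$ times and invoking the boundary Schauder estimates for the degenerate operator $\operatorname{div}(z^a\nabla\cdot)$ delivers $D^{\alpha'}v\in C^{0,1}(\overline{B^+_{\theta R}(x_0)};\mathbb{R}^n)$ for every multi-index $\alpha'$ with $|\alpha'|\le l$ and $\alpha'_{m+1}=0$, after shrinking $\theta$ finitely many times.

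The main obstacle is the blow-up/compactness step. The homogeneous space $\dot W^{1,2}_a$ carries only $L^p$ boundary traces with $p<2$ (Lemma \ref{lemma:function space}), and to propagate both the $N$-valued constraint and the minimising property to the limit, starting from a sequence with vanishing normalised energy, one needs strong boundary convergence of the rescaled maps; this is precisely what the modified Luckhaus lemma on the weighted half-space provides. The restriction to $s<2/3$ when $m=2$ enters at the very first step, through the availability of Lemma \ref{lemma:Weighted homogeneous Sobolev spaces} and the passage from local minimisers of $I^a$ to minimisers of $E^a$ via Lemma \ref{lemma2.7}, and cannot be relaxed without a separate analysis.
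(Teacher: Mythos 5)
The paper does not actually prove Theorem \ref{thm: partial regularity}; it is quoted from Roberts \cite{Roberts-18-CV}, so the only meaningful comparison is with Roberts' original argument, and your outline is essentially that argument: smallness propagated by the monotonicity formula, an energy-decay step in which comparison maps built from the modified Luckhaus lemma and the linear theory for $\operatorname{div}(z^a\nabla\cdot)$ drive a Morrey/Campanato iteration yielding H\"older continuity, and then higher tangential regularity by differentiating the Euler--Lagrange system $\operatorname{div}(z^a\nabla v)+z^aA(v)(\nabla v,\nabla v)=0$ with conormal condition on $\partial^0$ in the directions $x_1,\dots,x_m$, under which the weight is invariant. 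Two points in your write-up need correcting. First, $v$ is $N$-valued almost everywhere in the half-space, not merely on its boundary trace; hence the linearised constraint in the blow-up is $\tilde v_\infty(\mathbf{x})\in T_{p_\infty}N$ for a.e.\ $\mathbf{x}\in B_1^+$, not only at $z=0$, and the contradiction in the decay lemma needs more than weak convergence plus the limit equation: you must project $\bar v_j+\epsilon_j\tilde v_\infty$ back onto $N$ and glue it to $v_j$ across a transition annulus via Lemma \ref{lemma:modified Luckhaus} (as in the proof of Theorem \ref{thm: compactness}) to get upper semicontinuity of the rescaled energy at the smaller scale; this is the step where minimality is actually used, and your sketch only gestures at it. Second, Lemma \ref{lemma2.7} plays no role here, since $E^a$-minimality relative to $\mathcal{O}$ is a hypothesis of the theorem; the restriction $s<2/3$ when $m=2$ enters only through the function-space and trace results (Lemma \ref{lemma:Weighted homogeneous Sobolev spaces}), exactly as in \cite{Roberts-18-CV}.
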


This theorem with the standard covering arguments leads to the partial regularity result in Theorem \ref{thm:1.1}. In particular, the set $\Sigma$ in Theorem \ref{thm:1.1} is characterized by
\begin{equation}\label{Sigma}
  \Sigma= \{ y\in \mathcal{O} : \boldsymbol{\Xi}_s(v, \mathbf{y})>\varepsilon, \mathbf{y}=(y,0)\}=\operatorname{sing}(v)\cap \mathcal{O}.
\end{equation}
Note that since $v$ is an upper space extension of $u$, it is quite possible that $\Sigma \supset \operatorname{sing}(u)$, where $\operatorname{sing}(u)$ is defined as in \eqref{def: singular set}.

\section{Compactness and tangent maps}\label{sec: compactness}

To apply the quantitative stratificationi theory of Cheeger-Naber \cite{Cheeger-Naber-2013-CPAM}, blow-up argument and compactness theory play a fundamental role. Hence we devote this section to the proof of the compactness Theorem \ref{thm: compactness} by following the approach of Luckhaus \cite{Luckhaus-1993} on harmonic maps, see also Simon \cite[Chapter 2]{Simon-1996}. In this respect, Roberts \cite[Section 4.5]{Roberts-18-CV} have already established a Luckhaus type lemma. First we introduce some necessary notations.

Let $\mathbb{S}^m \subset \mathbb{R}^{m+1}$ denote the $m$-dimensional unit sphere centered at the origin and equipped with the metric induced by the Euclidean metric on $\mathbb{R}^{m+1}$. Define $\mathbb{S}_{+}^m = \mathbb{S}^m \cap \mathbb{R}_{+}^{m+1}$ with the metric induced from $\mathbb{S}^m$. We let $\omega$ denote a point in $\mathbb{S}^m \subset \mathbb{R}^{m+1}$ or $\mathbb{S}_{+}^m \subset \mathbb{R}_{+}^{m+1}$ and write $\mathrm{d} \omega$ for the volume element corresponding to the induced metric.

\begin{definition}
    Let $\varepsilon > 0$ and $\rho > 0$. Suppose $S = \rho \mathbb{S}^m$ and $V_{\varepsilon} = B_{\rho+\varepsilon}(0) \setminus B_{\rho-\varepsilon}(0)$, or $S = \rho \mathbb{S}_{+}^m$ and $V_{\varepsilon} = B_{\rho+\varepsilon}^{+}(0) \setminus B_{\rho-\varepsilon}^{+}(0)$. An element $v \in L_a^2(S; \mathbb{R}^n)$ is said to be in $W_a^{1,2}(S; \mathbb{R}^n)$ if the map $v(\rho \frac{x}{|x|}) \in W_a^{1,2}(V_{\varepsilon}; \mathbb{R}^n)$ for some $\varepsilon > 0$. An element $v \in L_a^2(S \times [a, b]; \mathbb{R}^n)$, with $a < b$ real numbers, is said to be in $W_a^{1,2}(S \times [a, b]; \mathbb{R}^n)$ if the map $v(\rho \frac{x}{|x|}, s) \in W_a^{1,2}(V_{\varepsilon} \times [a, b]; \mathbb{R}^n)$ for some $\varepsilon > 0$. If $N \subset \mathbb{R}^n$ is compact, we say $v$ is in $W_a^{1,2}(S; N)$ or $W_a^{1,2}(S \times [a, b]; N)$ if $v$ is in $W_a^{1,2}(S; \mathbb{R}^n)$ or $W_a^{1,2}(S \times [a, b]; \mathbb{R}^n)$, respectively, and $v(x) \in N$ for almost every $x \in S$.
\end{definition}

We now state the modified Luckhaus lemma of Roberts \cite[Lemma 4.16]{Roberts-18-CV}.

\begin{lemma}\label{lemma:modified Luckhaus}
    Let $m \geq 2$ and $s \in (0,1)$. Let $N$ be a compact subset of $\mathbb{R}^n$, and suppose $u, v \in W_a^{1,2}(\mathbb{S}_{+}^m; N)$. Then for all $\varepsilon \in (0,1)$, there is a $w \in W_a^{1,2}(\mathbb{S}_{+}^m \times [0, \varepsilon]; \mathbb{R}^n)$ such that $w$ agrees with $u$ on $\mathbb{S}_{+}^m \times \{0\}$ and $v$ on $\mathbb{S}_{+}^m \times \{\varepsilon\}$ in the sense of traces and satisfies the following. Let $\bar{D}$ be the gradient on $\mathbb{S}_{+}^m \times [0, \varepsilon]$ and $D$ the gradient on $\mathbb{S}_{+}^m$. Then $w = w(\omega, l)$ satisfies
    \begin{equation}\label{eq:Luckhaus1}
        \begin{aligned}
            & \int_{\mathbb{S}_{+}^m \times [0, \varepsilon]} \omega_{m+1}^a |\bar{D} w|^2  \mathrm{d}\omega  \mathrm{d}l \\
            & \quad \leq C_1 \varepsilon \int_{\mathbb{S}_{+}^m} \omega_{m+1}^a (|D u|^2 + |D v|^2)  \mathrm{d}\omega + \frac{C_1}{\varepsilon} \int_{\mathbb{S}_{+}^m} \omega_{m+1}^a |u - v|^2  \mathrm{d}\omega,
        \end{aligned}
    \end{equation}
    where $C_1 = C_1(m, s)$. Furthermore, $w$ satisfies
    \begin{equation}\label{eq:Luckhaus2}
        \begin{aligned}
            & \operatorname{dist}^2(w(\omega, l), N) \\
            & \leq \frac{C_2}{\varepsilon^{m + \frac{a}{2} + \frac{|a|}{2}}} \left( \int_{\mathbb{S}_{+}^m} \omega_{m+1}^a (|D u|^2 + |D v|^2)  \mathrm{d}\omega \right)^{\frac{1}{q}} \left( \int_{\mathbb{S}_{+}^m} \omega_{m+1}^a |u - v|^2  \mathrm{d}\omega \right)^{1 - \frac{1}{q}} \\
            & \quad + \frac{C_2}{\varepsilon^{m + 1 + \frac{a}{2} + \frac{|a|}{2}}} \int_{\mathbb{S}_{+}^m} \omega_{m+1}^a |u - v|^2  \mathrm{d}\omega,
        \end{aligned}
    \end{equation}
    for almost every $(\omega, l) \in \mathbb{S}_{+}^m \times [0, \varepsilon]$, where $C_2 = C_2(m, s)$ and $q$ satisfies the following: If $s \in [1/2, 1)$, then \eqref{eq:Luckhaus2} holds for $q = 2$; if $s \in (0, 1/2)$, then for any $p \in (1, \frac{1}{1-s})$, there exists $q \in \{2, p\}$ such that \eqref{eq:Luckhaus2} holds.
\end{lemma}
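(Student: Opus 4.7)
The plan is to adapt the classical Luckhaus interpolation scheme (see Simon \cite[Chapter 2]{Simon-1996}) to the upper hemisphere $\mathbb{S}_+^m$ together with the degenerate/singular weight $\omega_{m+1}^a$. The target is an explicit $w$ on the cylinder $\mathbb{S}_+^m \times [0,\varepsilon]$ whose weighted Dirichlet energy is controlled by $\varepsilon$ times the energies of $u,v$ plus $\varepsilon^{-1}$ times a weighted $L^2$-defect, and whose distance to $N$ can be estimated pointwise.

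First I would fix a finite atlas of bi-Lipschitz charts for $\mathbb{S}_+^m$ and, on each chart, introduce a grid of small cubes of side-length $\sigma \sim \varepsilon$. The interpolation is built on the product cell complex (grid cubes)$\times[0,\varepsilon]$ by a standard skeleton-by-skeleton construction: on $0$-cells crossed with $\{0,\varepsilon\}$ take the traces of $u$ and $v$, on $1$-cells in the $l$-direction linearly interpolate, and on higher-dimensional cells extend homogeneously from the center. The per-cell energy of this construction obeys the unweighted Luckhaus bound
\begin{equation*}
\int_{F \times [0,\varepsilon]} |\bar D w|^2 \,\mathrm{d}\omega\,\mathrm{d}l \le C\Bigl(\varepsilon \int_{F} (|Du|^2+|Dv|^2)\,\mathrm{d}\omega + \varepsilon^{-1}\int_{F} |u-v|^2 \,\mathrm{d}\omega\Bigr),
\end{equation*}
which, after multiplying by the essentially constant weight $\omega_{m+1}^a$ on each cell and summing, yields \eqref{eq:Luckhaus1}; the standard Fubini and averaging-over-grid-translations argument selects a favourable grid position. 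The distance bound \eqref{eq:Luckhaus2} follows because on each $(k+1)$-cell the homogeneous extension is controlled pointwise, via Morrey's embedding and an interpolation inequality between $W^{1,q}$ and $L^2$, by a product of a small power of the Dirichlet energy of $u,v$ and a large power of the $L^2$-defect $\int |u-v|^2$; since $u,v \in N$, this pointwise control on $|w-u|$ (or $|w-v|$) translates directly into a bound on $\mathrm{dist}^2(w,N)$.

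The main obstacle is the weight $\omega_{m+1}^a$ near the equator $\partial\mathbb{S}_+^m$, where it degenerates for $a>0$ (i.e.\ $s<1/2$) and is singular for $a<0$ (i.e.\ $s>1/2$). I would handle this by partitioning a thin collar of $\partial\mathbb{S}_+^m$ into dyadic strips where $\omega_{m+1}\approx 2^{-j}$, meshing each strip with an anisotropic grid of tangential size $\sim \varepsilon$ and normal size $\sim \varepsilon\,2^{-j}$, so that each cell carries an essentially constant weight and the unweighted cell estimates apply at unit scale after rescaling. The second manifestation of the weight appears in the Sobolev exponent $q$ entering \eqref{eq:Luckhaus2}: when $s\in[1/2,1)$ the inclusion $W_a^{1,2}\subset W^{1,2}$ of Lemma \ref{lemma:function space}(2) allows one to take $q=2$, whereas for $s\in(0,1/2)$ only the weaker embedding $W_a^{1,2}\subset W^{1,p}$ for $p<1/(1-s)$ from Lemma \ref{lemma:function space}(3) is available, forcing the dichotomy $q\in\{2,p\}$ stated in the lemma. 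The explicit exponents $\varepsilon^{-(m+a/2+|a|/2)}$ arise simply by bookkeeping the powers of $\varepsilon$ and of the weight in the rescalings of the cells.
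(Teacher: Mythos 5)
First, a point of comparison: the paper does not prove this lemma at all --- it is quoted verbatim from Roberts \cite[Lemma 4.16]{Roberts-18-CV} --- so there is no in-house argument to measure your attempt against, and the assessment has to be made relative to the cited source. Your overall outline (Luckhaus skeleton-by-skeleton interpolation on a cell decomposition of $\mathbb{S}_+^m\times[0,\varepsilon]$, a Fubini/averaging selection of a favourable grid, homogeneous extension into higher cells, and the use of Lemma \ref{lemma:function space} to explain the dichotomy $q\in\{2,p\}$) is the right skeleton and is what the source does.

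The genuine gap is your device for the weight near the equator. Meshing a collar of $\partial\mathbb{S}_+^m$ by dyadic strips $\{\omega_{m+1}\approx 2^{-j}\}$ with anisotropic cells of tangential size $\varepsilon$ and normal size $\varepsilon 2^{-j}$ breaks the two quantitative ingredients of the construction: the degree-zero homogeneous extension is energy-controlled only for cells uniformly bi-Lipschitz to balls, and a cell of aspect ratio $2^{j}$ carries a constant that degrades with $j$; and the per-cell bounds then involve the reciprocal of the smallest cell dimension, so the sum over $j$ of the defect terms is not dominated by $\varepsilon^{-1}\int_{\mathbb{S}_+^m}\omega_{m+1}^a|u-v|^2$. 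Moreover the strips never cover $\{\omega_{m+1}=0\}$ itself, so the construction is incomplete exactly where the weight is problematic. The workable route --- and the one taken in \cite{Roberts-18-CV} --- keeps a single $\varepsilon$-scale mesh, uses that $|z|^a$ with $|a|<1$ is an $A_2$ weight so that weighted Poincar\'e/Sobolev inequalities hold on boundary cells with uniform constants, and passes between weighted and unweighted norms via $\omega_{m+1}^a\ge 1$ when $a\le 0$ and $\omega_{m+1}^a\gtrsim\varepsilon^{a}$ on the relevant cells when $a>0$, combined with the embeddings of Lemma \ref{lemma:function space}. That conversion is precisely where the exponents $\varepsilon^{-(m+\frac a2+\frac{|a|}{2})}$ and $\varepsilon^{-(m+1+\frac a2+\frac{|a|}{2})}$ in \eqref{eq:Luckhaus2} and the restriction $q\in\{2,p\}$ with $p<\frac{1}{1-s}$ come from; your closing remark that these "arise simply by bookkeeping" leaves the only genuinely delicate step of the proof unaddressed.
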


Now we establish a useful corollary of the modified Luckhaus lemma. First, we mention the following important fact about slicing by the radial distance function.

\begin{lemma}
    Suppose $z^a g \geq 0$ is integrable on $B^+_\rho(\mathbf{y})$. By virtue of the general identity
    \[
    \int_{B^+_\rho(\mathbf{y}) \setminus B^+_{\rho/2}(\mathbf{y})} z^a g = \int_{\rho/2}^\rho \left( \int_{\partial^+ B_\sigma(\mathbf{y})} z^a g \right)  \mathrm{d}\sigma,
    \]
    we see that for each $\theta \in (0,1)$,
    \begin{equation}\label{Luckhaus corollary1}
        \int_{\partial^+ B_\sigma(\mathbf{y})} z^a g \leq C \theta^{-1} \rho^{-1} \int_{B^+_\rho(\mathbf{y}) \setminus B^+_{\rho/2}(\mathbf{y})} z^a g
    \end{equation}
    for all $\sigma \in (\frac{\rho}{2}, \rho)$ with the exception of a set of measure $\frac{\theta \rho}{2}$. (Indeed, otherwise the reverse inequality would hold on a set of measure $> \frac{\theta \rho}{2}$, and by integration this would give
    \[
    \int_{B^+_\rho(\mathbf{y}) \setminus B^+_{\rho/2}(\mathbf{y})} z^a g < \int_{\rho/2}^\rho \left( \int_{\partial^+ B_\sigma(\mathbf{y})} z^a g \right)  \mathrm{d}\sigma = \int_{B^+_\rho(\mathbf{y}) \setminus B^+_{\rho/2}(\mathbf{y})} z^a g,
    \]
    a contradiction.)

    Furthermore, if $w \in W_a^{1,2}(\Omega, \mathbb{R})$, then for each ball $\overline{B}^+_\rho(\mathbf{y}) \subset \Omega \subset \mathbb{R}^{m+1}$ with $\mathbf{y} = (y, 0)$ and each $\theta \in (0, 1)$,
    \begin{equation}\label{Luckhaus corollary2}
        \begin{aligned}
            & w_{(\sigma)} \in W_a^{1,2}(\mathbb{S}_{+}^m; \mathbb{R}) \text{ and } \int_{\mathbb{S}_{+}^m} \omega_{m+1}^a |D_\omega w_{(\sigma)}|^2  \mathrm{d}\omega \\
            & \quad \leq \sigma^{1 + 2s - m} \int_{\partial^+ B_\sigma(\mathbf{y})} z^a |\nabla w|^2 \leq 2 \theta^{-1} \rho^{2s - m} \int_{B^+_\rho(\mathbf{y}) \setminus B^+_{\rho/2}(\mathbf{y})} z^a |\nabla w|^2
        \end{aligned}
    \end{equation}
    for all $\sigma \in (\rho/2, \rho)$, with the exception of a set of $\sigma$ of measure $\frac{\theta \rho}{2}$, where $w_{(\sigma)}$ is defined by $w_{(\sigma)}(\omega) \equiv w(\mathbf{y} + \sigma \omega)$, $\omega \in \mathbb{S}_{+}^m$, and where $D_\omega$ means gradient on $\mathbb{S}_{+}^m$.
\end{lemma}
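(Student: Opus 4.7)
My plan is to prove \eqref{Luckhaus corollary1} by a direct pigeonhole / contradiction argument (essentially the one already sketched in the parenthetical), and then to derive \eqref{Luckhaus corollary2} by applying \eqref{Luckhaus corollary1} to $g=|\nabla w|^2$ together with a spherical change of variables that carefully tracks the powers of $\sigma$ coming from the weight $z^a$.

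For \eqref{Luckhaus corollary1}, I would set $I := \int_{B^+_\rho(\mathbf{y})\setminus B^+_{\rho/2}(\mathbf{y})} z^a g$ and define the ``bad'' set
\[
B_\theta := \Bigl\{\sigma\in(\rho/2,\rho) \,:\, \int_{\partial^+ B_\sigma(\mathbf{y})} z^a g > 2\theta^{-1}\rho^{-1}I\Bigr\}.
\]
If $|B_\theta|>\theta\rho/2$, then by the layer-cake identity recorded in the lemma,
\[
I \;=\; \int_{\rho/2}^{\rho}\!\int_{\partial^+ B_\sigma(\mathbf{y})} z^a g\, d\sigma \;\ge\; \int_{B_\theta}\!\int_{\partial^+ B_\sigma(\mathbf{y})} z^a g\, d\sigma \;>\; 2\theta^{-1}\rho^{-1}I\cdot\tfrac{\theta\rho}{2} \;=\; I,
\]
a contradiction; hence $|B_\theta|\le\theta\rho/2$, which is \eqref{Luckhaus corollary1} with $C=2$ (any larger $C$ is also admissible).

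For \eqref{Luckhaus corollary2}, I would parametrize the half-sphere $\partial^+ B_\sigma(\mathbf{y})$ by $\mathbf{x}=\mathbf{y}+\sigma\omega$ with $\omega\in\mathbb{S}^m_+$. Since $\mathbf{y}=(y,0)$, the last coordinate is $z=\sigma\omega_{m+1}$ and the surface area element is $\sigma^m\,d\omega$, so $z^a\,dS = \sigma^{m+a}\omega_{m+1}^a\,d\omega$. Setting $w_{(\sigma)}(\omega):=w(\mathbf{y}+\sigma\omega)$, the chain rule identifies $D_\omega w_{(\sigma)}(\omega)$ with $\sigma$ times the sphere-tangential component of $\nabla w(\mathbf{y}+\sigma\omega)$, so $|D_\omega w_{(\sigma)}|^2\le \sigma^2|\nabla w(\mathbf{y}+\sigma\omega)|^2$. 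Combining these identities and using $a=1-2s$,
\[
\int_{\mathbb{S}^m_+}\omega_{m+1}^a|D_\omega w_{(\sigma)}|^2\,d\omega \;\le\; \sigma^{2-m-a}\int_{\partial^+ B_\sigma(\mathbf{y})} z^a|\nabla w|^2 \;=\;\sigma^{1+2s-m}\int_{\partial^+ B_\sigma(\mathbf{y})} z^a|\nabla w|^2,
\]
which is the first inequality in \eqref{Luckhaus corollary2}; the membership $w_{(\sigma)}\in W_a^{1,2}(\mathbb{S}^m_+;\mathbb{R})$ is automatic from the definition of that space recorded before the lemma once this integral is finite. Finally, applying \eqref{Luckhaus corollary1} to $g=|\nabla w|^2$ bounds $\int_{\partial^+ B_\sigma}z^a|\nabla w|^2$ by $2\theta^{-1}\rho^{-1}\int_{B^+_\rho\setminus B^+_{\rho/2}}z^a|\nabla w|^2$ for all $\sigma\in(\rho/2,\rho)$ off a set of measure $\theta\rho/2$; multiplying through by $\sigma^{1+2s-m}$ and using $\sigma\sim\rho$ to replace $\sigma^{1+2s-m}\rho^{-1}$ by $\rho^{2s-m}$ (at the cost of a harmless factor $2^{|1+2s-m|}$ depending only on $m,s$, absorbed into the overall constant) yields the second inequality.

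There is no genuine obstacle here; both claims are essentially calculus with weights. The only care needed is the bookkeeping of the $\sigma$-powers produced by $z^a\,dS$ and the comparison $\sigma^{1+2s-m}\rho^{-1}\lesssim \rho^{2s-m}$ on $(\rho/2,\rho)$, which uses either $\sigma\le\rho$ or $\sigma\ge\rho/2$ depending on the sign of $1+2s-m$; in the unfavorable range this replaces the literal constant $2$ stated in the lemma by a slightly larger $C=C(m,s)$ of the same form.
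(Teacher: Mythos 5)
Your proposal is correct and follows essentially the same route as the paper: the pigeonhole/contradiction argument for \eqref{Luckhaus corollary1} is exactly the parenthetical sketch in the statement, and the passage to \eqref{Luckhaus corollary2} is the standard spherical change of variables $z^a\,dS=\sigma^{m+a}\omega_{m+1}^a\,d\omega$ with $|D_\omega w_{(\sigma)}|^2\le\sigma^2|\nabla w|^2$, which is all the paper intends. Your observation that the literal constant $2$ in the second inequality should really be a $C(m,s)$ when $1+2s-m<0$ is a fair (and harmless) correction to the statement.
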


\begin{corollary}\label{corollary:Luckhaus corollary}
    Suppose $N$ is a compact subset of $\mathbb{R}^n$ and $\Lambda > 0$. There exist $\delta_0 = \delta_0(m, N, s, \Lambda)$ and $C = C(m, N, s, \Lambda)$ such that the following hold: If $\varepsilon \in (0, \delta_0]$, and if $u, v \in W_a^{1,2}(B^+_{(1+\varepsilon)\rho}(\mathbf{y}) \setminus B^+_\rho(\mathbf{y}); N)$ satisfy
    \[
    \rho^{2s - m} \int_{B^+_{(1+\varepsilon)\rho}(\mathbf{y}) \setminus B^+_\rho(\mathbf{y})} z^a (|\nabla u|^2 + |\nabla v|^2) \leq \Lambda
    \]
    and
    \[
    \varepsilon^{-(m + 1 + \frac{a}{2} + \frac{|a|}{2}) \cdot \frac{q}{q-1}} \rho^{-(m + 2 - 2s)} \int_{B^+_{(1+\varepsilon)\rho}(\mathbf{y}) \setminus B^+_\rho(\mathbf{y})} z^a |u - v|^2 < \delta_0^2,
    \]
    where $\mathbf{y} = (y, 0)$ and $q$ is as in Theorem \ref{lemma:modified Luckhaus}, then there is $w \in W_a^{1,2}(B^+_{(1+\varepsilon)\rho}(\mathbf{y}) \setminus B^+_\rho(\mathbf{y}); N)$ such that $w$ agrees with $u$ on $\partial^+ B_\rho(\mathbf{y})$ and $w$ agrees with $v$ on $\partial^+ B_{(1+\varepsilon)\rho}(\mathbf{y})$ in the sense of traces and satisfies:
    \[
    \begin{aligned}
        & \rho^{2s - m} \int_{B^+_{(1+\varepsilon)\rho}(\mathbf{y}) \setminus B^+_\rho(\mathbf{y})} z^a |\nabla w|^2 \\
        & \quad \leq C \rho^{2s - m} \int_{B^+_{(1+\varepsilon)\rho}(\mathbf{y}) \setminus B^+_\rho(\mathbf{y})} z^a (|\nabla u|^2 + |\nabla v|^2) \\
        & \quad \quad + C \varepsilon^{-2} \rho^{-(m + 2 - 2s)} \int_{B^+_{(1+\varepsilon)\rho}(\mathbf{y}) \setminus B^+_\rho(\mathbf{y})} z^a |u - v|^2.
    \end{aligned}
    \]
\end{corollary}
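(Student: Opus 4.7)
The plan is to reduce the corollary to the modified Luckhaus Lemma~\ref{lemma:modified Luckhaus}, applied on the model cylinder $\mathbb{S}_{+}^{m}\times[0,\tilde\varepsilon]$ with $\tilde\varepsilon\sim\varepsilon\rho$, then to compose the resulting $\mathbb{R}^{n}$-valued interpolant with the nearest-point projection onto $N$, and finally to pull the construction back to the annulus $B_{(1+\varepsilon)\rho}^{+}(\mathbf{y})\setminus B_{\rho}^{+}(\mathbf{y})$ via polar coordinates $\mathbf{x}=\mathbf{y}+(\sigma_{1}+l)\omega$. The $L^{2}$-smallness of $u-v$ and the energy bound $\Lambda$ enter jointly at the projection step.

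\textbf{Step 1 (slicing to select good spheres).} Apply the slicing inequality \eqref{Luckhaus corollary1} with $\theta=\tfrac{1}{2}$ to the three integrands $z^{a}|\nabla u|^{2}$, $z^{a}|\nabla v|^{2}$, and $z^{a}|u-v|^{2}$ on the thin inner shell $r\in(\rho,\rho+\varepsilon\rho/4)$. The union of three bad sets has measure less than the shell width, so one can pick a radius $\sigma_{1}$ for which the sphere integrals over $\partial^{+}B_{\sigma_{1}}(\mathbf{y})$ of each of the three quantities are bounded by a dimensional multiple of $(\varepsilon\rho)^{-1}$ times the corresponding annular integral. A symmetric argument on the outer shell produces $\sigma_{2}\in((1+\varepsilon)\rho-\varepsilon\rho/4,(1+\varepsilon)\rho)$ with the analogous bounds. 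After rescaling to $\mathbb{S}_{+}^{m}$, the sphere data $U(\omega):=u(\mathbf{y}+\sigma_{1}\omega)$ and $V(\omega):=v(\mathbf{y}+\sigma_{2}\omega)$ lie in $W_{a}^{1,2}(\mathbb{S}_{+}^{m};N)$ and satisfy the size and smallness hypotheses of Lemma~\ref{lemma:modified Luckhaus} with $\tilde\varepsilon:=\sigma_{2}-\sigma_{1}$.

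\textbf{Step 2 (Luckhaus interpolation and projection).} On the inner cap $B_{\sigma_{1}}^{+}(\mathbf{y})\setminus B_{\rho}^{+}(\mathbf{y})$ set $w:=u$ and on the outer cap $B_{(1+\varepsilon)\rho}^{+}(\mathbf{y})\setminus B_{\sigma_{2}}^{+}(\mathbf{y})$ set $w:=v$; each contributes only a piece of the respective annular energy. On the middle region apply Lemma~\ref{lemma:modified Luckhaus} to $(U,V)$ with parameter $\tilde\varepsilon$ to obtain an $\mathbb{R}^{n}$-valued interpolant $\tilde w$ on $\mathbb{S}_{+}^{m}\times[0,\tilde\varepsilon]$. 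Estimate \eqref{eq:Luckhaus2}, combined with $E^{a}\le\Lambda$ and the smallness assumption on the $L^{2}$-difference, forces $\operatorname{dist}(\tilde w(\omega,l),N)$ to lie pointwise below the reach of $N$, provided $\delta_{0}=\delta_{0}(m,N,s,\Lambda)$ is chosen small; hence the smooth nearest-point projection $\pi_{N}$ is well-defined and Lipschitz on the image. Set $w:=\pi_{N}\circ\tilde w$ on the middle region, transported back by the polar parametrisation. Since $U,V\in N$, the traces at $\partial^{+}B_{\sigma_{i}}(\mathbf{y})$ of the middle piece coincide with those of the caps, so the three pieces glue into a global $N$-valued $w\in W_{a}^{1,2}$ with the correct boundary traces on $\partial^{+}B_{\rho}(\mathbf{y})$ and $\partial^{+}B_{(1+\varepsilon)\rho}(\mathbf{y})$.

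\textbf{Step 3 (energy bookkeeping and main obstacle).} The cap contributions already match the first term on the right-hand side of the claim. For the middle region, transport \eqref{eq:Luckhaus1} back via the change of variables: its first term reproduces the annular energies of $u$ and $v$, while the factor $\tilde\varepsilon^{-1}$ combined with the extra $(\varepsilon\rho)^{-1}$ generated by slicing $\int|u-v|^{2}$ yields exactly the $\varepsilon^{-2}\rho^{-(m+2-2s)}\int z^{a}|u-v|^{2}$ term. The Lipschitz constant of $\pi_{N}$ near $N$ is absorbed into $C=C(m,N,s,\Lambda)$. The delicate point of the argument is the projection step: $\delta_{0}$ must be calibrated so that the right-hand side of \eqref{eq:Luckhaus2} — whose first summand is of mixed form $\Lambda^{1/q}\|u-v\|^{2-2/q}$ weighted by the $\varepsilon^{-(m+1+a/2+|a|/2)\cdot q/(q-1)}$ factor that motivates the precise smallness assumption in the statement — stays strictly below the reach of $N$ for every admissible $q$, i.e.\ $q=2$ when $s\in[1/2,1)$ and $q\in\{2,p\}$ when $s\in(0,1/2)$. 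All remaining estimates are direct consequences of the slicing inequalities \eqref{Luckhaus corollary1}--\eqref{Luckhaus corollary2} and of Lemma~\ref{lemma:modified Luckhaus}.
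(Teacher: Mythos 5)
Your overall architecture (slice to find good spheres, apply Lemma~\ref{lemma:modified Luckhaus}, project onto $N$, glue) is the right one, but Step~1/Step~2 contain a genuine gap: you feed the Luckhaus lemma the traces $U(\omega)=u(\mathbf{y}+\sigma_1\omega)$ and $V(\omega)=v(\mathbf{y}+\sigma_2\omega)$ taken on \emph{two different spheres}. The quantity that must be small for the projection step is $\int_{\mathbb{S}_+^m}\omega_{m+1}^a|U-V|^2\,\mathrm{d}\omega$, and with $\sigma_1\neq\sigma_2$ this contains, besides the genuine difference $u-v$ on a fixed sphere, the radial oscillation of $v$ (or $u$) across the annulus. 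By Cauchy--Schwarz that contribution is only bounded by $(\sigma_2-\sigma_1)\,\sigma_1^{-m-a}\int z^a|\nabla v|^2\lesssim \varepsilon\Lambda$, i.e.\ of order $\varepsilon$, not of order $\delta_0^2\varepsilon^{(m+1+\frac a2+\frac{|a|}2)\frac{q}{q-1}}$ as the hypothesis provides for $\|u-v\|^2$. (Take $u=v$ with energy exactly $\Lambda$ concentrated in the radial direction: your smallness hypothesis is satisfied trivially, yet $\int|U-V|^2\sim\varepsilon\Lambda$.) Plugged into \eqref{eq:Luckhaus2}, the prefactor $\varepsilon^{-(m+1+\frac a2+\frac{|a|}2)}$ then makes the distance bound blow up as $\varepsilon\to0$, so $\tilde w$ need not land within the reach of $N$ and the nearest-point projection is not available. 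Your assertion that $U,V$ ``satisfy the size and smallness hypotheses'' of the lemma is therefore unjustified and in general false.

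The paper avoids this by selecting a \emph{single} good radius $\sigma\in(\rho,(1+\varepsilon/2)\rho)$ and interpolating between $\tilde u(\omega)=u(\sigma\omega)$ and $\tilde v(\omega)=v(\sigma\omega)$ on that one sphere, where $\int|\tilde u-\tilde v|^2$ is directly controlled by slicing the hypothesis on $\int z^a|u-v|^2$ (estimate \eqref{Luckhaus corollary4}), so \eqref{eq:Luckhaus2} gives $\sup\operatorname{dist}^2(\tilde w,N)\leq C\delta_0^{2(1-1/q)}$ and the projection goes through. The attachment to $v$ on the outer part is then achieved not by a second Luckhaus interpolation but by a radial reparametrization $w(r\omega)=v(\psi(r)\omega)$ with $t|\psi'(t)|\leq 2$, which costs only a fixed constant times the energy of $v$ and requires no smallness. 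Replacing your two-sphere interpolation with this one-sphere-plus-stretching construction repairs the argument; your Step~3 bookkeeping (the $\varepsilon^{-1}\cdot\varepsilon^{-1}=\varepsilon^{-2}$ count and the absorption of the Lipschitz constant of the projection into $C$) is otherwise consistent with the paper's.
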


\begin{proof}
    We first note that by \eqref{Luckhaus corollary1} and \eqref{Luckhaus corollary2}, there is a set of $\sigma \in (\rho, (1 + \frac{\varepsilon}{2})\rho)$ of positive measure such that
    \begin{equation}\label{Luckhaus corollary3}
        \sigma^{1 + 2s - m} \int_{\partial^+ B_\sigma(\mathbf{y})} z^a (|\nabla u|^2 + |\nabla v|^2) \leq C \rho^{2s - m} \varepsilon^{-1} \int_{B^+_{(1+\varepsilon)\rho}(\mathbf{y}) \setminus B^+_\rho(\mathbf{y})} z^a (|\nabla u|^2 + |\nabla v|^2)
    \end{equation}
    and
    \begin{equation}\label{Luckhaus corollary4}
        \begin{aligned}
            & \sigma^{-(m + 1) + 2s} \int_{\partial^+ B_\sigma(\mathbf{y})} z^a |u - v|^2 \\
            & \quad \leq C \rho^{-(m + 2) + 2s} \varepsilon^{-1} \int_{B^+_{(1+\varepsilon)\rho}(\mathbf{y}) \setminus B^+_\rho(\mathbf{y})} z^a |u - v|^2 \\
            & \quad \leq C \delta_0^2 \varepsilon^{(m + 1 + \frac{a}{2} + \frac{|a|}{2}) \cdot \frac{q}{q-1} - 1}.
        \end{aligned}
    \end{equation}
    By \eqref{Luckhaus corollary2}, we know that almost all of these $\sigma$ can be selected so that $u, v \in W_a^{1,2}(\partial^+ B_\sigma(\mathbf{y}); N)$. Now we can apply the modified Luckhaus lemma \ref{lemma:modified Luckhaus} (with $\varepsilon/4$ in place of $\varepsilon$) to the functions $\tilde{u}(\omega) \equiv u(\sigma \omega)$ and $\tilde{v}(\omega) = v(\sigma \omega)$, thus giving $\widetilde{w}$ on $S^{n-1} \times [0, \varepsilon/4]$ such that $\tilde{w}$ agrees with $\tilde{u}$ on $\mathbb{S}_{+}^m \times \{0\}$ and $\tilde{v}$ on $\mathbb{S}_{+}^m \times \{\varepsilon/4\}$ in the sense of traces,
    \begin{equation}\label{Luckhaus corollary5}
        \begin{aligned}
            & \int_{\mathbb{S}_{+}^m \times [0, \varepsilon/4]} \omega_{m+1}^a |\bar{D} \tilde{w}|^2 \\
            & \quad \leq C \varepsilon \int_{\mathbb{S}_{+}^m} \omega_{m+1}^a (|D \tilde{u}|^2 + |D \tilde{v}|^2) + C \varepsilon^{-1} \int_{\mathbb{S}_{+}^m} \omega_{m+1}^a |\tilde{u} - \tilde{v}|^2 \\
            & \quad \leq C \varepsilon \sigma^{1 + 2s - m} \int_{\partial^+ B_\sigma} z^a (|\nabla u|^2 + |\nabla v|^2) + C \varepsilon^{-1} \sigma^{-(m + 1) + 2s} \int_{\partial^+ B_\sigma} z^a |u - v|^2 \\
            & \quad \leq C \rho^{2s - m} \int_{B^+_{(1+\varepsilon)\rho}(\mathbf{y}) \setminus B^+_\rho(\mathbf{y})} z^a (|\nabla u|^2 + |\nabla v|^2) \\
            & \quad \quad + C \varepsilon^{-2} \rho^{-(m + 1) + 2s - 1} \int_{B^+_{(1+\varepsilon)\rho}(\mathbf{y}) \setminus B^+_\rho(\mathbf{y})} z^a |u - v|^2,
        \end{aligned}
    \end{equation}
    by \eqref{Luckhaus corollary3} and \eqref{Luckhaus corollary4}, and
    \begin{equation}\label{Luckhaus corollary6}
        \begin{aligned}
            & \sup \operatorname{dist}^2(\tilde{w}, N) \\
            & \quad \leq C \left( \int_{\mathbb{S}_{+}^m} \omega_{m+1}^a (|D \tilde{u}|^2 + |D \tilde{v}|^2) \right)^{1/q} \left( \varepsilon^{-(m + \frac{a}{2} + \frac{|a|}{2}) \cdot \frac{q}{q-1}} \int_{\mathbb{S}_{+}^m} \omega_{m+1}^a |\tilde{u} - \tilde{v}|^2 \right)^{1 - 1/q} \\
            & \quad \quad + C \varepsilon^{-(m + 1 + \frac{a}{2} + \frac{|a|}{2})} \int_{\mathbb{S}_{+}^m} \omega_{m+1}^a |\tilde{u} - \tilde{v}|^2.
        \end{aligned}
    \end{equation}
    Again by \eqref{Luckhaus corollary3} and \eqref{Luckhaus corollary4}, the right side here is $\leq C \delta_0^{2(1 - 1/q)}$ with $C = C(m, s, N, \Lambda)$. Now we can suppose (by taking a smaller $\delta_0 = \delta_0(m, s, N, \Lambda)$ if necessary) that $C \delta_0 \leq \alpha$, where $\alpha > 0$ is such that the nearest point projection $\Pi$ onto $N$ is well-defined and smooth in $N_\alpha \equiv \{x \in \mathbb{R}^n : \operatorname{dist}(x, N) \leq \alpha\}$. Now we can define a suitable function $w$, first on the ball $B_{(1+\varepsilon/2)\sigma}(y)$; we let $w(x) = \Pi \circ \tilde{w}(\omega, r/\sigma - 1)$, with $r = |x - y| \in (\sigma, (1 + \varepsilon/4)\sigma)$, $w(x) \equiv u(x)$ for $|x - y| \leq \sigma$, $w(r \omega) = v(\psi(r) \omega)$ for $r \in ((1 + \varepsilon/4)\sigma, (1 + \varepsilon/2)\sigma)$, where $\psi(t)$ is a $C^1(\mathbb{R})$ function with the properties $\psi((1 + \varepsilon/4)\sigma) = \sigma$, $\psi((1 + \varepsilon/2)\sigma) = (1 + \varepsilon/2)\sigma$ and $t |\psi'(t)| \leq 2$ for $t \in ((1 + \varepsilon/4)\sigma, (1 + \varepsilon/2)\sigma)$. In view of \eqref{Luckhaus corollary5}, it is straightforward to check that this satisfies the inequality stated in the lemma.
\end{proof}

Now, we prove the Theorem \ref{thm: compactness}.

	\begin{proof}[Proof of Theorem \ref{thm: compactness}]
We may assume that there exist a subsequence $\left\{v_{j^{\prime}}\right\}$ (henceforth denoted simply $\left\{v_j\right\}$) and a function $v \in \dot{W}_{a }^{1,2}(\mathbb{R}_{+}^{m+1} ; N)$ satisfying $\left.T\right|_{D_4} v_i=u_i$, $\left.T\right|_{D_4} v=u$ and $I^a(u_i,D_4)=E^a(v_i)<\infty$ such that $v_i\wto v$ in $\dot{W}_a^{1,2}\left(\mathbb{R}_{+}^{m+1} ; N\right)$, $v_i\wto v $ in $W_a^{1,2}\left(B_{4}^+ ; N\right)$, $v_i\to v $ in $L_a^2\left(B_{4}^+ ; N\right)$, $u_i\wto u$ in $H_{int}^{s}(D_4,N)$. 
Let $\overline{\partial^0 B^{+}_{\rho_{0}}\left(\mathbf{y}\right)} \subset D_4$, $\mathbf{y}=(y, 0)$ and let $\delta>0$ and $\theta \in(0,1)$ be given. Choose any $M \in$ $\{1.2, \ldots\}$ such that
 $$\limsup _{j \rightarrow \infty}~ \rho_0^{2s-m} \int_{B^+_{\rho_0}(\mathbf{y})}z^a|\nabla v_j|^2\leq\limsup _{j \rightarrow \infty}~ \rho_0^{2s-m} \int_{\mathbb{R}_{+}^{m+1}}z^a|\nabla v_j|^2<M \delta.$$
  We note that if $\varepsilon \in(0,(1-\theta) / M)$ we must have some integer $\ell \in\{2, \ldots . M\}$ such that
$$
\rho_0^{2s-m} \int_{B^+_{\rho_0(\theta+\ell \varepsilon)}(\mathbf{y}) \backslash B^+_{\rho_0(\theta+(\ell-2) \varepsilon)}(\mathbf{y})}z^a\left|\nabla v_j\right|^2<\delta
$$
for infinitely many $j$, because otherwise we get that $\rho_0^{2s-m} \int_{B^+_{\rho_0(\mathbf{y})}}z^a\left|\nabla v_j\right|^2>M \delta$ for all sufficiently large $j$ by summation over $\ell$, contrary to the definition of $M$. Thus choosing such an $\ell$, letting $\rho=\rho_0(\theta+(\ell-2) \varepsilon)$, and noting that $\rho(1+\varepsilon) \leq \rho_0(\theta+\ell \varepsilon)<$ $\rho_0$, we get $\rho \in\left(\theta \rho_0, \rho_0\right)$ such that
\begin{equation}\label{eq:compact1}
\rho_0^{2s-m} \int_{B^+_{\rho(1+\varepsilon)(\mathbf{y})} \backslash B^+_\rho(\mathbf{y})}z^a\left|\nabla v_{j^{\prime}}\right|^2<\delta
\end{equation}
for some subsequence $j^{\prime}$. Of course then by weak convergence of $\nabla v_{j^{\prime}}$ to $\nabla v$ we also have
\begin{equation}\label{eq:compact2}
\rho_0^{2s-m} \int_{B^+_{\rho(1+\varepsilon)}(\mathbf{y}) \backslash B^+_\rho(\mathbf{y})}z^a|\nabla v|^2 \leq \delta.
\end{equation}
Now, by Corollary \ref{corollary:Luckhaus corollary}, since $\int_{B^+_{\rho_0}(\mathbf{y})}z^a\left|v-v_{j^{\prime}}\right|^2 \rightarrow 0$, for sufficiently large $j^{\prime}$ we can find $w_{j^{\prime}} \in W_{a}^{1.2}\left(B^+_{\rho(1+\varepsilon)}(\mathbf{y}) \backslash B^+_\rho(\mathbf{y}) ; N\right)$ such that $w_{j^{\prime}}$ agrees with $v$ on $\partial^+ B_\rho(\mathbf{y})$ and $w_{j^{\prime}}$ agrees with $v_{j^{\prime}}$ on $\partial^+ B_{\rho(1+\varepsilon)}(\mathbf{y})$ in the sense of traces, and
\begin{equation}\label{eq:compact3}
\begin{aligned}
& \rho^{2s-m} \int_{B^+_{(1+\varepsilon) \rho}(\mathbf{y}) \backslash B^+_\rho(\mathbf{y})}z^{a}\left|\nabla w_{j^{\prime}}\right|^2  \\
& \quad \leq C \rho^{2s-m} \int_{B^+_{(1+\varepsilon) \rho}(\mathbf{y}) \backslash B^+_\rho(\mathbf{y})}z^{a}\left(|\nabla v|^2+\left|\nabla v_{j^{\prime}}\right|^2+\varepsilon^{-2} \rho^{-2}\left|v-v_{j^{\prime}}\right|^2\right),
\end{aligned}
\end{equation}
where $C$ depends only on $m, s, N$. Now let $h \in W_{a}^{1.2}\left(B_{\theta \rho_0}^+(\mathbf{y}) ; N\right)$ with $h$ agrees with $v$ on  $\partial ^+B_{\theta \rho_0}(\mathbf{y})$ in the sense of traces, extend $h$ to give $\tilde{h}$ in $W_{a}^{1.2}\left(\mathbb{R}_{+}^{m+1} ; N\right)$ by taking $\tilde{h} \equiv v$ on $\mathbb{R}_{+}^{m+1} \backslash B_{\theta \rho_0}^+(\mathbf{y})$, and let $\tilde{v}_{j^{\prime}}$ be defined by

$$
\tilde{v}_{j^{\prime}}= \begin{cases}v_{j^{\prime}} & \text { on } \mathbb{R}_{+}^{m+1}  \backslash B^+_{(1+\varepsilon) \rho}(\mathbf{y}) \\ w_{j^{\prime}} & \text { on } B^+_{(1+\varepsilon) \rho}(\mathbf{y}) \backslash B^+_\rho(\mathbf{y}) \\ \tilde{h} & \text { on } B^+_\rho(\mathbf{y}) .\end{cases}
$$
Then by Lemma \ref{lemma2.7}, the minimizing property of $v_{j^{\prime}}$ implies
\begin{equation}\label{eq:compact4}
\begin{aligned}
\int_{B^+_{(1+\varepsilon) \rho}(\mathbf{y})}z^{a}\left|\nabla v_{j^{\prime}}\right|^2 & \leq \int_{B^+_{(1+\varepsilon) \rho}(\mathbf{y})}z^{a}\left|\nabla \widetilde{v}_{j^{\prime}}\right|^2 \\
& \leq \int_{B^+_\rho(\mathbf{y})}z^{a}|\nabla \widetilde{h}|^2+\int_{B^+_{(1+\varepsilon) \rho}(\mathbf{y}) \backslash B^+_\rho(\mathbf{y})}z^{a}\left|\nabla w_{j^{\prime}}\right|^2,
\end{aligned}
\end{equation}
and hence by \eqref{eq:compact1}, \eqref{eq:compact2}, \eqref{eq:compact3} and \eqref{eq:compact4}
\begin{equation}\label{eq:compact5}
\begin{aligned}
\rho^{2s-m} \int_{B^+_\rho(\mathbf{y})}z^{a}|\nabla v|^2
&\leq \liminf _{j \rightarrow \infty} \rho^{2s-m} \int_{B^+_\rho(\mathbf{y})}z^{a}\left|\nabla v_{j^{\prime}}\right|^2\\
&\leq \rho^{2s-m} \int_{B^+_\rho(\mathbf{y})}z^{a}|\nabla \widetilde{h}|^2+C \delta,
\end{aligned}
\end{equation}
 where $C=C(n, N, s)$. Since $\delta>0$ was arbitrary, we obtain
$$
\rho^{2s-m} \int_{B^+_{\theta\rho_0(\mathbf{y})}}z^{a}|\nabla v|^2 \leq \rho^{2s-m} \int_{B^+_{\theta\rho_0(\mathbf{y})}}z^{a}|\nabla h|^2.
$$
Then the arbitrariness of $\theta$ and $\rho_0$ implies that for all balls $B^+_{\rho}(\mathbf{y})$ with $\overline{\partial^0 B^{+}_{\rho}\left(\mathbf{y}\right)} \subset D_4$, we have
\begin{equation}\label{eq:v minimising}
 \int_{B^+_{\rho(\mathbf{y})}}z^{a}|\nabla v|^2 \leq \int_{B^+_{\rho(\mathbf{y})}}z^{a}|\nabla h|^2.
\end{equation}

 To prove that the convergence is strong we note that if we use \eqref{eq:compact5} with $h=v$, then we can conclude

$$
\liminf _{j \rightarrow \infty} \rho^{2s-m} \int_{B^+_\rho(\mathbf{y})}z^{a}\left|\nabla v_{j^{\prime}}\right|^2 \leq \rho^{2s-m} \int_{B^+_\rho(\mathbf{y})}z^{a}|\nabla v|^2+C \delta,
$$
and hence, in view of the arbitrariness of $\theta$ and $\delta$,
$$
\rho^{2s-m} \liminf _{j \rightarrow \infty} \int_{B^+_{\rho_1}(\mathbf{y})}z^{a}\left|\nabla v_j\right|^2 \leq \rho^{2s-m} \int_{B^+_{\rho_0}(\mathbf{y})}z^{a}|\nabla v|^2,
$$
for each $\rho_1<\rho_0$. Evidently it follows from this (keeping in mind the arbitrariness of $\rho_0$ ) that
$$
\liminf _{j \rightarrow \infty} \int_{B^+_\rho(\mathbf{y})}z^{a}|\nabla v_j|^2 \leq \int_{B^+_\rho(\mathbf{y})}z^{a}|\nabla v|^2
$$
for every ball $B^+_\rho(\mathbf{y})$ such that $\overline{\partial^0 B_\rho^{+}\left(\mathbf{y}\right)} \subset D_4$. Then since
$$
\int_{B^+_\rho(\mathbf{y})}z^{a}|\nabla v_j-\nabla v|^2 \equiv \int_{B^+_\rho(\mathbf{y})}z^{a}|\nabla v|^2+\int_{B^+_\rho(\mathbf{y})}z^{a}\left|\nabla v_j\right|^2-2 \int_{B^+_\rho(\mathbf{y})}z^{a} \nabla v \cdot \nabla v_j,
$$
we can evidently select a subsequence which converges strongly to $\nabla v$ on $B^+_\rho(\mathbf{y})$. It implies $u_{j^{\prime}}$ converges strongly to $u$ in $H_{int}^{s}(\overline{\partial^0 B_\rho^{+}\left(\mathbf{y}\right)},N)$. Since this holds for arbitrary $\overline{\partial^0 B_\rho^{+}\left(\mathbf{y}\right)} \subset D_4$. it is then easy to see (by covering $D_4$ by a countable collection of balls $\partial^0 B^+_{\rho_j}\left(\mathbf{y}_{j}\right)$ such that $\overline{\partial^0 B^+_{\rho_j}\left(\mathbf{y}_{j}\right)} \subset D_4$) that there is a subsequence such that $u_{j^{\prime}}$ converges strongly locally in $H_{int}^{s}(D_4,N)$.
\end{proof}

Applying Theorem~\ref{thm: compactness} to rescaled mappings, we obtain the following corollary.

\begin{corollary}\label{rmk:tangent map}
    For $m \geq 3$, let $s \in (0,1)$, and for $m = 2$, let $s \in (0, 2/3)$. If $u \in H_{\text{int}}^{s}(\mathcal{O}, N)$ is a minimizing intrinsic $s$-harmonic map, then for every $x \in \mathcal{O}$ and every sequence $r_k \to 0$, there exist a subsequence $r_k' \to 0$, a map $\varphi: \mathbb{R}^m \to N$ that is $0$-homogeneous at the origin, and a map $\Phi: \mathbb{R}_+^{m+1} \to N$ whose restriction on $\pa\R^{m+1}_+$ is $0$-homogeneous at the origin, such that $\left.T\right|_{D_r} \Phi = \varphi$ and
    \[
    \begin{aligned}
        & u_{x, r_k'} \to \varphi \quad \text{strongly in } H_{\text{int}}^{s}(D_r, N), \\
        & v_{\mathbf{x}, r_k'} \to \Phi \quad \text{strongly in } \dot{W}_a^{1,2}(B_r^+; N),
    \end{aligned}
    \]
    for all $r > 0$, where $\mathbf{x} = (x, 0)$, $u_{x, r}(y) = u(x + ry)$.
\end{corollary}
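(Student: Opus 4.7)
\smallskip

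\noindent\textbf{Proof proposal.} My plan is to rescale a minimal harmonic extension of $u$ at $x$, extract a subsequential limit via the compactness Theorem~\ref{thm: compactness} on an exhausting sequence of half-balls, and identify the limit as $0$-homogeneous via the monotonicity formula.

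First I will fix a minimal harmonic extension $v \in \dot{W}_a^{1,2}(\mathbb{R}_+^{m+1}; N)$ of $u$ provided by Lemma~\ref{lemma2.7}, so that $v$ is $E^a$-minimizing relative to $\mathcal{O}$. Choose $R_0 > 0$ with $\overline{\partial^0 B_{R_0}^+(\mathbf{x})} \subset \mathcal{O}$ and set $v_{\mathbf{x}, r_k}(\mathbf{y}) := v(\mathbf{x} + r_k \mathbf{y})$, together with its trace $u_{x, r_k}(y) := u(x + r_k y)$. A direct change of variables gives the scaling identity $\mathbf{\Theta}_s(v_{\mathbf{x}, r_k}, 0, \rho) = \mathbf{\Theta}_s(v, \mathbf{x}, r_k \rho)$, so once $r_k\rho \leq R_0$ the monotonicity formula (Theorem~\ref{monotonicity formula}) yields the uniform bound $\int_{B_\rho^+} z^a |\nabla v_{\mathbf{x}, r_k}|^2 \D\mathbf{y} \leq \rho^{m-2s}\mathbf{\Theta}_s(v, \mathbf{x}, R_0)$. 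Moreover, scale invariance of $E^a$ keeps each $v_{\mathbf{x}, r_k}$ an $E^a$-minimizer on its rescaled domain, placing the sequence into the hypothesis of Theorem~\ref{thm: compactness} on every fixed half-ball (up to a trivial dilation from $D_{4R}$ to $D_4$).

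Next I will apply Theorem~\ref{thm: compactness} on balls $B_{R_j}^+$ with $R_j \to \infty$ and perform a standard diagonal extraction to obtain a subsequence $r_k'$ and limit maps $\varphi: \mathbb{R}^m \to N$ and $\Phi: \mathbb{R}_+^{m+1} \to N$ with $T|_{\mathbb{R}^m}\Phi = \varphi$ such that $u_{x, r_k'}\to\varphi$ strongly in $H^s_{\text{int}}(D_R, N)$ and $v_{\mathbf{x}, r_k'}\to\Phi$ strongly in $\dot{W}_a^{1,2}(B_R^+;N)$ for every $R>0$. Repeating the comparison argument used to derive \eqref{eq:v minimising} in the proof of Theorem~\ref{thm: compactness} shows that $\Phi$ is $E^a$-minimizing on $\mathbb{R}_+^{m+1}$ relative to $\partial\mathbb{R}_+^{m+1}$, so the monotonicity formula applies to $\Phi$ as well.

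Finally, homogeneity follows by passing the rescaled monotonicity identity to the limit. By strong convergence in $\dot{W}_a^{1,2}(B_\rho^+)$,
\[
\mathbf{\Theta}_s(\Phi, 0, \rho) = \lim_{k\to\infty}\mathbf{\Theta}_s(v_{\mathbf{x}, r_k'}, 0, \rho) = \lim_{k\to\infty}\mathbf{\Theta}_s(v, \mathbf{x}, r_k'\rho) = \mathbf{\Xi}_s(v, \mathbf{x})
\]
for every $\rho>0$, using the existence of the tangent density $\mathbf{\Xi}_s(v,\mathbf{x})$. Since $\rho \mapsto \mathbf{\Theta}_s(\Phi, 0, \rho)$ is therefore constant, the monotonicity identity~\eqref{monotonicity formula} applied to $\Phi$ forces $\mathbf{y}\cdot\nabla\Phi = 0$ a.e.\ on $\mathbb{R}_+^{m+1}$, so $\Phi$ is $0$-homogeneous about the origin and in particular so is its trace $\varphi$. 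I expect the main obstacle to be precisely this last passage to the limit of $\mathbf{\Theta}_s$: it requires \emph{strong} $\dot{W}_a^{1,2}$-convergence on each $B_\rho^+$ (weak convergence alone would give only an inequality and not a constant density for $\Phi$), which is exactly what Theorem~\ref{thm: compactness} is designed to supply; the diagonal extraction and the inheritance of the minimizing property in the limit are then routine.
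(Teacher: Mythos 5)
Your proposal is correct and is precisely the route the paper intends: the paper states this corollary as a direct application of the compactness Theorem~\ref{thm: compactness} to the rescaled maps, and your write-up supplies the standard details — the scaling identity and monotonicity formula (Theorem~\ref{monotonicity formula}) for uniform local energy bounds, diagonal extraction with strong convergence, minimality of the limit, and constancy of $\mathbf{\Theta}_s(\Phi,0,\rho)=\mathbf{\Xi}_s(v,\mathbf{x})$ forcing $0$-homogeneity. The only point worth noting is that the hypothesis of Theorem~\ref{thm: compactness} should be read locally for the blow-up sequence (the global energy $E^a(v_{\mathbf{x},r_k})=r_k^{2s-m}E^a(v)$ blows up, but only the monotonicity-supplied bounds on fixed half-balls are actually used), which is the same level of care the paper itself takes.
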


Now we can introduce the concept of tangent map.

\begin{definition}
     Let $s \in (0,1)$ for $m \geq 3$, and  let $s \in (0, 2/3)$ for $m = 2$. Let $u \in H_{\text{int}}^{s}(\mathcal{O}, N)$ be a minimizing intrinsic $s$-harmonic map. The map $\varphi$ derived above is called a \emph{tangent map} of $u$ at $x \in \mathcal{O}$.
\end{definition}

To conclude this section, we describe the classical stratification of the singular set based on the symmetry properties of tangent maps.

\begin{definition}[Symmetry]\label{def: classical symmetry}
    Given a measurable map $\varphi: \mathbb{R}^m \to \mathbb{R}$, we say that:
    \begin{enumerate}
        \item $\varphi$ is \emph{$0$-homogeneous} or \emph{$0$-symmetric} with respect to a point $p \in \mathbb{R}^m$ if
        \[
        \varphi(p + \lambda v) = \varphi(p + v) \quad \text{for all } \lambda > 0 \text{ and } v \in \mathbb{R}^m.
        \]
        \item $\varphi$ is \emph{$k$-symmetric} if it is $0$-symmetric with respect to the origin and translation-invariant with respect to a $k$-dimensional subspace $V \subset \mathbb{R}^m$, i.e.,
        \[
        \varphi(x + v) = \varphi(x) \quad \text{for all } x \in \mathbb{R}^m,\ v \in V.
        \]
    \end{enumerate}
\end{definition}

In view of the symmetric property of $\Phi: \mathbb{R}_+^{m+1} \to N$ in Corollary \ref{rmk:tangent map}, we also introduce the following definition for convenience.

\begin{definition}[Relative/Boundary symmetry]\label{def: Relative/boundary symmetry}
	A map $h: \overline{\mathbb{R}_{+}^{m+1}} \to \mathbb{R}$ is called \emph{boundary $k$-symmetric} if:
	\[
	h(\lambda v) = h(v) \quad \text{for all } \lambda > 0 \text{ and } v \in \overline{\mathbb{R}_{+}^{m+1}},
	\]
	and if there exists a $k$-dimensional subspace $V \subset \mathbb{R}^m = \partial \mathbb{R}_{+}^{m+1}$ such that:
	\[
	h(x + v) = h(x) \quad \text{for all } x \in \overline{\mathbb{R}_{+}^{m+1}},\ v \in V.
	\]
\end{definition}

Now, for any $k \in \{0,1,\cdots,m\}$, we define for a minimizing intrinsic $s$-harmonic map the stratified  sets
\begin{equation}\label{eq: classical stratification}
    \Sigma^{k}(u) = \{x \in \mathcal{O} : \text{no tangent map of } u \text{ is } (k+1)\text{-symmetric at } x\}.
\end{equation}
It is straightforward to verify that
\[
\Sigma^{0}(u) \subset \Sigma^{1}(u) \subset \cdots \subset \Sigma^{m-1}(u) \subset \Sigma^{m}(u) = \mathcal{O}.
\]
Furthermore, $x \notin \Sigma^{m-1}(u)$ means that $u$ has a constant tangent map at $x$. This leads to the following simple observation.

\begin{proposition}\label{rmk: sing}
    For $m \geq 3$, let $s \in (0,1)$, and for $m = 2$, let $s \in (0, 2/3)$. Let $u \in H_{int}^{s}(\mathcal{O}, N)$ be a minimizing intrinsic $s$-harmonic map. Then we have
    \[
    \mathrm{sing}(u) = \Sigma^{m-1}(u) \subset \Sigma,
    \]
    where $\Sigma$ and $\mathrm{sing}(u)$ are defined in \eqref{Sigma} and \eqref{def: singular set}, respectively.
\end{proposition}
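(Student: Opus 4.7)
My plan is to decompose the statement into three inclusions: (i) $\mathrm{sing}(u) \subset \Sigma$; (ii) $\Sigma^{m-1}(u) \subset \mathrm{sing}(u)$; and (iii) $\mathrm{sing}(u) \subset \Sigma^{m-1}(u)$. Together (ii) and (iii) yield the equality $\mathrm{sing}(u) = \Sigma^{m-1}(u)$, while (i) gives the remaining inclusion in $\Sigma$. The essential tools are the $\varepsilon$-regularity theorem (Theorem \ref{thm: partial regularity}), the monotonicity formula (Theorem \ref{monotonicity formula}), and the tangent-map machinery of Corollary \ref{rmk:tangent map}, together with the observation (implicit in the proof of Theorem \ref{thm: compactness}; cf.\ \eqref{eq:v minimising}) that the half-space extension $\Phi$ of a tangent map is itself an $E^a$-minimizer on $\mathbb{R}_{+}^{m+1}$ relative to $\partial \mathbb{R}_{+}^{m+1}$.

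Claim (i) is immediate from the definition $\Sigma = \{y \in \mathcal{O} : \boldsymbol{\Xi}_s(v,\mathbf{y}) > \varepsilon\}$: if $x \notin \Sigma$ then $\boldsymbol{\Xi}_s(v,\mathbf{x}) \leq \varepsilon$, and by the monotonicity formula $\boldsymbol{\Theta}_s(v,\mathbf{x},r_0) \leq \varepsilon$ for all sufficiently small $r_0 > 0$, so Theorem \ref{thm: partial regularity} gives $v \in C^{0,\gamma}\bigl(\overline{B^+_{\theta r_0}(\mathbf{x})}\bigr)$; hence $u = T|_{\mathcal{O}} v$ is continuous in a neighborhood of $x$. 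Claim (ii) is equally direct: if $u$ is continuous in a neighborhood of $x$, then for any $r_k \to 0$ the rescalings $u_{x, r_k}(y) = u(x + r_k y)$ converge uniformly on compact subsets of $\mathbb{R}^m$ to the constant $u(x)$; Corollary \ref{rmk:tangent map} supplies a subsequence $r_k'$ along which $u_{x, r_k'}$ converges strongly in $H^s_{\mathrm{int}}(D_r, N)$ to a tangent map $\varphi$, and uniqueness of the $L^1_{\mathrm{loc}}$-limit forces $\varphi \equiv u(x)$, which is $m$-symmetric, so $x \notin \Sigma^{m-1}(u)$.

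For claim (iii), assume $x \notin \Sigma^{m-1}(u)$, so some tangent map $\varphi$ of $u$ at $x$ is $m$-symmetric, i.e.\ a constant $p \in N$. Let $\Phi$ be the associated half-space extension from Corollary \ref{rmk:tangent map}, so that $v_{\mathbf{x}, r_k'} \to \Phi$ strongly in $\dot{W}_a^{1,2}(B_\rho^+; N)$ for every $\rho > 0$, and the boundary trace of $\Phi$ equals $\varphi \equiv p$. Passing to the limit in the minimality inequalities for $v_{\mathbf{x}, r_k'}$ via the competitor construction of Theorem \ref{thm: compactness} (cf.\ \eqref{eq:v minimising}) shows that $\Phi$ is itself an $E^a$-minimizer relative to $\partial \mathbb{R}_{+}^{m+1}$. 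Comparing $\Phi$ with the constant competitor $p$, which shares its boundary trace and has zero energy, yields $E^a(\Phi) = 0$; the positivity of $z^a$ a.e.\ then forces $\Phi \equiv p$. The scaling identity $\boldsymbol{\Theta}_s(v, \mathbf{x}, r_k') = \int_{B_1^+} z^a |\nabla v_{\mathbf{x}, r_k'}|^2 \,\D\mathbf{y}$, combined with strong $\dot{W}_a^{1,2}$ convergence, gives $\boldsymbol{\Theta}_s(v, \mathbf{x}, r_k') \to 0$; the monotonicity formula then yields $\boldsymbol{\Xi}_s(v, \mathbf{x}) = 0 < \varepsilon$, so $x \notin \Sigma$ and, by claim (i), $x \notin \mathrm{sing}(u)$.

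The only delicate point is the promotion of ``$\varphi$ constant'' to ``$\Phi$ constant'', which rests on the $E^a$-minimality of the tangent extension $\Phi$; this is not spelled out in Corollary \ref{rmk:tangent map}, but is inherited from the rescaled minimizers $v_{\mathbf{x}, r_k'}$ via strong $\dot{W}_a^{1,2}$ convergence by an essentially verbatim repetition of the Luckhaus-type competitor argument of Section \ref{sec: compactness}. Everything else is a routine application of the tools listed above.
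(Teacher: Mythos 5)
Your overall route is the same as the paper's: the equality $\mathrm{sing}(u)=\Sigma^{m-1}(u)$ is obtained by (a) observing that a regular point has a constant tangent map, and (b) upgrading a constant tangent map $\varphi$ to a constant extension $\Phi$, deducing $\boldsymbol{\Theta}_s(v,\mathbf{x},r)\to 0$ and invoking the $\varepsilon$-regularity Theorem \ref{thm: partial regularity}; the inclusion $\mathrm{sing}(u)\subset\Sigma$ comes from \eqref{Sigma}. However, your justification of the crux step ``$\varphi$ constant $\Rightarrow$ $\Phi$ constant'' has a genuine gap. You compare $\Phi$ with ``the constant competitor $p$, which shares its boundary trace and has zero energy.'' But the minimality that $\Phi$ inherits (Definition \ref{def:v minimising}, or its localized form \eqref{eq:v minimising}) constrains competitors on the \emph{wrong} part of the boundary for this comparison: a competitor must agree with $\Phi$ outside a compact set (equivalently, on the lateral boundary $\partial^+B_\rho$ of each half-ball), while the flat boundary $\partial\mathbb{R}^{m+1}_+$ is free and the trace there is \emph{not} a constraint at all. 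The constant map $p$ does not agree with $\Phi$ on $\partial^+B_\rho$ (nor outside any compact set) unless $\Phi$ is already constant, so it is not an admissible competitor; moreover, for a nonconstant boundary $0$-homogeneous map the global energy $E^a(\Phi)$ is infinite ($\int_{B_R^+}z^a|\nabla\Phi|^2=R^{m-2s}\,\boldsymbol{\Xi}_s$), so the identity ``$E^a(\Phi)=0$ by comparison'' is not even well posed without localizing. In other words, you are implicitly using a trace-constrained ($I^a$-type) minimality for $\Phi$, which is a different property from the relative $E^a$-minimality that the Luckhaus argument actually transfers to the limit. The paper's proof at this point is terse as well (it states ``\eqref{eq:v minimising} and Definition \ref{def:v minimising} imply that $\Phi$ is $E^a$-minimizing relative to $D_r$, hence $\Phi\equiv C$''), but it stays within the half-ball comparison \eqref{eq:v minimising}; your replacement argument, as written, would fail and needs either a localized competitor construction on half-balls or some other use of the constancy of the flat trace together with homogeneity.

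Two smaller remarks. In your claim (i), monotonicity runs the other way: $\boldsymbol{\Theta}_s(v,\mathbf{x},\cdot)$ is nondecreasing, so $\boldsymbol{\Xi}_s(v,\mathbf{x})\le\varepsilon$ gives $\boldsymbol{\Theta}_s(v,\mathbf{x},r_0)\downarrow\boldsymbol{\Xi}_s$ but not $\le\varepsilon$ at a definite scale in the borderline case $\boldsymbol{\Xi}_s=\varepsilon$; the inclusion $\mathrm{sing}(u)\subset\Sigma$ is anyway immediate from the characterization $\Sigma=\operatorname{sing}(v)\cap\mathcal{O}$ in \eqref{Sigma}, which is how the paper argues. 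Finally, your detour via $\boldsymbol{\Xi}_s(v,\mathbf{x})=0<\varepsilon$ and claim (i) instead of applying Theorem \ref{thm: partial regularity} directly, as the paper does, is fine once the main step is repaired; the strong convergence in Corollary \ref{rmk:tangent map} together with Theorem \ref{monotonicity formula} indeed yields $\boldsymbol{\Xi}_s(v,\mathbf{x})=\boldsymbol{\Theta}_s(\Phi,0,\rho)$ for all $\rho$, so everything hinges on showing $\Phi$ is constant.
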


\begin{proof}
    We first prove $\mathrm{sing}(u) = \Sigma^{m-1}(u)$.

    Suppose that $x \in \mathcal{O} \setminus \Sigma^{m-1}(u)$; that is, $u$ has a constant tangent map $\varphi$ at $x$. By Corollary \ref{rmk:tangent map}, there exists a subsequence $r_k \to 0$ such that $u_{x,r_k} \to \varphi \equiv C$ in $H_{\mathrm{int}}^{s}(D_r, N)$, $v_{\mathbf{x},r_k} \to \Phi$ in $\dot{W}_a^{1,2}(B_r^+; N)$ for all $r > 0$, where $\mathbf{x} = (x, 0)$, $u_{x,r}(y) = u(x + ry)$, $\left.T\right|_{D_r} \Phi = \varphi$, and $\Phi$ is boundary $0$-homogeneous. Then \eqref{eq:v minimising} and Definition \ref{def:v minimising} imply that $\Phi$ is $E^a$-minimizing relative to $D_r$, hence $\Phi \equiv C$. So the compactness Theorem \ref{thm: compactness} implies that $\Theta_s(v, \mathbf{x}, r) \to 0$ as $r \to 0$, which in turn implies by the $\varepsilon$-regularity theorem that $u$ is smooth in a neighborhood of $x$. Hence $\mathrm{sing}(u) \subset \Sigma^{m-1}(u)$. On the other hand, if $u$ is smooth in a neighborhood of $x$, then surely there is a constant tangent map at $x$. This implies that $\Sigma^{m-1}(u) \subset \mathrm{sing}(u)$.

    By Theorem \ref{thm: partial regularity}, the inclusion $\mathrm{sing}(u) \subset \Sigma$ holds.
\end{proof}

Consequently, in this case, we have
\[
\Sigma^{0}(u) \subset \Sigma^{1}(u) \subset \cdots \subset \Sigma^{m-1}(u) = \mathrm{sing}(u).
\]
This is the so-called classical stratification of $\mathrm{sing}(u)$. In the next section, we will use the approach of Cheeger and Naber \cite{Cheeger-Naber-2013-CPAM} to study each $\Sigma^{k}(u)$.

It is worth noting that in \cite{Roberts-18-CV}, Roberts established an estimate for the Hausdorff dimension of the set $\Sigma$. However, since his work did not investigate the fine structure of the singular set $\mathrm{sing}(u) $, the dimension of $\mathrm{sing}(u)$ could potentially be lower; obtaining an improved estimate for $\dim_{\mathcal{H}} \mathrm{sing}(u)$ is therefore one of the main objectives of this paper.

\section{Quantitative stratification and volume estimates}\label{sec: Quantitative stratification and volume estimates}

Our analysis relies on the technique of quantitative symmetry, originally introduced by Cheeger and Naber \cite{Cheeger-Naber-2013-CPAM}.

\begin{definition}[Quantitative symmetry]
     Let $s \in (0,1)$ for $m \geq 3$, and  let $s \in (0, 2/3)$ for $m = 2$. Fix a constant $1 < \gamma_0 < \min \left\{ \frac{1}{1-s}, 2 \right\}$. Given a minimizing intrinsic $s$-harmonic map $u \in H_{\mathrm{int}}^{s}(\mathcal{O}, N)$, $\varepsilon > 0$, and a nonnegative integer $k$, we say that $u$ is \emph{$(k, \varepsilon)$-symmetric} on $D_r(x) \subset \subset \mathcal{O}$ if there exists a $v \in \dot{W}_a^{1,2}(\mathbb{R}_{+}^{m+1}; N)$ with $\left.T\right|_{\mathcal{O}} v = u$ such that $I^a(u, \mathcal{O}) = E^a(v)$ and a boundary $k$-symmetric function $h: \overline{\mathbb{R}_{+}^{m+1}} \to N$ such that
    \[
    \medint_{B_1^{+}} \left| v_{\mathbf{x},r}(\mathbf{y}) - h_{\mathbf{0},r}(\mathbf{y}) \right|^{\gamma_0}  \mathrm{d}\mathbf{y} = \medint_{B_r^{+}(\mathbf{x})} \left| v(\mathbf{y}) - h(\mathbf{y} - \mathbf{x}) \right|^{\gamma_0}  \mathrm{d}\mathbf{y} \leq \varepsilon,
    \]
    where $\mathbf{x} = (x, 0)$ and $v_{\mathbf{x},r}(\mathbf{y}) = v(\mathbf{x}+r\mathbf{y})$.
\end{definition}

By Lemma~\ref{lemma:Weighted homogeneous Sobolev spaces} and the compact embedding \eqref{eq:compact embedding}, the above integral is well-defined. A basic fact concerning this notion is the following weak compactness property of quantitatively symmetric functions.

\begin{remark}\label{rmk: symmetry compactness}
     Let $s \in (0,1)$ for $m \geq 3$, and  let $s \in (0, 2/3)$ for $m = 2$.  Let $\{u_i\} \subset H_{\Lambda}^{s}(\mathcal{O}, N)$ be a sequence of uniformly bounded minimizing intrinsic $s$-harmonic maps. Suppose $u_i$ converges weakly to a mapping $u$ in $H_{\Lambda}^{s}(\mathcal{O}, N)$, $D_{2r}(x) \subset \mathcal{O}$, and $u_i$ is $(k, \varepsilon_i)$-symmetric on $D_r(x)$ for some $\varepsilon_i \to 0$. Then $u$ is $k$-symmetric on $D_r(x)$.
\end{remark}

To see this, assume without loss of generality that $x = 0$ and $r = 1$. By the definition of quantitative symmetry, there exist a sequence of $v_i \in \dot{W}_a^{1,2}(\mathbb{R}_{+}^{m+1}; N)$ with $\left.T\right|_{\mathcal{O}} v_i = u_i$ such that $I^a(u_i, \mathcal{O}) = E^a(v_i)$ and boundary $k$-symmetric functions $h_i$ such that
\[
\medint_{B_1^{+}} \left| v_i(\mathbf{y}) - h_i(\mathbf{y}) \right|^{\gamma_0}  \mathrm{d}\mathbf{y} \leq \varepsilon_i \to 0.
\]
Since $E^a(v_i) = I^a(u_i, \mathcal{O})\le \Lambda$, we may assume (up to a subsequence) that $v_i \rightharpoonup v$ in $\dot{W}_a^{1,2}(\mathbb{R}_{+}^{m+1}; N)$, $v_i \rightharpoonup v$ in $W_a^{1,2}(B_1^{+}; N)$, and $v_i \to v$ in $L^{\gamma_0}(B_1^{+}; N)$. Then $h_i \to v$ strongly in $L^{\gamma_0}(B_1^{+}; N)$. The fact that $h_i$ is boundary $k$-symmetric implies that $v$ is boundary $k$-symmetric in $B_1^{+}$. On the other hand, by the compactness Theorem~\ref{thm: compactness}, we have $v_i \to v$ in $\dot{W}_a^{1,2}(\mathbb{R}_{+}^{m+1}; N)$. Since $\left.T\right|_{\mathcal{O}}$ is a continuous linear trace operator, it follows that $\left.T\right|_{\mathcal{O}} v = u$, which implies that $u$ is $k$-symmetric in $D_1$.

Given the definition of quantitative symmetry, we can introduce a quantitative stratification for points of a function according to how symmetric it is around those points.

\begin{definition}[Quantitative stratification]\label{def: qs}
    For any minimizing intrinsic $s$-harmonic map $u \in H_{\mathrm{int}}^{s}(\mathcal{O}, N)$, $r, \eta > 0$, and $k \in \{0, 1, \dots, m\}$, we define the \emph{$k$-th quantitative singular stratum} $\mathcal{S}_{\eta,r}^k(u) \subset \mathcal{O}$ by
    \[
    \mathcal{S}^k_{\eta,r}(u) =: \left\{ x \in \mathcal{O} : u \text{ is not } (k+1, \eta)\text{-symmetric on } D_s(x) \text{ for any } r \leq s \leq 1 \right\}.
    \]
    Furthermore, we set
    \[
    \mathcal{S}^k_\eta(u) =: \bigcap_{r > 0} \mathcal{S}^k_{\eta,r}(u)
    \quad \text{and} \quad
    \mathcal{S}^k(u) =: \bigcup_{\eta > 0} \mathcal{S}^k_\eta(u).
    \]
\end{definition}

It is straightforward to verify by definition that if $k' \leq k$, $\eta' \geq \eta$, and $r' \leq r$, then
\[
\mathcal{S}^{k'}_{\eta',r'}(u) \subseteq \mathcal{S}^k_{\eta,r}(u).
\]
The following proposition shows that this definition of quantitative stratification is indeed a quantitative version of the classically defined one.

\begin{proposition}
    For $m \geq 3$, let $s \in (0,1)$, and for $m = 2$, let $s \in (0, 2/3)$. If $u \in H_{\Lambda}^{s}(\mathcal{O}, N)$ is a minimizing intrinsic $s$-harmonic map, then
    \[
    \Sigma^{k}(u) \subset S^{k}(u), \qquad \forall\, 0 \le k \le m,
    \]
    where $\Sigma^{k}(u)$ is defined as in \eqref{eq: classical stratification}.
\end{proposition}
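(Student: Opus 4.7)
The plan is to prove the contrapositive: if $x\in\mathcal{O}\setminus\mathcal{S}^k(u)$, then some tangent map of $u$ at $x$ is $(k+1)$-symmetric, which rules out $x\in\Sigma^k(u)$. From the definitions, $x\notin\mathcal{S}^k(u)$ means that for every $\eta>0$ there exists $s\in(0,1]$ such that $u$ is $(k+1,\eta)$-symmetric on $D_s(x)$. Choosing $\eta_j\downarrow 0$, this yields scales $s_j\in(0,1]$, minimizers $v_j\in\dot{W}_a^{1,2}(\mathbb{R}^{m+1}_+;N)$ with $T|_\mathcal{O} v_j=u$ and $E^a(v_j)=I^a(u,\mathcal{O})$, together with boundary $(k+1)$-symmetric maps $h_j$ satisfying
\[
\medint_{B^+_{s_j}(\mathbf{x})}|v_j(\mathbf{z})-h_j(\mathbf{z}-\mathbf{x})|^{\gamma_0}\,\D\mathbf{z}\le\eta_j,\qquad \mathbf{x}=(x,0).
\]
Passing to a subsequence I may assume $s_j\to s_\infty\in[0,1]$ and split the argument into the two cases $s_\infty>0$ and $s_\infty=0$.

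When $s_\infty>0$, the compactness Theorem \ref{thm: compactness} produces a further subsequence along which $v_j\to v_*$ strongly in $\dot{W}_a^{1,2}(B^+_\rho(\mathbf{y});N)$ on every admissible half-ball, with $v_*$ again a minimizer having $T|_\mathcal{O} v_*=u$. The compact Sobolev embedding then gives $(v_j)_{\mathbf{x},s_j}\to(v_*)_{\mathbf{x},s_\infty}$ in $L^{\gamma_0}(B_1^+)$ and, after a further subsequence, $h_j\to h_\infty$ in $L^{\gamma_0}(B_1^+)$ with $h_\infty$ still boundary $(k+1)$-symmetric. Taking the limit in the displayed bound forces $(v_*)_{\mathbf{x},s_\infty}\equiv h_\infty$ on $B_1^+$, i.e.\ $v_*$ is boundary $(k+1)$-symmetric with respect to $\mathbf{x}$ on $B^+_{s_\infty}(\mathbf{x})$. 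Since $h_\infty$ is $0$-homogeneous, every rescaling $(v_*)_{\mathbf{x},r}$ with $r\le s_\infty$ coincides with $h_\infty$, so Corollary \ref{rmk:tangent map} applied to $v_*$ exhibits the tangent map $\varphi=T|_{\mathbb{R}^m}h_\infty$ of $u$ at $x$, which is $(k+1)$-symmetric by construction.

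The more delicate case is $s_\infty=0$. I would pass instead to the rescaled minimizers $w_j:=(v_j)_{\mathbf{x},s_j}$. Each $w_j$ is an $E^a$-minimizer on $B^+_R$ for $R$ eventually as large as desired, since $s_j^{-1}(\mathcal{O}-x)$ exhausts $\mathbb{R}^m$, and the monotonicity formula gives
\[
E^a(w_j;B^+_R)=R^{m-2s}\boldsymbol{\Theta}_s(v_j,\mathbf{x},Rs_j)/2\le C(R,\Lambda)
\]
uniformly in $j$. A diagonal application of Theorem \ref{thm: compactness} then supplies a subsequence along which $w_j\to w_*$ strongly in $\dot{W}_a^{1,2}(B^+_R;N)$ for every $R>0$, with $w_*$ a minimizer of $E^a$ on $\mathbb{R}^{m+1}_+$ relative to $\mathbb{R}^m$. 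Passing to the limit in the displayed bound as before gives $w_*\equiv h_\infty$ on $B_1^+$, so $w_*$ is boundary $(k+1)$-symmetric on $B_1^+$ with invariance subspace $V\subset\mathbb{R}^m$.

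To finish, the trace-level part of Theorem \ref{thm: compactness} gives $u_{x,s_j}=T|_{\partial^0 B^+_R}w_j\to T|_{\partial^0 B^+_R}w_*$ strongly in $H^s_{\mathrm{int}}$ on every bounded disk, so $\varphi:=T|_{\mathbb{R}^m}w_*$ is a strong blow-up limit of rescalings of $u$ at $x$ and hence a tangent map of $u$ at $x$ in the sense of Corollary \ref{rmk:tangent map}. Every such tangent map is $0$-homogeneous at the origin by construction, and the $V$-invariance that $\varphi$ inherits from $h_\infty$ on $D_1$ propagates to all of $\mathbb{R}^m$ via the $0$-homogeneity: given $y\neq 0$ and $v\in V$, pick $\lambda>0$ small enough that $\lambda y$ and $\lambda(y+v)$ both lie in $D_1$; then $\varphi(y+v)=\varphi(\lambda y+\lambda v)=\varphi(\lambda y)=\varphi(y)$ by $V$-invariance on $D_1$ and $0$-homogeneity. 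Thus $\varphi$ is $(k+1)$-symmetric, so $x\notin\Sigma^k(u)$, as required. The main anticipated obstacle is the Case $s_\infty=0$ bookkeeping that promotes the blow-up $w_*$, built from a varying sequence of minimizers $v_j$, to a genuine half-space extension attached to a tangent map of $u$; this rests decisively on the strong $\dot{W}_a^{1,2}$ convergence of rescaled minimizers from Theorem \ref{thm: compactness}, together with the stability of the minimizing and $E^a$-monotonicity properties under rescaling.
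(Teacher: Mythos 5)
Your proof is correct and follows essentially the same route as the paper: argue the contrapositive, extract almost-symmetric approximants $h_j$ at scales $s_j$, split into the cases $s_j\to s_\infty>0$ and $s_j\to 0$, and use the compactness theorem to pass to a limit that is boundary $(k+1)$-symmetric and realizes (or forces) a $(k+1)$-symmetric tangent map. You are in fact slightly more careful than the paper on two points the paper glosses over --- the minimal extensions $v_j$ may vary with $j$, and the $V$-invariance obtained on $D_1$ must be propagated to all of $\mathbb{R}^m$ via $0$-homogeneity --- but the substance of the argument is the same.
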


\begin{proof}
    Suppose $x \notin S^{k}(u)$. Then for each $i \geq 1$, there exists $r_{i} > 0$ such that $u$ is $(k+1, 1/i)$-symmetric on $D_{r_{i}}(x)$. That is, there exists a boundary $(k+1)$-symmetric function $h_i: \overline{\mathbb{R}_{+}^{m+1}} \to N$ such that
    \[
    \medint_{B_1^{+}} \left| v_{\mathbf{x},r_{i}}(\mathbf{y}) - h_i(\mathbf{y}) \right|^{\gamma_0}  \mathrm{d}\mathbf{y} \leq 1/i.
    \]
    Up to a subsequence, we may assume that $v_{\mathbf{x}, r_i} \rightharpoonup w$ in $\dot{W}_a^{1,2}(B_1^{+}; N)$, $v_{\mathbf{x}, r_i} \rightharpoonup w$ in $W_a^{1,2}(B_1^{+}; N)$, and $v_{\mathbf{x}, r_i} \to w$ in $L^{\gamma_0}(B_1^{+}; N)$. Then $h_i \to w$ in $L^{\gamma_0}(B_1^{+}; N)$. Moreover, $w$ is boundary $(k+1)$-symmetric in $B_1^{+}$.

    If $r_{i} \to 0$, then by the compactness Theorem~\ref{thm: compactness}, we obtain a tangent map $\left.T\right|_{D_1} w$ of $u$ at $x$ which is $(k+1)$-symmetric on $D_{1}$. If $r_i \to r > 0$, then from Theorem~\ref{thm: compactness},
    \[
    \medint_{B^+_{r_i}(\mathbf{x})} \left| v(\mathbf{y}) - h_i(\mathbf{y} - \mathbf{x}) \right|^{\gamma_0}  \mathrm{d}\mathbf{y} = \medint_{B_1^{+}} \left| v_{\mathbf{x},r_{i}}(\mathbf{y}) - h_i(\mathbf{y}) \right|^{\gamma_0}  \mathrm{d}\mathbf{y} \leq 1/i.
    \]
    Letting $i \to \infty$, the lower semicontinuity implies that
    \[
    \medint_{B^+_r(\mathbf{x})} \left| v(\mathbf{y}) - w(\mathbf{y} - \mathbf{x}) \right|^{\gamma_0}  \mathrm{d}\mathbf{y} = 0.
    \]
    This implies that there exists a positive radius $\delta \leq r$ such that $v(\mathbf{y}) = w(\mathbf{y} - \mathbf{x})$ for almost every $\mathbf{y} \in B^+_\delta(\mathbf{x})$, so $w$ is boundary $(k+1)$-symmetric in $B_1^{+}$. It follows that the tangent map of $v$ at $\mathbf{x}$ is boundary $(k+1)$-symmetric in the whole upper half-space $\mathbb{R}^{m+1}_{+}$. The compactness Theorem~\ref{thm: compactness} and the continuity of the trace operator $T|_{\mathcal{O}}$ imply that $u$ is $(k+1)$-symmetric. Hence, all tangent maps of $u$ at $x$ are $(k+1)$-symmetric, and thus $x \notin \Sigma^k(u)$. Therefore, $\Sigma^{k}(u) \subset S^{k}(u)$.
\end{proof}

	The following two lemmata give a criterion on the quantitative symmetry of a given 	mapping.
	\begin{lemma}[Quantitative Rigidity]\label{quantitative rigidity}
		When $m \geq 3$ let $s \in(0,1)$ and when $m=2$ let $s \in \left(0, 2/3\right)$. Let $u\in H_{\Lambda}^{s}(D_4,N)$  is a minimizing intrinsic $s$-harmonic map, $v \in \dot{W}_a^{1,2}\left(\mathbb{R}_{+}^{m+1} ; N\right)$ with $\left.T\right|_{D_4} v=u$ such that $I^a(u,D_4)=E^a(v)$. Then for every $\ep>0$ and $0<\gamma<1 / 2$ there exist $\delta=\delta(\gamma, \ep, s, m, N, \Lambda)>0$ and $q=q(\gamma, \ep, s, m, N, \Lambda) \in \mathbb{N}$ such that for $r \in(0,1 / 2)$, if
		$$
		\boldsymbol{\Theta}_s(v,\boldsymbol{0},2 r)-\boldsymbol{\Theta}_s\left(v,\boldsymbol{0}, \gamma^q r\right) \leq \delta,
		$$
		then $u$ is $(0, \ep)$-symmetric on $D_{2r}$.	\end{lemma}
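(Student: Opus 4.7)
The plan is to prove this by a compactness--and--contradiction argument that converts the almost-monotonicity hypothesis into near $0$-homogeneity of a blow-up limit. The intuition is read directly from \eqref{monotonicity formula}: the nonnegative difference
\[
\boldsymbol{\Theta}_s(v,\boldsymbol{0},2r)-\boldsymbol{\Theta}_s(v,\boldsymbol{0},\gamma^q r) \;=\; 2\int_{B_{2r}^+\setminus B_{\gamma^q r}^+} z^a \frac{|\mathbf{x}\cdot \nabla v|^2}{|\mathbf{x}|^{m+2-2s}}\,\D\mathbf{x}
\]
measures the annular weighted $L^2$-mass of the radial derivative; if this is small on a wide enough annulus, any compactness limit must be radially invariant, and therefore serves as the boundary $0$-symmetric comparison map $h$ required to verify $(0,\varepsilon)$-symmetry on $D_{2r}$.

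Suppose the lemma fails for some $\varepsilon_0>0$ and $\gamma\in(0,1/2)$. Then there exist $\delta_i\downarrow 0$, $q_i\to\infty$, $r_i\in(0,1/2)$, and minimizers $u_i\in H_\Lambda^s(D_4,N)$ with minimal extensions $v_i\in\dot W_a^{1,2}(\mathbb{R}_+^{m+1};N)$ such that
\[
\boldsymbol{\Theta}_s(v_i,\boldsymbol{0},2r_i)-\boldsymbol{\Theta}_s(v_i,\boldsymbol{0},\gamma^{q_i}r_i)\le \delta_i,
\]
while $u_i$ is not $(0,\varepsilon_0)$-symmetric on $D_{2r_i}$. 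Rescale by $\tilde v_i(\mathbf{x}) := v_i(2r_i\mathbf{x})$. A change of variables gives the scale invariance $\boldsymbol{\Theta}_s(\tilde v_i,\boldsymbol{0},\rho) = \boldsymbol{\Theta}_s(v_i,\boldsymbol{0},2r_i\rho)$, so monotonicity together with $I^a(u_i,D_4)\le\Lambda$ yields the uniform local estimate $\boldsymbol{\Theta}_s(\tilde v_i,\boldsymbol{0},\rho)\le 2\Lambda$ for all $\rho\le 1/(2r_i)$, and the hypothesis becomes
\[
\int_{B_1^+\setminus B_{\gamma^{q_i}/2}^+} z^a \frac{|\mathbf{x}\cdot\nabla\tilde v_i|^2}{|\mathbf{x}|^{m+2-2s}}\,\D\mathbf{x} \le \frac{\delta_i}{2}.
\]
By the scaling invariance of Definition~\ref{def:v minimising}, each $\tilde v_i$ is an $E^a$-minimizer relative to $D_{2/r_i}\supset D_4$.

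Next I apply Theorem~\ref{thm: compactness} (via Corollary~\ref{rmk:tangent map} in the subcase $r_i\to 0$) on every half-ball $B_\rho^+$: after passing to a subsequence, $\tilde v_i\to \tilde v_\infty$ strongly in $\dot W_a^{1,2}(B_\rho^+;N)$ for each $\rho>0$, with $\tilde v_\infty$ an $E^a$-minimizer relative to the relevant boundary set. Fatou and the strong convergence of $\nabla \tilde v_i$ on compact subsets of $B_1^+\setminus\{\boldsymbol{0}\}$ pass the display above through the limit to give $\mathbf{x}\cdot\nabla\tilde v_\infty\equiv 0$ a.e.\ in $B_1^+$. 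Hence $\tilde v_\infty$ agrees on $B_1^+$ with a boundary $0$-symmetric map $h:\overline{\mathbb{R}_+^{m+1}}\to N$ obtained by extending $\tilde v_\infty|_{\partial^+ B_1^+}$ radially, and one has $h_{\boldsymbol{0},2r_i}=h$. The compact embedding \eqref{eq:compact embedding} then yields
\[
\medint_{B_1^+}\bigl|v_{i,\boldsymbol{0},2r_i}(\mathbf{y}) - h_{\boldsymbol{0},2r_i}(\mathbf{y})\bigr|^{\gamma_0}\,\D\mathbf{y} = \medint_{B_1^+}|\tilde v_i - h|^{\gamma_0}\,\D\mathbf{y} \;\longrightarrow\; 0,
\]
which is below $\varepsilon_0$ for large $i$, contradicting the supposed failure of $(0,\varepsilon_0)$-symmetry.

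The main technical obstacle is the compactness step in the degenerate-weight, possibly shrinking-scale setting: one needs strong convergence of $\tilde v_i$ in $\dot W_a^{1,2}(B_\rho^+;N)$ rather than just weak, which is exactly the role of Theorem~\ref{thm: compactness} and the modified Luckhaus construction of Corollary~\ref{corollary:Luckhaus corollary}. The scale invariance of the $E^a$-minimizing property ensures this compactness applies uniformly to the rescaled sequence regardless of whether $r_i$ stays bounded away from $0$ or shrinks to $0$; once strong convergence is in hand, the passage to the limit in the monotonicity identity on the shrinking inner cutoff $\gamma^{q_i}/2\to 0$ is standard.
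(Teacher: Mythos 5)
Your proposal is correct and follows essentially the same compactness--contradiction route as the paper: negate the statement, rescale to unit scale, extract a limit of the minimal extensions, use the monotonicity identity to force the limit to be boundary $0$-homogeneous on $B_2^+$ (the inner radius $\gamma^{q_i}\to 0$ exhausts the punctured ball), and contradict the failure of $(0,\varepsilon)$-symmetry via strong $L^{\gamma_0}$ convergence. The only cosmetic difference is that you invoke the full strong $\dot W_a^{1,2}$ compactness of Theorem~\ref{thm: compactness} to pass the radial-derivative integral to the limit, whereas the paper gets by with weak convergence and lower semicontinuity of that quadratic functional; both are legitimate.
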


	\begin{proof} Assume there exist $\ep>0$ and $0<\gamma<1/2$ for which the statement is false. Again we assume that $r=1$. Then there exist a sequence of minimizing intrinsic $s$-harmonic
		maps  $u_i \in H_{\Lambda}^{s}(D_4,N)$ ($i=1,2,\ldots$) satisfying
		$$
		\boldsymbol{\Theta}_s(v_i,\boldsymbol{0},2 )-\boldsymbol{\Theta}_s\left(v_i,\boldsymbol{0}, \gamma^i \right) \leq \frac{1}{i},
		$$
		but none of $u_i$ is  $(0,\ep)$-symmetric on $D_2$. Up to a subsequence, we may assume that $v_i\wto v $ in $\dot{W}_a^{1,2}\left(\mathbb{R}_{+}^{m+1} ; N\right)$, $v_i\wto v $ in $W_a^{1,2}\left(B_{2}^+ ; N\right)$, $v_i\to v $ in $L^{\gamma_0}\left(B_{2}^+; N\right)$. Then the Monotonicity formula \eqref{monotonicity formula} implies that
		$$
		\int_{B_{2}^+}z^a|\mathbf{x}\cdot \nabla v|^2d\mathbf{x}\leq C\liminf_{i\rightarrow \infty} \int_{B_2^+}z^a\frac{|\mathbf{x}\cdot \nabla v_i|^2}{|\mathbf{x}|^{m+a+1}}d\mathbf{x}=0.
		$$			
		Thus, $v$ is boundary $0$-homogeneous in $B_{2}^+$. Extending $v$ to the whole upper space $\mathbb{R}^{m+1}_{+}$ by homogeneity, we denote it also by $v$. But then,  the strong convergence of $v_i\to v$ in $L^{\gamma_0}\left(B_2^+ ; N\right)$ implies that
		$	\medint_{B_2^+}\left|v_i-v\right|^{\gamma_0}<\ep	$
		for $i$ sufficiently large.
		Hence, $u_i$ is $(0,\ep)$-symmetric on $D_2$, which gives a contradiction.
	\end{proof}
	In the above proof we need to assume $u$ is a minimizing intrinsic $s$-harmonic map so as to use the monotonicity formula  \eqref{monotonicity formula}. The following lemma gives a quantitative geometric description on $k$-symmetry.
	
	\begin{lemma}[Quantitative cone splitting]\label{quantitative cone splitting}
		 When $m \geq 3$ let $s \in(0,1)$ and when $m=2$ let $s \in\left(0, 2/3\right)$. Given constants $\eta, \tau,\Lambda>0$, there exists $\ep=\ep(s, m, N, \Lambda$, $\eta, \tau)>0$ such that the following holds. Let $u\in H_{\Lambda}^{s}(D_4,N)$  is a minimizing intrinsic $s$-harmonic map, $x \in D_1$ and $0<r<1$. If
 $x \in \mathcal{S}_{\eta, r}^k(u)$ and $u$ is $(0, \ep)$-symmetric on $D_{2r}(x)$,		then there exists a $k$-dimensional affine subspace $V \subset  \mathbb{R}^{m}$ such that
		$$
		\left\{y \in D_r(x): u \text { is }(0,\ep)\text{-symmetric on } D_{2r}(y)\right\} \subset T_{\tau r}(V) \text {. }
		$$	
	\end{lemma}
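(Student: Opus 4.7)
The plan is to argue by contradiction, using the compactness Theorem~\ref{thm: compactness}, Remark~\ref{rmk: symmetry compactness}, and the classical cone-splitting principle ($0$-homogeneity about two distinct points forces translation invariance along the connecting direction). Suppose the lemma fails for some fixed $\eta,\tau,\Lambda>0$. Then one can find sequences $\varepsilon_i\downarrow 0$, minimizing intrinsic $s$-harmonic maps $u_i\in H_{\Lambda}^{s}(D_4,N)$, points $x_i\in D_1$ and scales $r_i\in(0,1)$ with $x_i\in\mathcal{S}_{\eta,r_i}^k(u_i)$, $u_i$ being $(0,\varepsilon_i)$-symmetric on $D_{2r_i}(x_i)$, but such that the set
\[
A_i := \{ y \in D_{r_i}(x_i) : u_i \text{ is } (0,\varepsilon_i)\text{-symmetric on } D_{2r_i}(y) \}
\]
is not contained in $T_{\tau r_i}(V)$ for any $k$-dimensional affine subspace $V\subset\mathbb{R}^m$. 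After the rescaling $\tilde u_i(y)=u_i(x_i+r_iy)$ I normalize $x_i=0$ and $r_i=1$; by Lemma~\ref{lemma2.7} the rescaled extensions $\tilde v_i$ are $E^a$-minimizing relative to the rescaled boundary domain, and the monotonicity formula~\eqref{monotonicity formula} combined with $E^a(v_i)\le\Lambda$ gives uniform local energy control $E^a(\tilde v_i,B_\rho^+)\le C(s,m,\Lambda)\rho^{m-2s}$ for $\rho\le 3$.

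Since $0\in A_i$ and the conclusion is violated, I extract $k+2$ points $y_i^0=0,y_i^1,\dots,y_i^{k+1}\in A_i\cap\overline{D_1}$ satisfying $\operatorname{dist}(y_i^{j+1},\operatorname{aff}(y_i^0,\dots,y_i^j))>\tau$ for $j=0,\dots,k$: at each stage the previous affine hull has dimension $\le k$, so embedding it into some $k$-plane $V$ and invoking $A_i\not\subset T_\tau(V)$ supplies the next point. Passing to a subsequence, $y_i^j\to y_\infty^j$ and the distance inequalities persist in the limit, so that $y_\infty^1,\dots,y_\infty^{k+1}$ are linearly independent in $\mathbb{R}^m$. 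By Theorem~\ref{thm: compactness} a further subsequence gives strong convergence $\tilde v_i\to\tilde v$ in $\dot W_a^{1,2}(B_\rho^+;N)$ (and hence in $L^{\gamma_0}$) for every admissible $\rho$, with $\tilde u=T|_{\mathcal{O}}\tilde v$; moreover $\tilde v$ is itself $E^a$-minimizing relative to $D_3$. Because $y_i^j\in A_i$, there are boundary $0$-symmetric competitors $h_i^j$ with $\medint_{B_2^+(\mathbf y_i^j)}|\tilde v_i-h_i^j(\cdot-\mathbf y_i^j)|^{\gamma_0}\le\varepsilon_i$. Combining the strong $L^{\gamma_0}$ convergence of $\tilde v_i$ on half-balls centred at $\mathbf y_i^j\to\mathbf y_\infty^j$ with $\varepsilon_i\to 0$ shows that the shifted competitors converge to $\tilde v$ in $L^{\gamma_0}(B_2^+(\mathbf y_\infty^j))$. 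Since each such competitor is $0$-homogeneous about $\mathbf y_i^j$, the limit $\tilde v$ is $0$-homogeneous about $\mathbf y_\infty^j=(y_\infty^j,0)$ in that half-ball, and the homogeneity relation then extends this symmetry to all of $\mathbb{R}_+^{m+1}$.

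Having $\tilde v$ boundary $0$-symmetric at the origin and at each $\mathbf y_\infty^j$ ($j=1,\dots,k+1$), iterated cone-splitting yields translation invariance of $\tilde v$ along every $y_\infty^j$, hence along $V_\infty:=\operatorname{span}(y_\infty^1,\dots,y_\infty^{k+1})\subset\mathbb{R}^m$, a $(k+1)$-dimensional linear subspace. Thus $\tilde v$ itself is boundary $(k+1)$-symmetric, and using $\tilde v$ as competitor together with the strong convergence $\tilde v_i\to\tilde v$ in $L^{\gamma_0}(B_1^+)$ shows that $\tilde u_i$ is $(k+1,\eta)$-symmetric on $D_1$ for all $i$ large, contradicting $0\in\mathcal{S}_{\eta,1}^k(\tilde u_i)$. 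The hardest step will be the third one: transferring the $L^{\gamma_0}$-closeness of $\tilde v_i$ to the drifting homogeneous competitors $h_i^j$ into the global boundary $0$-symmetry of $\tilde v$ about each $\mathbf y_\infty^j$. It relies crucially on the strong (not merely weak) convergence supplied by Theorem~\ref{thm: compactness}, whose proof itself rests on the modified Luckhaus lemma~\ref{lemma:modified Luckhaus}.
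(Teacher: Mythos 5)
Your proposal is correct and follows essentially the same route as the paper's proof: a compactness/contradiction argument in which the rescaled minimizers converge strongly (via Theorem~\ref{thm: compactness}) to a limit that is boundary $0$-symmetric about $k+2$ suitably spread points, hence boundary $(k+1)$-symmetric by cone splitting, contradicting $x\in\mathcal{S}^k_{\eta,r}(u)$. You simply spell out more explicitly some steps the paper leaves implicit (the rescaling normalization, the iterative extraction of the $\tau$-separated points, and the transfer of homogeneity from the drifting competitors $h_i^j$ to the limit), but the underlying argument is the same.
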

	\begin{proof}	Assume without loss of generality that $x=0$ and $r=1$. We use a contradiction argument. Thus, for  given $\eta, \tau>0$, there exist a sequence $\left\{u_i\right\}$ with $I^{a}(u_i,D_4)\le\La$ such that $0 \in \mathcal{S}_{\eta, 1}^k\left(u_i\right)$ for all $i, u_i$ is $(0,1/i)$-symmetric on $D_2$ and there exist points $\left\{x_1^i, x_2^i, \ldots, x_{k+1}^i\right\} \subset D_1$ satisfying the following two conditions:
		
		$\bullet$ $u_i$ is $(0,1/i)$-symmetric on each $D_2(x_j^i)$ for $j=1, \ldots, k+1$. That is, there exists a $v_i \in \dot{W}_a^{1,2}\left(\mathbb{R}_{+}^{m+1} ; N\right)$ with $\left.T\right|_{D_4} v_i=u_i$ such that $I^a(u,D_4)=E^a(v)$,
		\[\int_{B^+_{2}(\mathbf{x}_j^i)} |v_i-h_{ij}|^{\gamma_0} d\mathbf{x}\le 1/i\] for some boundary $0$-symmetric function $h_{ij}$.
		
		$\bullet$   dist$\left(x_j^i, \operatorname{span}\left\{0, x_1^i, \ldots, x_{j-1}^i\right\}\right) \geq \tau$ for all $j=1, \ldots, k+1$.
		
		After passing to a subsequence, there exists a map $v$ such that         $v_i \wto v$ in $\dot{W}_a^{1,2}\left(\mathbb{R}_{+}^{m+1} ; N\right)$, $v_i\wto v $ in $W_a^{1,2}\left(B_{4}^+ ; N\right)$, $v_i\to v $ in $L^{\gamma_0}\left(B_{4}^+; N\right)$; and there exist points $\left\{x_1, \ldots, x_{k+1}\right\} \subset \overline{D}_1$ such that v is  boundary $0$-symmetric on  $B_2^{+}\left(x_j\right)$ for all $ j=0,1, \ldots, k+1$.
The distance relations are also preserved:  we have  dist$\left(x_j, \operatorname{span}\left\{x_0, x_1, \ldots, x_{j-1}\right\}\right) \geq \tau$ for all $j=0, \ldots, k+1$.
		
		It is now straightforward to verify that
$v$ is boundary $(k+1)$-symmetric on $B_1^{+}$.  But then,  the strong convergence  $v_i\to v $ in $L^{\gamma_0}\left(B_{1}^+; N\right)$  gives a contradiction to the assumption $0 \in \mathcal{S}_{\eta, 1}^k\left(u_i\right)$ for $i\gg 1$.		\end{proof}

	To continue, let us introduce the following notation.
	
	\begin{definition} Let $u\in H_{\Lambda}^{s}(D_4,N)$  is a minimizing intrinsic $s$-harmonic map, $v \in \dot{W}_a^{1,2}\left(\mathbb{R}_{+}^{m+1} ; N\right)$ with $\left.T\right|_{D_4} v=u$ such that $I^a(u,D_4)=E^a(v)$. For $x \in D_1$ and $0 \leq s<t<1$, denote
		$$
		\mathcal{W}_{s, t}(x, u):=\boldsymbol{\Theta}_s(v,\mathbf{x}, t)-\boldsymbol{\Theta}_s(v,\mathbf{x},s) \geq 0 .
		$$
	\end{definition}
	
	Notice that, by monotonicity formula \eqref{monotonicity formula}, for $\left(s_1, t_1\right),\left(s_2, t_2\right)$ with $t_1 \leq s_2$,
	$$
	\mathcal{W}_{s_1, t_1}(x, u)+\mathcal{W}_{s_2, t_2}(x, u) \leq \mathcal{W}_{s_1, t_2}(x, u)
	$$
	with equality if $t_1=s_2$.
	Given constants $0<\gamma<1 / 2$ , $\delta>0$ and $q \in \mathbb{Z}^{+}$(these parameters will be fixed suitably in Lemma \ref{covering lemma}),  let $Q$ be the number of positive integers $j$ such that
	$$
	\mathcal{W}_{\gamma^{j+q}, \gamma^j}(x, u)>\delta.
	$$
	Then there has
	\begin{equation}\label{Q}
		Q \leq \frac{C_1(q+1)}{4^{m-2s}}\Lambda \delta^{-1},
	\end{equation}
	where $C_1=C_1(m,s)$. To see this, just note that
	$$
	Q \de \le \sum_{j=1}^\wq \mathcal{W}_{\gamma^{j+q}, \gamma^j}(x, u) \leq (q+1)\mathcal{W}_{0,1}(x, u) \leq (q+1) \boldsymbol{\Theta}_s(v,\boldsymbol{0},2)\leq \frac{C_1(q+1)}{4^{m-2s}}E^{a}\left(v\right).
	$$

	Following \cite{Cheeger-H-N-2013, Cheeger-H-N-2015, Cheeger-Naber-2013-Invent, Cheeger-Naber-2013-CPAM}, for each $x \in D_3$, we define a sequence $\left\{T_j(x)\right\}_{j \geq 1}$ with values in $\{0,1\}$ as the following manner. For each $j \in \mathbb{Z}^{+}$define
	$$
	T_j(x)= \begin{cases}
		1, \quad \text { if } \mathcal{W}_{\gamma^{j+q}, \gamma^j}(x, u)>\delta, \\
		0, \quad\text { if } \mathcal{W}_{\gamma^{j+q}, \gamma^j}(x, u)\leq\delta.
	\end{cases}
	$$
	\eqref{Q} implies that
	$$\sum_{j\ge 1}T_j(x)\le Q,\qquad \forall\,x\in D_3.$$
	That is,  there exist at most $Q$ nonzero entries in the sequence.  Thus, for each $\alpha$-tuple $T^\alpha=\left(T_j^\alpha\right)_{1 \leq j \leq \alpha}$ with entries in $\{0,1\}$, by defining
	$$
	E_{T^\alpha}(u)=\left\{x \in D_1 \mid T_j(x)=T_j^\alpha \text { for } 1 \leq j \leq \alpha\right\},
	$$
	we obtain a  decomposition of $D_1$ by at most $\binom{\alpha}{Q} \leq \alpha^Q$ non-empty such sets $E_{T^\alpha}(u)$, even though \textit{a priori} there are  $2^\alpha$ choices of such $\alpha$-tuple. This estimate plays an important role in the volume estimates below.
	
	\begin{lemma}[Covering Lemma]\label{covering lemma} When $m \geq 3$ let $s \in(0,1)$ and when $m=2$ let $s \in \left(0, 2/3\right)$. There exists $c_0(m)<\infty$ such that, for each $\alpha\ge 1$, the set $\mathcal{S}_{\eta, \gamma^\alpha}^j(u) \cap E_{T^\alpha}(u)$ can be covered by at most $c_0\left(c_0 \gamma^{-m}\right)^Q\left(c_0 \gamma^{-j}\right)^{\alpha-Q}$ balls of radius $\gamma^\alpha$.
	\end{lemma}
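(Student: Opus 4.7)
My approach is to argue by induction on $\alpha\ge 1$, combining the two inputs just established: Lemma \ref{quantitative rigidity} converts each zero $T$-bit into approximate $0$-symmetry of $u$, and Lemma \ref{quantitative cone splitting} then forces the quantitative singular set into a thin tubular neighborhood of a $j$-plane. I will fix $\gamma\in(0,1/2)$ once and for all and choose $(\epsilon,\delta,q,\tau)$ with $\tau\le\gamma/4$ so that both lemmas can be invoked at the relevant scales, absorbing any resulting constant factors into the final $c_0=c_0(m)$.

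It is convenient to start the induction at $\alpha=0$, where $T^0$ is the empty tuple and $\{D_1\}$ is a cover by one ball of radius $\gamma^0=1$, matching the bound $c_0$. For the inductive step from $\alpha-1$ to $\alpha$, write $T^{\alpha-1}$ for the truncation of $T^\alpha$ to its first $\alpha-1$ coordinates and $Q_\beta=\sum_{i=1}^\beta T_i^\alpha$. Using the monotonicity of $\mathcal{S}_{\eta,r}^j$ in $r$, the inclusion
\[
\mathcal{S}_{\eta,\gamma^\alpha}^j(u)\cap E_{T^\alpha}(u) \subset \mathcal{S}_{\eta,\gamma^{\alpha-1}}^j(u)\cap E_{T^{\alpha-1}}(u)
\]
lets me invoke the inductive hypothesis to cover the left-hand side by a collection of balls $\{B_{\gamma^{\alpha-1}}(x_k)\}$ of cardinality $\le c_0(c_0\gamma^{-m})^{Q_{\alpha-1}}(c_0\gamma^{-j})^{\alpha-1-Q_{\alpha-1}}$. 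I then refine each $B_{\gamma^{\alpha-1}}(x_k)$ to scale $\gamma^\alpha$ according to the value of $T_\alpha^\alpha$: if $T_\alpha^\alpha=1$, a standard Vitali cover contributes $\le c_0\gamma^{-m}$ balls; if $T_\alpha^\alpha=0$ and the intersection $B_{\gamma^{\alpha-1}}(x_k)\cap\mathcal{S}_{\eta,\gamma^\alpha}^j(u)\cap E_{T^\alpha}(u)$ is non-empty, I pick any point $x_0$ in it, and then $\mathcal{W}_{\gamma^{\alpha+q},\gamma^\alpha}(x_0,u)\le\delta$ combined with Lemma \ref{quantitative rigidity} (at the appropriately rescaled centre) delivers $(0,\epsilon)$-symmetry of $u$ on a ball $D_{2r}(x_0)$ with $r$ comparable to $\gamma^{\alpha-1}$. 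Since $x_0\in\mathcal{S}_{\eta,r}^j(u)$ (as $\mathcal{S}_{\eta,\cdot}^j$ is decreasing in its radius), Lemma \ref{quantitative cone splitting} at scale $r$ with $\tau\le\gamma/4$ produces a $j$-dimensional affine subspace $V_k\subset\mathbb{R}^m$ such that every other point of the intersection (being itself $(0,\epsilon)$-symmetric by the same rigidity argument) lies in $T_{\tau r}(V_k)$; this slab intersected with $B_{\gamma^{\alpha-1}}(x_k)$ admits an elementary cover by $\le c_0\gamma^{-j}$ balls of radius $\gamma^\alpha$. In either case the refinement factor matches the update $Q_\alpha=Q_{\alpha-1}+T_\alpha^\alpha$, closing the induction.

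The principal technical obstacle is aligning the scale pair $(\gamma^{\alpha+q},\gamma^\alpha)$ appearing in the definition of $T_\alpha$ with the scale pair $(\gamma^q r,2r)$ with $r\sim\gamma^{\alpha-1}$ demanded by the rigidity lemma at the cone-splitting scale; a priori $\mathcal{W}_{\gamma^{\alpha+q},\gamma^\alpha}\le\delta$ does not yield $\mathcal{W}_{\gamma^q r,2r}\le\delta$ directly. I close this gap by enlarging $q$ (replacing $q$ by $q+q_0$ for a fixed absolute integer $q_0$ depending only on $\gamma\le 1/2$) and exploiting the monotone decomposition $\mathcal{W}_{s_1,t_2}=\mathcal{W}_{s_1,t_1}+\mathcal{W}_{t_1,t_2}$ together with the global bound $\boldsymbol{\Theta}_s(v,\mathbf{y},1)\le C\Lambda$ to transport smallness across the affected interval at the cost of a fixed multiplicative loss in $\delta$. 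After this one-off redefinition of constants, Lemma \ref{quantitative rigidity} applies at the cone-splitting scale without further modification, and all remaining combinatorial factors (Vitali multiplicity, slab-covering count) are absorbed into $c_0=c_0(m)$.
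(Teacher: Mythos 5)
Your overall scheme---induction on $\alpha$, a two-case refinement of each ball (a volume bound $c\,\gamma^{-m}$ when the relevant bit is $1$, a $j$-plane tube covering by $c\,\gamma^{-j}$ balls via Lemma \ref{quantitative rigidity} plus Lemma \ref{quantitative cone splitting} when it is $0$), and the count of at most $Q$ bad steps---is exactly the paper's argument. The genuine problem is the scale mismatch you yourself flag, and your proposed bridge does not close it. When you refine a ball $D_{\gamma^{\alpha-1}}(x_k)$ into balls of radius $\gamma^{\alpha}$ you invoke the bit $T_\alpha$, i.e. $\mathcal{W}_{\gamma^{\alpha+q},\gamma^{\alpha}}(x_0,u)=\boldsymbol{\Theta}_s(v,\mathbf{x}_0,\gamma^{\alpha})-\boldsymbol{\Theta}_s(v,\mathbf{x}_0,\gamma^{\alpha+q})\leq\delta$; but to apply Lemma \ref{quantitative cone splitting} across the whole ball, at scale $r\sim\gamma^{\alpha-1}$, you need $(0,\epsilon)$-symmetry on balls of radius $2r$, hence via Lemma \ref{quantitative rigidity} smallness of $\boldsymbol{\Theta}_s(v,\mathbf{x}_0,2\gamma^{\alpha-1})-\boldsymbol{\Theta}_s(v,\mathbf{x}_0,\gamma^{q'}\gamma^{\alpha-1})$. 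Decomposing this by monotonicity, the missing piece is $\boldsymbol{\Theta}_s(v,\mathbf{x}_0,2\gamma^{\alpha-1})-\boldsymbol{\Theta}_s(v,\mathbf{x}_0,\gamma^{\alpha})$, which is controlled only by $C\Lambda$ and by nothing small: enlarging $q$ extends the interval of smallness \emph{downward} in scale, never upward, and a ``fixed multiplicative loss in $\delta$'' cannot absorb an energy drop of order $\Lambda$. The failure is not hypothetical: for a tuple with $T^{\alpha}_{\alpha-1}=1$ and $T^{\alpha}_{\alpha}=0$ the drop across $[\gamma^{\alpha},\gamma^{\alpha-1}]$ may be large, the points of $\mathcal{S}^j_{\eta,\gamma^{\alpha}}(u)\cap E_{T^{\alpha}}(u)$ inside $D_{\gamma^{\alpha-1}}(x_k)$ are then only $(0,\epsilon)$-symmetric at the small scale $\gamma^{\alpha}$, and symmetry at that scale says nothing about how these points are arranged across the larger ball, so they need not lie in a $\gamma^{\alpha}$-tube of any $j$-plane and the $c\,\gamma^{-j}$ count can fail at that step.

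The repair is not a bridge between scales but a shift of index: when refining balls of radius $\gamma^{\alpha-1}$ use the bit whose scale matches that radius, namely $T_{\alpha-1}$ (equivalently, as in the paper, use $T_\alpha$ when refining balls of radius $\gamma^{\alpha}$ into balls of radius $\gamma^{\alpha+1}$). Then Lemma \ref{quantitative rigidity} gives $(0,\epsilon)$-symmetry on $D_{2\gamma^{\alpha-1}}(x_0)$ and on $D_{2\gamma^{\alpha-1}}(y)$ for every $y$ in the set under consideration, and Lemma \ref{quantitative cone splitting} with $r=\gamma^{\alpha-1}$ and $\tau=\gamma$ places the set in a $\gamma^{\alpha}$-tubular neighborhood of a $j$-dimensional plane, yielding the $c\,\gamma^{-j}$ refinement; the bad refinement steps are exactly those indices $\beta$ whose bit equals $1$, of which there are at most $Q$ (up to one extra crude step absorbed into $c_0$), so the stated bound follows. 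With this one change your argument coincides with the paper's proof; as written, however, the key cone-splitting step is unjustified.
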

	\begin{proof}
		For fixed $\eta, \gamma$, let $\tau=\gamma$ and choose $\ep$ as in Lemma \ref{quantitative cone splitting}. For this $\ep, \gamma$, by Lemma \ref{quantitative rigidity} there exist $\delta>0$ and $q \in \mathbb{Z}^{+}$such that if
		$$
		\boldsymbol{\Theta}_s(v,\mathbf{x},2\gamma^j)-\boldsymbol{\Theta}_s(v,\mathbf{x},\gamma^{j+q})\leq \delta,
		$$
		then $u$ is $\left(0,\ep\right)$-symmetric on $D_{2\gamma^j}(x)$. Fix this $\delta, q$ throughout the proof and define $T_j(x)$ accordingly.
		
		We now determine the covering by induction argument. For $\alpha=0$, we can simply choose a minimal covering of $\mathcal{S}_{\eta, 1}^j(u) \cap D_1$ by at most $c(m)$ balls of radius $1$ with centers in $\mathcal{S}_{\eta, 1}^j(u) \cap D_1$. Suppose now the statement holds for all $\alpha$-tuples, and given a $\alpha+1$ tuple $T^{\alpha+1}$. Here are two simple observations. First, by definition, we have $\mathcal{S}_{\eta, \gamma^{\alpha+1}}^j(u) \subset \mathcal{S}_{\eta, \gamma^\alpha}^j(u)$. Next, by denoting $T^\alpha$ the $\alpha$ tuple obtained by dropping the last entry from $T^{\alpha+1}$, we immediately get $E_{T^{\alpha+1}}(u) \subset E_{T^\alpha}(u)$.
		
		We determine the covering recursively. For each ball $D_{\gamma^\alpha}(x)$ in the covering of $\mathcal{S}_{\eta, \gamma^\alpha}^j(u)\cap E_{T^\alpha}(u)$, we will take a minimal covering of $D_{\gamma^\alpha}(x) \cap \mathcal{S}_{\eta, \gamma^\alpha}^j(u) \cap E_{T^\alpha}(u)$ by balls of radius $\gamma^{\alpha+1}$ as follows.  In the case $T_\alpha^\alpha=1$, employing a standard volume argument, we can bound on the number of balls to get an upper bound on the covering by
		$$
		c(m) \gamma^{-m}.
		$$
		
	In the other case $T_\alpha^\alpha=0$, we can do better. In this case we have both $T_\alpha(x)=0$ and $T_\alpha(y)=0$ for all $y\in D_{\gamma^\alpha}(x)\cap E_{T^\alpha}(u)$, hence $\mathcal{W}_{\gamma^{\alpha+q}, \gamma^\alpha}(x, u)\leq \delta$, $\mathcal{W}_{\gamma^{\alpha+q}, \gamma^\alpha}(y, u)\leq \delta$.  By the choice of $\delta, q$, this implies that $u$ is $\left(0,\ep\right)$-symmetric on $D_{2\gamma^\alpha}(x)$ and $u$ is $\left(0,\ep\right)$-symmetric on $D_{2\gamma^\alpha}(y)$.  Recall that  $x \in \mathcal{S}_{\eta, \gamma^\alpha}^j(u)$. Hence we can apply Lemma \ref{quantitative cone splitting} to conclude that the set $E_{T^\alpha}(u) \cap D_{\gamma^\alpha}(x)$ is contained in a $\gamma^{\alpha+1}$ tubular neighborhood of some $j$ dimensional plane $V$.  Therefore, in this case, we can cover the intersection with the smaller number of balls
		$$
		c(m) \gamma^{-j}.
		$$
		
		Given any $\alpha>0$ and $E_{T^\alpha}(u)$, the number of times we need to apply the weaker estimate is bounded above by $Q$. Thus, the proof is complete.
	\end{proof}

	We are now ready to prove Theorem \ref{volume estimate}.
	
	\begin{proof}[Proof of Theorem \ref{volume estimate}] Choose $\gamma<1 / 2$ such that $\gamma \leq c_0^{-2 / \eta}$, where $c_0$ is as in Lemma \ref{covering lemma}. Then $c_0^\alpha \leq(\gamma^\alpha)^{-\eta / 2}$ and since exponentials grow faster than polynomials,
		$$
		\alpha^Q \leq c(Q) c_0^\alpha \leq c(\eta, m, Q)\left(\gamma^\alpha\right)^{-\eta / 2} .
		$$
		
		Since	$ \operatorname{Vol}\left(D_{\gamma^\alpha}(x)\right)=\omega_m \gamma^{\alpha m}$
		and $D_1$ can be decomposed into at most $\alpha^Q$ sets $E_{T^\alpha}(u)$ for any $\alpha$, we have
		$$
		\begin{aligned}
			\operatorname{Vol}\left(T_{\gamma^\alpha}\left(\mathcal{S}_{\eta, \gamma^\alpha}^j\right) \cap D_1\right) & \leq \alpha^{Q}\left[\left(c_0 \gamma^{-m}\right)^Q\left(c_0 \gamma^{-j}\right)^{\alpha-Q}\right] \omega_m \gamma^{\alpha m} \\
			& \leq c(m, Q, \eta) \alpha^Q c_0^\alpha\left(\gamma^\alpha\right)^{m-j} \\
			& \leq c(m, Q, \eta)\left(\gamma^\alpha\right)^{m-j-\eta}.
		\end{aligned}
		$$
		Thus, for any $0<r<1$, by choosing $\alpha>0$ such that $\gamma^{\alpha+1} \leq r<\gamma^\alpha$, we deduce that
		$$
		\begin{aligned}
			\operatorname{Vol}\left(T_r\left(\mathcal{S}_{\eta, r}^j\right) \cap D_1\right) & \leq \operatorname{Vol}\left(T_{\gamma^\alpha}\left(\mathcal{S}_{\eta, \gamma^\alpha}^j\right) \cap D_1\right) \\
			& \leq c(m, Q, \eta)\left(\gamma^\alpha\right)^{m-j-\eta} \\
			& \leq c(m, Q, \eta)\left(\gamma^{-1} r\right)^{m-j-\eta} \\
			& \leq c(m, s, \eta, N, \Lambda) r^{m-j-\eta}.
		\end{aligned}
		$$
		The proof is complete.
	\end{proof}

	\section{Proof of Theorems \ref{thm: integrability} and \ref{thm: regularity scale estimate}}\label{sec: regularity results}

We first prove another type of $\varepsilon$-regularity theorem.

\begin{theorem}\label{thm: sym-to-reg}
    For $m \geq 3$, let $s \in [1/2, 1)$, and for $m = 2$, let $s \in [1/2, 2/3)$. Given $\Lambda > 0$, assume that $u \in H_{\Lambda}^{s}(D_4, N)$ is a minimizing intrinsic $s$-harmonic map. There exists a constant $\delta = \delta(m, \Lambda, s, N) > 0$ such that, if $u$ is $(m-1, \delta)$-symmetric on $D_{2}$, then
    \[
    r_{u}(0) \geq \theta,
    \]
    where $\theta = \theta(m, N, s) \in (0,1)$ is the constant defined in Theorem~\ref{thm: partial regularity}.
\end{theorem}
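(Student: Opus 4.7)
The plan is to argue by contradiction, using Theorem~\ref{thm: compactness} together with a rigidity computation that crucially exploits $s \ge 1/2$ (equivalently $a \le 0$). Suppose the conclusion fails: then there exist $\delta_i \downarrow 0$ and minimizing maps $u_i \in H_\Lambda^s(D_4, N)$ such that each $u_i$ is $(m-1,\delta_i)$-symmetric on $D_2$, but $r_{u_i}(0) < \theta$. By the definition of $(m-1,\delta_i)$-symmetry, for each $i$ we can pick a minimal extension $v_i \in \dot W_a^{1,2}(\mathbb{R}_+^{m+1}; N)$ with $T|_{D_4}v_i = u_i$ and $I^a(u_i,D_4)=E^a(v_i)$, and a boundary $(m-1)$-symmetric map $h_i$ with underlying $(m-1)$-plane $V_i \subset \partial\mathbb{R}_+^{m+1}$, so that
\[
\frac{1}{|B_2^+|}\int_{B_2^+} |v_i - h_i|^{\gamma_0}\, \mathrm{d}\mathbf{y} \le \delta_i.
\]

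First I would pass to the limit. Theorem~\ref{thm: compactness} supplies a subsequence along which $v_i \to v$ strongly in $\dot W_a^{1,2}(B_\rho^+; N)$ for every $\rho$ with $\overline{\partial^0 B_\rho^+} \subset D_4$; the limit $v$ is itself $E^a$-minimizing relative to $D_4$ with trace $u = T|_{D_4}v$, and $u_i \to u$ strongly in $H_{\mathrm{int}}^s(\omega,N)$ for every $\omega \subset\subset D_4$. After a further subsequence so that $V_i \to V$ in the Grassmannian, and using that the $h_i$ are uniformly bounded (since $N$ is compact) together with $h_i \to v$ in $L^{\gamma_0}(B_2^+)$ (obtained by combining $\frac{1}{|B_2^+|}\int |v_i-h_i|^{\gamma_0} \to 0$ with the strong $L^{\gamma_0}$-convergence of the $v_i$), the $0$-homogeneity and translation invariance along $V_i$ transfer to the limit. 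Hence $v|_{B_2^+}$ agrees a.e.\ with a boundary $(m-1)$-symmetric map $\hat{h}$ with respect to $V$.

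The hard part is the rigidity step: I will show that such a finite-energy $\hat{h}$ must be constant when $a \le 0$. Decomposing $\mathbb{R}_+^{m+1} = V \oplus V^\perp \oplus \mathbb{R}_+$ with $V^\perp$ one-dimensional, the combined invariance forces $\hat{h}(v,t,z)=g(\theta)$ for some $g:[0,\pi]\to N$, where $\theta$ is the polar angle in the half-plane $V^\perp \oplus \mathbb{R}_+$. Integrating out the $V$-variable and then passing to polar coordinates in $(t,z)$ yields
\[
E^a(\hat{h}; B_2^+) = c_{m-1} \bigg(\int_0^2 r^{a-1}(4-r^2)^{(m-1)/2}\,\mathrm{d}r\bigg) \bigg(\int_0^\pi \sin^a\theta\,|g'(\theta)|^2\,\mathrm{d}\theta\bigg).
\]
For $a = 1-2s \le 0$ the radial factor is $+\infty$, whereas $E^a(\hat{h}; B_2^+) = E^a(v; B_2^+) \le \Lambda$; so the angular integral must vanish, and since $\sin^a\theta > 0$ on $(0,\pi)$ this forces $g' \equiv 0$. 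Consequently $\hat{h}$ is a constant $c \in N$ and $v \equiv c$ on $B_2^+$. This is exactly where the hypothesis $s \ge 1/2$ is indispensable: for $a > 0$ the radial integral converges and nontrivial boundary $(m-1)$-symmetric profiles of finite energy exist, so this rigidity — and the theorem as stated — genuinely fails below $1/2$.

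Finally I would invoke partial regularity. Since $v \equiv c$ on $B_2^+$, $\boldsymbol{\Theta}_s(v,\boldsymbol{0},1) = 0$, and the strong $\dot W_a^{1,2}$-convergence forces $\boldsymbol{\Theta}_s(v_i,\boldsymbol{0},1) \to 0$. For $i$ large enough the normalized energy lies below the threshold $\varepsilon = \varepsilon(m,N,s)$ of Theorem~\ref{thm: partial regularity}, which yields higher-order Lipschitz estimates for $u_i = T|_{D_4}v_i$ over $D_\theta$ (applied with $R=1$ and $l\ge 1$). Combined with the strong convergence $u_i \to c$ in $H_{\mathrm{int}}^s$, a standard compactness/bootstrap argument upgrades this to $u_i \to c$ in $C^1(D_\theta)$, so $\theta\,\sup_{D_\theta}|\nabla u_i| \le 1$ for all sufficiently large $i$. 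Hence $r_{u_i}(0) \ge \theta$, contradicting $r_{u_i}(0) < \theta$ and completing the argument.
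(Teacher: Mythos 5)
Your proposal is correct, and its overall architecture (contradiction, compactness via Theorem~\ref{thm: compactness}, rigidity of boundary $(m-1)$-symmetric limits for $a\le 0$, then $\varepsilon$-regularity) mirrors the paper's, which splits the statement into Lemma~\ref{lem: Symmetry self-improvement} (symmetry self-improvement) and Lemma~\ref{lemma: new epsilon regularity} ($(m,\epsilon)$-regularity). The genuine difference is in the crucial rigidity step. The paper shows that a nonconstant finite-energy boundary $(m-1)$-symmetric map is impossible for $a\le0$ by a packing argument: $0$-homogeneity gives energy $c\,(4^{-j})^{m-1+a}$ on each rescaled ball, translation invariance along the $(m-1)$-plane produces $\gtrsim (4^{j})^{m-1}$ disjoint such balls at scale $j$, and summing $\sum_j 4^{-ja}$ diverges for $a\le0$. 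You instead reduce the map explicitly to a one-variable angular profile $g(\theta)$ on the half-plane $V^\perp\oplus\mathbb{R}_+$ and factor the weighted energy over $B_2^+$ into a radial integral $\int_0^2 r^{a-1}(4-r^2)^{(m-1)/2}\,\mathrm{d}r$ times an angular integral; the divergence of the radial factor for $a\le0$ (read via Tonelli: if the angular factor were positive the double integral would be infinite, contradicting $E^a(v)\le\Lambda$) forces $g'\equiv0$. Both hinge on the same scaling exponent $m-1+a$ versus $m-1$, but your computation is more explicit and makes transparent exactly where $s\ge1/2$ enters, at the cost of needing the (standard, Fubini-type) justification that the sliced profile $g$ is weakly differentiable and that the energy factorizes; the paper's counting argument is softer and avoids polar coordinates. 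Your concluding step — small normalized energy plus Theorem~\ref{thm: partial regularity} yielding $r_{u_i}(0)\ge\theta$ for large $i$ — is at the same level of detail as the paper's own treatment in Lemma~\ref{lemma: new epsilon regularity}, so I see no gap there either.
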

	
	The proof relies on the following lemmata.	The first
	is an $\ep$-regularity lemma, which shows that high order
	symmetry implies regularity.
	
	\begin{lemma}[$(m,\epsilon)$-Regularity]\label{lemma: new epsilon regularity}
		When $m \geq 3$ let $s \in(0,1)$ and when $m=2$ let $s \in \left(0, 2/3\right)$. Given $\Lambda>0$ and assume $u\in H_{\Lambda}^{s}(D_4,N)$ is a minimizing intrinsic $s$-harmonic map.  There exists $\ep(m,\La,s,N)>0$ such that
		\[
		r_{u}(0)\ge\theta
		\]
		whenever $u$ is $(m,\ep)$-symmetric on $D_2$.
		\end{lemma}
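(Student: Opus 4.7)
The strategy is proof by contradiction combined with a compactness argument. Suppose the statement were false: then there would exist a sequence of minimizing intrinsic $s$-harmonic maps $u_i \in H^s_{\Lambda}(D_4, N)$ together with minimal extensions $v_i \in \dot{W}^{1,2}_a(\mathbb{R}^{m+1}_+; N)$ and boundary $m$-symmetric functions $h_i : \overline{\mathbb{R}^{m+1}_+} \to N$ satisfying
\[
\medint_{B_2^+} |v_i - h_i|^{\gamma_0}\, d\mathbf{y} \leq 1/i, \qquad r_{u_i}(0) < \theta.
\]
The first observation is that any boundary $m$-symmetric function must be (essentially) constant on $B_2^+$: translation invariance with respect to the whole of $\partial \mathbb{R}^{m+1}_+ = \mathbb{R}^m$ reduces $h_i(x,z)$ to a function of $z$ alone, and $0$-homogeneity then collapses that residual dependence. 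Writing $h_i \equiv c_i \in N$ on $B_2^+$, and using the compactness of $N$, I would pass to a subsequence with $c_i \to c \in N$.

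Next, the uniform bound $E^a(v_i) \leq \Lambda$ combined with the compactness Theorem~\ref{thm: compactness} (applied on $\mathbf{y} = \mathbf{0}$, $\rho = 1$) supplies, along a further subsequence, a limit $v \in \dot{W}^{1,2}_a(\mathbb{R}^{m+1}_+; N)$ with $v_i \to v$ strongly in $\dot{W}^{1,2}_a(B_1^+; N)$ and in $L^{\gamma_0}(B_2^+; N)$. Together with $c_i \to c$ and the near-constant condition, this forces $v \equiv c$ on $B_2^+$. Strong gradient convergence then yields
\[
\mathbf{\Theta}_s(v_i, \mathbf{0}, 1) = \int_{B_1^+} z^a |\nabla v_i|^2\, d\mathbf{x} \longrightarrow \int_{B_1^+} z^a |\nabla v|^2\, d\mathbf{x} = 0,
\]
so for all $i$ sufficiently large this normalized energy lies below the $\varepsilon$ appearing in Theorem~\ref{thm: partial regularity}.

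I would then invoke Theorem~\ref{thm: partial regularity} with $R=1$ and $l=1$: once the $\varepsilon$-smallness holds, the tangential derivatives $\partial_j v_i$ ($j = 1,\ldots,m$) are uniformly bounded in $C^{0,1}(\overline{B_{\theta}^+})$ by constants depending only on $m, N, s$. Via Arzelà–Ascoli and the identified limit $v \equiv c$, this upgrades strong $L^{\gamma_0}$-convergence to $C^1$-convergence of $u_i = T|_{D_4} v_i$ to the constant $c$ on $\overline{D_{\theta}}$. In particular $\sup_{D_\theta}|\nabla u_i| \to 0$, so for $i$ large enough $\theta\cdot \sup_{D_\theta}|\nabla u_i| \leq 1$, i.e. $r_{u_i}(0) \geq \theta$, contradicting the standing hypothesis.

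The main technical hurdle is confirming that the Lipschitz estimates produced by Theorem~\ref{thm: partial regularity} are \emph{uniform in} $i$: the constant governing $\|\partial_j v_i\|_{C^{0,1}(\overline{B_\theta^+})}$ must depend only on the structural data $m, N, s$ and the energy threshold $\Lambda$, not on $v_i$ itself. This uniformity is implicit in the Caccioppoli/reverse-Hölder machinery behind Theorem~\ref{thm: partial regularity}, but one must verify that the estimates are genuinely scale-invariant and do not degenerate as $v_i$ collapses to its constant limit; the compactness of $N$ (giving $\|v_i\|_{L^\infty} \leq \mathrm{diam}(N)$) and the monotonicity formula ensure the hypotheses of the partial regularity theorem are met uniformly along the sequence.
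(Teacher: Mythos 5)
Your proposal is correct and follows essentially the same route as the paper: contradiction, compactness of the extensions $v_i$, identification of the limit as a constant (you do this via the structure of the boundary $m$-symmetric comparison maps $h_i$, the paper by passing the symmetry to the limit — these are equivalent), vanishing of the normalized energy, and then the partial regularity theorem to force $r_{u_i}(0)\ge\theta$ for large $i$. Your closing remark about the uniformity of the Lipschitz bounds is a fair observation — the paper's proof relies on the same implicit quantitative version of Theorem~\ref{thm: partial regularity} — but it does not change the argument.
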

	\begin{proof}
		Suppose, on the contrary, that there exist a sequence of minimizing intrinsic $s$-harmonic
		maps $u_{i}\in H_{\Lambda}^{s}(D_4,N)$ such that $u_{i}$ is $(m,1/i)$-symmetric on $D_2$ and  $r_{u_{i}}(0)<\theta$. By Theorem \ref{thm: compactness},
		we can assume that there exists a map $v$ such that         $v_i \to v$ in $\dot{W}_a^{1,2}\left(B_{2}^+ ; N\right)$, and $v_i\to v $ in $L^{\gamma_0}\left(B_{2}^+; N\right)$,  where $v_i \in \dot{W}_a^{1,2}\left(\mathbb{R}_{+}^{m+1} ; N\right)$ with $\left.T\right|_{D_4} v_i=u_i$ such that $I^a(u_i,D_4)=E^a(v_i)$.
		Letting $i\to \wq$ we find that $v$ is 0-homogeneous and translation invariant with respect to the subspace $\R^m\subset \R^{m+1}$ in $B_{2}^+$. This implies that $v\equiv \text{const}$ in $B_{2}^+$. But then, by the strong convergence we know that
		$$
		\boldsymbol{\Theta}_s(v_{i},\boldsymbol{0},2)\rightarrow 0\quad \text{as}\quad i\rightarrow \infty,
		$$
		which implies that $r_{u_{i}}(0)\ge\theta$  for $i \gg 1$ by Theorem \ref{thm: partial regularity}.		We reach a contradiction.
	\end{proof}

The second ingredient of the proof of Theorem \ref{thm: sym-to-reg} is the following symmetry self-improvement
	lemma.

	\begin{lemma}[Symmetry self-improvement]\label{lem: Symmetry self-improvement}
		When $m \geq 3$ let $s \in[1/2,1)$ and when $m=2$ let $s \in \left[1/2, 2/3\right)$. Given $\Lambda>0$ and assume $u\in H_{\Lambda}^{s}(D_4,N)$ is a minimizing intrinsic $s$-harmonic map. Then, for any $\ep>0$, there
		exists $\de>0$ such that if $u$ is $(m-1,\de)$-symmetric on $D_{2}$, then $u$ is also $(m,\ep)$-symmetric
		on $D_{2}$. \end{lemma}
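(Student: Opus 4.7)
The plan is to argue by contradiction. Suppose the statement fails for some $\ep_0 > 0$: there is a sequence of minimizing intrinsic $s$-harmonic maps $u_i \in H_\La^s(D_4, N)$ which is $(m-1, 1/i)$-symmetric on $D_2$ but not $(m, \ep_0)$-symmetric on $D_2$. Unwinding the definition, I obtain minimal extensions $v_i \in \dot W_a^{1,2}(\R_+^{m+1}; N)$ with $T|_{D_4} v_i = u_i$ and $E^a(v_i) = I^a(u_i, D_4) \leq \La$, together with boundary $(m-1)$-symmetric functions $h_i : \overline{\R_+^{m+1}} \to N$ (each invariant under some $(m-1)$-dimensional subspace $V_i \subset \R^m$) satisfying $\int_{B_2^+} |v_i - h_i|^{\ga_0} d\mathbf{x} \to 0$.

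Next I would pass to the limit. Since $\overline{\partial^0 B_2^+} = \overline{D_2} \subset D_4$, Theorem~\ref{thm: compactness} furnishes a subsequence with $v_i \to v$ strongly in $\dot W_a^{1,2}(B_2^+; N)$, and the compact embedding \eqref{eq:compact embedding} then gives $v_i \to v$ strongly in $L^{\ga_0}(B_2^+; \R^n)$. Passing to a further subsequence so that $V_i \to V$ in the Grassmannian of $(m-1)$-planes in $\R^m$, the $h_i$ also converge to $v$ in $L^{\ga_0}(B_2^+; \R^n)$, from which I conclude that $v$ is boundary $(m-1)$-symmetric: it is $0$-homogeneous and translation invariant along $V$. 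Weak lower semicontinuity further yields $\int_{B_2^+} z^a |\na v|^2 d\mathbf{x} \leq 2\La$.

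The core step is a dimension reduction. After a rotation I may assume $V = \R^{m-1} \times \{0\} \times \{0\}$, so $v$ depends only on $(x_m, z)$, and by $0$-homogeneity $v(x_m, z) = \psi(\theta)$ with $(x_m, z) = \rho(\cos\theta, \sin\theta)$ and $\theta \in [0, \pi]$. Integrating out the invariant $\R^{m-1}$-directions and then switching to polar coordinates in the $(x_m, z)$-plane factors the energy:
\[
\int_{B_2^+} z^a |\na v|^2 d\mathbf{x} = \om_{m-1} \left(\int_0^\pi (\sin\theta)^a |\psi'(\theta)|^2 d\theta\right)\left(\int_0^2 \rho^{a-1}(4-\rho^2)^{(m-1)/2} d\rho\right).
\]
Since $s \in [1/2, 1)$ forces $a = 1-2s \leq 0$, the radial integral diverges at the origin; combined with the bound $\leq 2\La$, this forces the angular integral to vanish, whence $\psi' \equiv 0$ a.e.\ and $v$ is constant a.e.\ on $B_2^+$ with some value $c \in N$.

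To conclude, the constant map $h \equiv c$ is boundary $m$-symmetric, and the strong $L^{\ga_0}(B_2^+)$ convergence gives $\int_{B_2^+} |v_i - c|^{\ga_0} d\mathbf{x} \to 0$. Hence for large $i$ the map $u_i$ is $(m, \ep_0)$-symmetric on $D_2$, contradicting the standing assumption. The hard step (and the reason the lemma is restricted to $s \geq 1/2$) is precisely the dimension-reduction computation: the radial weight $\rho^{a-1}$ fails to be integrable at the origin exactly when $a \leq 0$, whereas for $s < 1/2$ one has $a > 0$ and non-constant boundary $(m-1)$-symmetric maps of finite energy exist, blocking this self-improvement argument.
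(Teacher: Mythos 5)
Your proposal is correct and follows the same overall skeleton as the paper (contradiction, extract minimal extensions $v_i$, apply the compactness theorem to get strong $\dot W_a^{1,2}$ and $L^{\gamma_0}$ convergence to a boundary $(m-1)$-symmetric limit $v$, show $v$ is constant, and contradict the failure of $(m,\ep_0)$-symmetry via the strong $L^{\gamma_0}$ convergence). The one genuinely different ingredient is the rigidity step showing that a boundary $(m-1)$-symmetric map of finite weighted energy must be constant when $a=1-2s\le 0$. The paper proves $\int_{B_2^+}z^a|\nabla v|^2=0$ by a packing argument: if some half-ball near $\partial B_1$ carried energy $c>0$, then $0$-homogeneity gives energy $c\,4^{-j(m-1+a)}$ on each dyadic rescaling and translation invariance along the $(m-1)$-plane supplies $\gtrsim 4^{j(m-1)}$ disjoint copies at scale $4^{-j}$, so the total energy is $\gtrsim\sum_j 4^{-ja}=\infty$ for $a\le 0$; it then uses a Poincar\'e inequality to conclude $v$ is constant. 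You instead reduce to a function $\psi(\theta)$ of the angle in the $(x_m,z)$ half-plane and factor the energy as $\omega_{m-1}\bigl(\int_0^\pi(\sin\theta)^a|\psi'|^2\,d\theta\bigr)\bigl(\int_0^2\rho^{a-1}(4-\rho^2)^{(m-1)/2}\,d\rho\bigr)$, with the radial factor divergent precisely when $a\le 0$, forcing $\psi'\equiv 0$ directly; your computation is correct (I checked the cylindrical-coordinate factorization) and has the advantage of making completely explicit where the restriction $s\ge 1/2$ enters and why the argument is sharp there, while the paper's counting argument is somewhat more robust in that it never invokes a chain rule/coordinate representation of the weak gradient. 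The only point you should spell out if writing this up is the routine slicing justification that $v\in W_a^{1,2}(B_2^+)$ with $v=\psi(\theta)$ a.e.\ implies $\psi$ lies in the corresponding weighted Sobolev space on $(0,\pi)$ and the Fubini-type factorization of the energy is legitimate; this parallels the slicing lemma already used in Section 3 and is not a gap.
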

	\begin{proof}
We proceed by contradiction. Note that $a=1-2s\leq0$. Suppose, for some $\ep_{0}>0$ and for each $i\ge1$, there is a sequence of minimizing intrinsic $s$-harmonic maps $u_{i}\in H_{\Lambda}^{s}(D_4,N)$ such that $u_{i}$ is  $(m-1,1/i)$-symmetric but not $(m,\ep_{0})$-symmetric on $D_2$. By Theorem \ref{thm: compactness}, we can assume that there exists a map $v$ such that $v_i \to v$ in $\dot{W}_a^{1,2}\left(B_{2}^+ ; N\right)$, 
$v_i\to v $ in $L^{\gamma_0}\left(B_{2}^+; N\right)$,  where $v_i \in \dot{W}_a^{1,2}\left(\mathbb{R}_{+}^{m+1} ; N\right)$ with $\left.T\right|_{D_4} v_i=u_i$ such that $I^a(u_i,D_4)=E^a(v_i)$. Then $u=\left.T\right|_{D_4}v$ is $(m-1)$-symmetric but not $\left(m, \epsilon_0\right)$-symmetric on $D_2$. We first establish that
$$
\int_{B_2^{+}}z^{a}\left|\nabla v\right|^2=0.
$$
Suppose not. Then by radial invariance, for some $c>0$ and $\boldsymbol{x}=(x, z) \in \overline{\mathbb{R}_{+}^{m+1}} \cap \partial B_1$, we have
\begin{equation}\label{eq 4.1}
\int_{B_{1 / 2}(\boldsymbol{x}) \cap B_2^{+}}z^{a}\left|\nabla v\right|^2=c.
\end{equation}
By radial invariance, we obtain
\begin{equation}\label{eq 4.2}
\int_{B_{1 / (2\cdot4^{j})}\left(\boldsymbol{x} / 4^j\right) \cap B_2^{+}}z^{a}\left|\nabla v\right|^2=c\left(4^{-j}\right)^{m-1+a},
\end{equation}
for all $j \in \mathbb{N}$. Notice that the sets $B_{4^{-j} / 2}\left(\boldsymbol{x} / 4^j\right) \cap B_2^{+}$are mutually disjoint.		
Since $v$ is boundary $(m-1)$-symmetric on $B_2^{+}$, there exists $c_1>0$ such that for each $j \in \mathbb{N}$ there exists a collection of at least $c_1\left(4^j\right)^{m-1}$ mutually disjoint sets in $ B_2^{+}$, each of radius $ 1/ 2\cdot 4^{j}$ and such that on each of them, the equality \eqref{eq 4.2} holds. Moreover, when $j_1 \neq j_2$, the collections are obviously disjoint. Therefore,
$$
\int_{B_2^{+}}z^{a}\left|\nabla v\right|^2 \geq \sum_{j=1}^{\infty} c\left(4^{-j}\right)^{m-1+a} c_1\left(4^j\right)^{m-1}=\infty.
$$
But this contradicts the $E^a(v_i)<\Lambda$ for each $i\geq1$ and the strong convergence $v_i \rightarrow v$ in $\dot{W}_a^{1,2}\left(B_2^{+} ; N\right)$. Therefore, $\int_{B_2^{+}}z^{a}\left|\nabla v\right|^2=0$ and then $\int_{B_2^{+}}\left|\nabla v\right|^2=0$. Now, by the Poincar\'{e} inequality,

$$
\int_{B_2^{+}}\left|v-\left(v\right)_2\right|^2 \leq C \int_{B_2^{+}}\left|\nabla v\right|^2=0,
$$
where $\left(v\right)_2$ denotes the average of $v$ over $B_2^{+}$. Thus $v$ is boundary $m$-symmetric on $B_2^{+}$ and $u$ is $m$-symmetric in $D_2$, which gives a contradiction.

	\end{proof}

	Now we can prove Theorem \ref{thm: sym-to-reg}.

	\begin{proof}[Proof of Theorem \ref{thm: sym-to-reg}] It follows from Lemma  \ref{lemma: new epsilon regularity} and  Lemma \ref{lem: Symmetry self-improvement} directly. \end{proof}
	
	Now we are ready to prove Theorems \ref{thm: integrability} and \ref{thm: regularity scale estimate}.
	
	\begin{proof}[Proof of Theorem \ref{thm: regularity scale estimate}]
We first consider the case $s \in[1/2,1), m\geqslant 3$ or $s \in[1/2,2/3), m=2$. If $x\in \mathcal{B}_{ r}(u)$, then by Theorem \ref{thm: sym-to-reg}, $u$ is not $(m-1,\delta)$-symmetric on $D_{2r/\theta}^+(x)$. In other words, $x\in \mathcal{S}_{\eta, 2r/\theta}^{m-2}$ for any $0<\eta\leq \de(m,\La,s,N)$, the constant defined as in  Theorem \ref{thm: sym-to-reg}. Therefore, we have
		\[
		\mathcal{B}_r(u)\subset \mathcal{S}_{\eta, 2r/\theta}^{m-2}, \qquad \forall\, 0<\eta\leq \de(m,\La,s,N).
		\]
		Then Theorem \ref{volume estimate} yields
		\[
		\operatorname{Vol}\left(T_r\left(\mathcal{B}_r(u)\right) \cap D_1(x)\right) \leq \operatorname{Vol}\left(T_r\left(\mathcal{S}_{\eta, 2 r/\theta}^{m-2}\right) \cap D_1(x)\right) \leq C(m,s,  \Lambda,N, \eta) r^{2-\eta}.
		\]		
For the case $s \in(0,1/2)$, the proof is similar. By Lemma \ref{lemma: new epsilon regularity}, we have
\[
		\mathcal{B}_r(u)\subset \mathcal{S}_{\eta, 2r/\theta}^{m-1}, \qquad \forall\, 0<\eta\leq \delta(m,\La,s,N).
		\]
Then
\[
		\operatorname{Vol}\left(T_r\left(\mathcal{B}_r(u)\right) \cap D_1(x)\right) \leq \operatorname{Vol}\left(T_r\left(\mathcal{S}_{\eta, 2 r/\theta}^{m-1}\right) \cap D_1(x)\right) \leq C(m,s,  \Lambda, N,\eta) r^{1-\eta}.
		\]	
		The proof is complete.
	\end{proof}

	\begin{proof}[Proof of Theorem \ref{thm: integrability}]  This theorem follows from the simple observation that
		\[\{x\in D_1: |\na u(x)|>1/r \} \subset \{x\in D_1: r_u(x)<r \}  \]
		and the volume estimate of  Theorem \ref{thm: regularity scale estimate}. 		
	\end{proof}
	
\vskip 0.3cm

\textbf{Funding} This work is partially supported by the NSFC (Nos. 12271195, 12271296), the NSF of Hubei Province (No. 2024AFA061) and
the Open Research Fund of Hubei Key Laboratory of Mathematical Sciences (No.MPL2025ORG018), Central China	Normal University, P. R. China.

\medskip

\textbf{Data Availability Statement} This manuscript has no associated data.
\medskip

\textbf{\Large Declarations}
\medskip

{\bf Conflict of Interest}\quad The authors declare no
conflict of interest.

\end{document}